\documentclass{amsart}

\usepackage{graphicx} 
\usepackage{amsmath}
\usepackage{amsthm}
\usepackage{amsfonts}
\usepackage{mathrsfs}
\usepackage{dsfont}
\usepackage{color}
\usepackage{enumitem}
\usepackage{tikz-cd}
\usepackage[margin=42mm]{geometry}

\newcommand{\naturals}{\mathbb{N}}

\newcommand{\angles}[1]{\left\langle #1 \right\rangle}

\newcommand{\para}[1]{\left(#1\right)}
\newcommand{\paraa}[1]{\big(#1\big)}

\newcommand{\sgn}{\operatorname{sgn}}
\newcommand{\im}{\operatorname{im}}
\newcommand{\Hom}{\operatorname{Hom}}

\newcommand{\spacearound}[1]{\quad#1\quad}
\newcommand{\equivalent}{\spacearound{\Leftrightarrow}}
\renewcommand{\implies}{\spacearound{\Rightarrow}}

\newtheorem{theorem}{Theorem}[section]
\newtheorem{corollary}[theorem]{Corollary}
\newtheorem{lemma}[theorem]{Lemma}
\newtheorem{proposition}[theorem]{Proposition}
\newtheorem{example}[theorem]{Example}
\theoremstyle{definition}
\newtheorem{definition}[theorem]{Definition}
\theoremstyle{remark}
\newtheorem{remark}[theorem]{Remark}
\numberwithin{equation}{section}

\newcommand{\A}{\mathcal{A}}
\newcommand{\K}{\mathbb{K}}

\renewcommand{\mid}{\mathds{1}} 
\renewcommand{\d}{\partial}

\newcommand{\Der}{\operatorname{Der}}
\newcommand{\qand}{\quad\text{and}\quad}

\newcommand{\g}{\mathfrak{g}}

\renewcommand{\dh}{\hat{\partial}}
\newcommand{\Omegag}[1]{\Omega^{#1}_{\g}}
\newcommand{\Omegabg}[1]{\bar{\Omega}^{#1}_{\g}}

\newcommand{\Omegaoneg}{\Omega^1_{\g}}

\newcommand{\OmegaoneD}{\Omega^1_{\D_2}}
\renewcommand{\emph}[1]{\textit{#1}}

\newcommand{\otimesK}{\otimes_{\K}}

\newcommand{\nablaz}{\nabla^0}

\newcommand{\ZA}{Z(\A)}

\renewcommand{\P}{\mathcal{P}}
\newcommand{\Pp}{\P_{+}}
\newcommand{\KQ}{\K Q}
\newcommand{\KQop}{\KQ^{\textrm{op}}}

\newcommand{\D}{\mathfrak{D}}

\newcommand{\Qp}[1]{Q_{||#1}}
\newcommand{\Qpalpha}{\Qp{\alpha}}

\newcommand{\Ppar}[1]{\P_{||#1}}
\newcommand{\Ppara}{\Ppar{\alpha}}

\newcommand{\gh}{\hat{g}}

\newcommand{\mycomment}[1]{}

\title[Derivation based noncommutative geometry of quiver path algebras]{Derivation based noncommutative geometry\\ of quiver path algebras}
\date{}

\author{Joakim Arnlind}
\address[Joakim Arnlind]{Dept. of Math.\\
Link\"oping University\\
581 83 Link\"oping\\
Sweden}
\email{joakim.arnlind@liu.se}

\author{Victor Hildebrandsson}
\address[Victor Hildebrandsson]{Dept. of Math.\\
Link\"oping University\\
581 83 Link\"oping\\
Sweden}
\email{victor.hildebrandsson@liu.se}

\begin{document}

\begin{abstract}
    In this paper, we study derivation based noncommutative geometry of path algebras arising from quivers by identifying a Lie algebra of derivations from which a differential calculus is constructed. We show that properties of the differential calculus are intimately connected with the structure of the quiver. In particular, by introducing the notion of a separating quiver, we find a condition on the quiver which is equivalent to the module of differential forms being a projective bimodule. For such quivers, one can construct bimodule connections, as well as bimodule forms which serve as noncommutative analogues of the Riemannian metric. As an illustration of the above concepts, we present a class of quivers for which one can construct Levi-Civita connections, i.e. torsion free connections compatible with a bimodule form.
\end{abstract}

\maketitle

\tableofcontents

\section{Introduction}

\noindent
Path algebras of quivers provide a rich class of noncommutative algebras, where the algebraic structure is closely tied to the properties of the underlying quiver. Such algebras are, in a sense, highly noncommutative and often have a large radical. They play a central role in the representation theory of finite-dimensional algebras, since a finite-dimensional algebra is Morita equivalent to the path algebra of a quiver with relations (see e.g \cite{ass:rep.theory.associative.algebras}). Moreover, even in the case when the path algebra admits a $\ast$-structure, it is typically not possible to embed it into a $C^\ast$-algebra, obstructing the use of $C^\ast$-algebraic methods to study their noncommutative geometry. In this paper we attempt to develop derivation based noncommutative geometry of path algebras. 

Noncommutative geometry related to graph structures have been studied in several inherently different contexts. For instance, path algebras have appeared in noncommutative algebraic geometry as important examples of formally smooth algebras; however, we point out that this approach to noncommutative geometry is quite different from the one in the current paper, as one is interested in concepts that map to their traditional geometric counterparts via the representation functor, e.g. leading to the definition of double derivations (cf. \cite{blb:necklace,cbeg:noncommutative.geometry.quiver}).  

Furthermore, graphs have been used to encode noncommutative differential geometry of the algebra of functions on the set of vertices of a graph, where the edge structure encodes the choice of differential calculus (cf. \cite{dmh:discrete.calculus.graphs,m:ncg.graphs}). 
Yet another example of noncommutative geometry on graphs comes from graph $C^\ast$-algebras, which is developed with spectral triples as a starting point (see e.g. \cite{pr:ncg.graph.Cstar}). Apart from having an analytic structure, graph $C^\ast$-algebras are closely related to Leavitt path algebras, which are constructed from path algebras by adding so called ghost edges together with a set of additional relations (cf. \cite{aasm:leavitt.path.algebras} for a comprehensive exposition). It would be interesting to study to what extent the methods developed in the current paper extend to Leavitt path algebras. 

In the following, we develop noncommutative differential calculus on the path algebra of a quiver in the context of derivation based calculus by identifying a preferred Lie algebra of (mostly) outer derivations from which a differential graded algebra is constructed (cf. \cite{dv:calculDifferentiel,a:lc.kronecker,ah:nc.levi.civita}).  
Of crucial importance is the introduction of \emph{separating quivers} which is a class of quivers where each edge is part of either zero or at least two distinct cycles. This class obviously includes all acyclic quivers, but also contains many quivers with cycles for which the corresponding path algebra is infinite dimensional. We show that separating quivers are precisely those for which the bimodule of differential forms is projective (cf. Corollary~\ref{cor:Q.separating.equiv.projective}). For such quivers we study the set of bimodules connections and via the introduction of bimodule forms (as analogues of metrics) we develop Levi-Civita bimodule connections. 

Along the way, we find more instances where the underlying structure of the quiver interacts in a non-trivial way with the geometry, for instance in the structure of endomorphisms of differential forms and in the dimension of the differential calculus.

\section{Preliminaries}

\subsection{Derivation based calculi}\label{sec:derivation.based.calculi}

\noindent
Let us start by briefly recalling the construction of noncommutative differential forms in a derivation based calculus. To this end, let $\A$ be a unital associative algebra over a field $\K$ of characteristic zero, let $\ZA$ denote the center of $\A$ and let $\Der(\A)$ denote the set of derivations of $\A$. One introduces noncommutative differential $k$-forms as follows (cf. \cite{dv:calculDifferentiel}).

\begin{definition}
    Let $\A$ be a $\K$-algebra and let $\g\subseteq\Der(\A)$ be a Lie algebra over $\K$. Define $\Omegabg{k}$ be the set of alternating $\K$-multilinear maps $\omega:\g^k\to\A$, i.e.
    \begin{align*}
        &\omega(\d_1,\ldots,\d_k) = \sgn(\sigma)\omega(\d_{\sigma(1)},\ldots,\d_{\sigma(k)})\\
        &\omega(\lambda\d_1,\d_2,\ldots,\d_k) = \lambda\omega(\d_1,\d_2,\ldots,\d_k)
    \end{align*}
    for $\sigma\in S_k$, $\lambda\in\K$ and $\{\d_1,\ldots,\d_k\}\subseteq\g$ (where $S_k$ denotes the symmetric group of order $k$). 
    Furthermore, we set
    \begin{align*}
        \Omegabg{} = \bigoplus_{k\geq 0}\Omegabg{k},      
    \end{align*}
    with $\Omegabg{0}=\A$.
\end{definition}

\noindent
Note that the dimension of the Lie algebra $\g$ bounds the maximum order of nonzero differential forms, due to the antisymmetry property. Namely, if $\dim(\g)=n$ then $\Omegabg{k}=0$ for $k>n$. In general, we define the dimension of $\Omegabg{}$ to be the largest positive integer $n$ such that $\Omegabg{n}\neq 0$ and $\Omegabg{k}=0$ for all $k>n$.

For $\omega\in\Omegabg{k}$ and
$\tau\in\Omegabg{l}$ one defines $\omega\tau\in\Omegabg{k+l}$ as
\begin{align*}
  (\omega\tau)(\d_1,\ldots,\d_{k+l}) =
  \frac{1}{k!l!}\sum_{\sigma\in S_{k+l}}\sgn(\sigma)\omega(\d_{\sigma(1)},\ldots,\d_{\sigma(k)})
  \tau(\d_{\sigma(k+1)},\ldots,\d_{\sigma(k+l)}),
\end{align*}
and introduces
$d_k:\Omegabg{k}\to\Omegabg{k+1}$ as $(d_0a)(\d_0)=\d_0(a)$ for
$a\in\Omegabg{0}=\A$ and
\begin{align*}
  d_k\omega(\d_0,\ldots,\d_{k}) =
  &\sum_{i=0}^k(-1)^i\d_i\paraa{\omega(\d_0,\ldots,\hat{\d}_{i},\ldots,\d_k)}\\
  &\qquad+\sum_{0\leq i<j\leq k}(-1)^{i+j}\omega\paraa{[\d_i,\d_j],\d_0,\ldots,\hat{\d}_i,\ldots,\hat{\d}_j,\ldots,\d_k},
\end{align*}
for $\omega\in\Omegabg{k}$ with $k\geq 1$, satisfying $d_{k+1}d_{k}=0$, where $\hat{\d}_i$ denotes the omission
of $\d_i$ in the argument. When there is no risk of confusion, we
shall omit the index $k$ and simply write
$d:\Omegabg{k}\to\Omegabg{k+1}$. For instance, if
$\omega\in\Omegabg{1}$ then
\begin{align*}
  d\omega(\d_0,\d_1) = \d_0\omega(\d_1)-\d_1\omega(\d_0)-\omega([\d_0,\d_1])
\end{align*}
and, in general, we say that $\omega\in\Omegabg{k}$ is closed if $d\omega=0$, and exact if there exists $\eta\in\Omegabg{k-1}$ such that $\omega=d\eta$. 
Moreover, we note that the graded product rule is satisfied
\begin{align*}
  d(\omega\eta) = (d\omega)\eta + (-1)^k\omega d\eta 
\end{align*}
for $\omega\in\Omegabg{k}$ and $\eta\in\Omegabg{l}$. The above definitions make $\Omegabg{}$ into a differential graded algebra with $\deg(\omega)=k$ if $\omega\in\Omegabg{k}$.
The $\A$-bimodule structure of $\Omegabg{k}$ is given by
\begin{align*}
  &(a\omega)(\d_1,\ldots,\d_k) = a\omega(\d_1,\ldots,\d_k)\\
  &(\omega a)(\d_1,\ldots,\d_k) = \omega(\d_1,\ldots,\d_k)a
\end{align*}
for $a\in\A$, $\omega\in\Omegabg{k}$.
Furthermore, for $\d\in\g$ one introduces
\begin{align*}
    &\iota_{\d}:\Omegabg{k+1}\to\Omegabg{k}\\
    &\iota_{\d}(\omega)(\d_1,\ldots,\d_k) = \omega(\d,\d_1,\ldots,\d_k)\quad(k\geq 1)\\
    &\iota(a) = 0 \quad\text{for}\quad a\in\Omegabg{0}=\A
\end{align*}
and
\begin{align*}
    &L_{\d}:\Omegabg{k}\to\Omegabg{k}\qquad
    L_{\d} = d\circ\iota_{\d}+\iota_{\d}\circ d 
\end{align*}
satisfying 
\begin{align*}
    &\iota_{\d}(\omega\eta) = \iota_{\d}(\omega)\eta + (-1)^k\omega\,\iota_{\d}(\eta)\\
    &L_{\d}(\omega\eta) = L_{\d}(\omega)\eta + \omega L_{\d}(\eta)
\end{align*}
for $\omega\in\Omegabg{k}$ and $\eta\in\Omegabg{l}$.

In the following, we shall primarily be interested in a subalgebra $\Omegag{}\subseteq\Omegabg{}$, generated by the algebra $\A$; that is, one sets $\Omegag{0}=\A$ and
\begin{align*}
   \Omegag{k} = \{a_0da_1da_2\cdots da_k: a_i\in\A\text{ for }i=0,\ldots,k\}
\end{align*}
for $k\geq 1$. It is straightforward to check that $\Omegag{}=\bigoplus_{k\geq 0}\Omegag{k}$ is a differential graded subalgebra of $\Omegabg{}$. The differential graded algebra $\Omegag{}$ more closely resembles the algebra of differential forms on a manifold, where the differential forms are generated by the algebra of smooth functions.
Moreover, $(\Omega^1,d)$ is a first order differential calculus since, by definition, $\Omega^1$ is generated by $\A$ (see e.g. \cite{w:diff.calc.matrix.pseudo}).

Let us now recall the theory of connections and bimodule forms with resect to $\Omegaoneg$.

\begin{definition}\label{def_connection}
    Let $\A$ be a $\K$-algebra and let $\g\subseteq\Der(\A)$ be a Lie algebra over $\K$. A left (right) $\g$-connection on a left (right) $\A$-module $M$ is a map $\nabla:M\times\g\to M$ such that
    \begin{enumerate}
        \item
        $\nabla_{\d}(m+m')=\nabla_{\d} m
        +\nabla_{\d} m'$,
        
        \item
        $\nabla_{\lambda\d+\lambda'\d'}m
        =\lambda\nabla_\d m+\lambda'\nabla_{\d'}m$,
        
        \item
        $\nabla_\d(am)=a(\nabla_\d m)+(\d a)m$,
        \qquad $\paraa{\nabla_{\d}(ma)=(\nabla_d m)a+m(\d a)}$
    \end{enumerate}
    for $m,m'\in M$, $\d,\d'\in\mathfrak{g}$, $\lambda,\lambda'\in\K$ and $a\in\mathcal{A}$. Furthermore, a bimodule $\g$-connection on a $\A$-bimodule $M$ is a left $\g$-connection on $M$ which is also a right $\g$-connection.
\end{definition}

\noindent
In differential geometry, Riemannian metrics represent inner products of sections of the tangent bundle (and, dually, on sections of the cotangent bundle). Bimodule forms play the role of a metric structure on bimodules over the algebra. 

\begin{definition}
    Let $\A$ be a $\K$-algebra and let $M$ be an $\A$-bimodule. A bimodule form $g$ on $M$ is a bimodule homomorphism
    \begin{align*}
        g:M\otimesK M\to \A.
    \end{align*}
    Moreover, a strong bimodule form $g$ on $M$ is a bimodule homomorphism
    \begin{align*}
        g: M\otimes_{\A} M\to \A.
    \end{align*}
\end{definition}

\noindent
For notational convenience, we shall often write $g(a\otimesK b)=g(a,b)$.

As in Riemannian geometry, connections can be compatible with the metric in the sense that it preserves inner products of parallel transported vectors. In the noncommutative setting, it is straight forward to introduce an analogous concept. 

\begin{definition}
    Let $\A$ be a $\K$-algebra and let $M$ be an $\A$-bimodule with a bimodule $\g$-connection $\nabla$. Given a bimodule form $g$ on $M$, the connection $\nabla$ is compatible with $g$ if
    \begin{align}
        \d g(m_1,m_2) = g(\nabla_{\d} m_1,m_2) + g(m_1,\nabla_{\d}m_2)
    \end{align}
    for $m_1,m_2\in M$ and $\d\in\g$.
\end{definition}

\noindent 
By definition, a Riemannian metric is invertible in the sense that it provides a bijection between vector fields and differential forms. In noncommutative geometry, this is in general a quite restrictive condition and it is useful to have more relaxed notions of nondegeneracy for bimodule forms.

In the following, for an $\A$-bimodule $M$, we let ${}_{\A}M$ and $M_{\A}$ denote $M$ considered as a left and right $\A$-module, respectively.

\begin{definition}
    Let $\A$ be a $\K$-algebra and let $M$ be an $\A$-bimodule. Given a bimodule form $g$ on $M$ one defines
    \begin{align*}
        &\gh_L:M\to ({}_{\A}M)^\ast = \Hom_{\A}({}_{\A}M,\A)\qquad \gh_L(m)(n) = g(n,m)\\
        &\gh_R:M\to (M_{\A})^\ast = \Hom_{\A}(M_{\A},\A)\qquad \gh_R(m)(n) = g(m,n).
    \end{align*}
    Furthermore, $g$ is called left (right) nondegenerate if $\gh_L$ ($\gh_R$) is injective and $g$ is called left (right) invertible if $\gh_L$ ($\gh_R$) is a bijection.
\end{definition}

\noindent Recall that a module $M$ is reflexive if the canonical map $\iota: M\to M^{\ast\ast}$ defined by
\begin{align*}
    \iota(m)(\phi) = \phi(m)\qquad (m\in M,\phi\in M^{\ast})
\end{align*}
is bijective. For bimodules that are reflexive as both left and right modules, left and right invertibility is equivalent, and we simply say that the bimodule form is invertible.

\begin{proposition}
    Let $\A$ be a $\K$-algebra, let $M$ be an $\A$-bimodule and let $g$ be a bimodule form on $M$. If $M$ is reflexive as both a left and right module then
    \begin{align*}
        \gh_L\text{ bijective}\equivalent \gh_R\text{ bijective}.
    \end{align*}
\end{proposition}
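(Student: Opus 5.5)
The idea is to relate $\gh_R$ to the dual of $\gh_L$ through the canonical maps $\iota$. First the module structures. Since $g$ is a bimodule homomorphism, $\gh_L(m)$ is left $\A$-linear, because $g(an,m)=ag(n,m)$, so $\gh_L(m)\in({}_{\A}M)^\ast$. Similarly $\gh_R(m)\in(M_{\A})^\ast$. Moreover $\gh_L$ is right $\A$-linear, $\gh_L(ma)=\gh_L(m)a$, where $({}_{\A}M)^\ast$ carries the right module structure $(\phi a)(n)=\phi(n)a$. Likewise $\gh_R$ is left $\A$-linear into $(M_{\A})^\ast$ with $(a\psi)(n)=a\psi(n)$. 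Thus $\gh_L$ is a morphism of right modules $M_{\A}\to({}_{\A}M)^\ast$, and $\gh_R$ is a morphism of left modules ${}_{\A}M\to(M_{\A})^\ast$.

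Now dualize. Let $\gh_L^\ast:(({}_{\A}M)^\ast)^\ast\to(M_{\A})^\ast$ be the dual of $\gh_L$ as a right-module map, $\Phi\mapsto\Phi\circ\gh_L$. Here the outer dual is taken in right modules, so it lands in $(M_{\A})^\ast$. Let $\iota_L:{}_{\A}M\to(({}_{\A}M)^\ast)^\ast$ be the canonical map, $\iota_L(n)(\phi)=\phi(n)$, which is a left-module map and is bijective by hypothesis. The key identity is
\begin{align*}
  \gh_R = \gh_L^\ast\circ\iota_L.
\end{align*}
To check it, evaluate at $n,m\in M$:
\begin{align*}
  (\gh_L^\ast\iota_L(n))(m)=\iota_L(n)(\gh_L(m))=\gh_L(m)(n)=g(n,m)=\gh_R(n)(m).
\end{align*}
By the symmetric argument, $\gh_L=\gh_R^\ast\circ\iota_R$, where $\iota_R:M_{\A}\to((M_{\A})^\ast)^\ast$ is bijective by hypothesis.

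Now suppose $\gh_L$ is bijective. Then its dual $\gh_L^\ast$ is bijective, since duality is a contravariant functor and the dual of the inverse is the inverse of the dual. As $\iota_L$ is also bijective, $\gh_R=\gh_L^\ast\circ\iota_L$ is bijective. The converse follows in the same way from $\gh_L=\gh_R^\ast\circ\iota_R$.

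The main point requiring care is the bookkeeping of sides. We must confirm that each dual carries the correct module structure, so that $\gh_L^\ast$ lands exactly in $(M_{\A})^\ast$ rather than some other dual. We also need $\iota_L$ to be well defined, that is, $\iota_L(n)$ must be right $\A$-linear on $({}_{\A}M)^\ast$; this holds because $\iota_L(n)(\phi a)=\phi(n)a$. Once these identifications are verified, the argument is purely formal.
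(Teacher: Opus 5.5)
Your proof is correct and is the paper's argument: you prove $\gh_R=\gh_L^\ast\circ\iota_L$, then use that $\iota_L$ is bijective (reflexivity of ${}_{\A}M$) and that the dual of a bijection is a bijection, and argue symmetrically for the converse. Your tracking of left and right structures is more careful than the paper's, which writes the codomain of $\gh_L^\ast$ as $({}_{\A}M)^\ast$ where it should be $(M_{\A})^\ast$, as you have it.
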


\begin{proof}
    Assume that $\gh_L$ is bijective, implying that the dual map
    \begin{align*}
        \gh_L^\ast : &({}_{\A}M)^{\ast\ast} \to ({}_{\A}M)^{\ast}\\
        f &\mapsto \gh_L^\ast(f)(m) = f(\gh_L(m))
    \end{align*}
    is bijective as well. Now, let $\iota:{}_{\A}M\to ({}_{\A}M)^{\ast\ast}$ denote the canonical map and note that $\gh_L^\ast\circ\iota:{}_{\A}M\to({}_{\A}M)^\ast$ becomes
    \begin{align*}
        \paraa{(\gh_L^\ast\circ\iota)(m)}(n)
        &= \iota(m)(\gh_L(n)) = \paraa{\gh_L(n)}(m)
        =g(m,n) = \paraa{\gh_R(m)}(n).
    \end{align*}
    In other words, $\gh_R = \gh_L^\ast\circ\iota$. Since ${}_{\A}M$ is assumed to be reflexive as a left module it follows that $\iota$, and consequently, $\gh_R$ is bijective. In the same way, when considering $M$ as a right module, one shows that if $\gh_R$ is bijective then $\gh_L$ is bijective. 
\end{proof}

\noindent
Note that since finitely generated projective modules are reflexive the above result applies to finitely generated projective bimodules. 

In the above results, we have considered connections and bimodule forms on arbitrary modules, corresponding to the geometric situation of vector bundles (in case the module is a finitely generated projective module). In this paper, we shall mostly be interested in connections on the bimodule $\Omegaoneg$, for which there is a natural concept of torsion in analogy with differential geometry.

\begin{definition}
    Let $\nabla$ be a left (right) $\g$-connection on $\Omegaoneg$. The torsion $T(\nabla):\Omegaoneg\to\Omegabg{2}$ is defined as
    \begin{align}\label{eq:torsion.def}
        \paraa{T(\nabla)\omega}(\d_1,\d_2) =
        (\nabla_{\d_1}\omega)(\d_2)-(\nabla_{\d_2}\omega)(\d_1)-d\omega(\d_1,\d_2) 
    \end{align}
    for $\omega\in\Omegaoneg$ and $\d_1,\d_2\in\g$. Moreover, a connection is called torsion free if $T(\nabla)=0$. 
\end{definition}

\begin{remark}
Although it will not be important in what follows, we point out that $T(\nabla)\omega$ is an element of $\Omegabg{2}$ but not necessarily an element of $\Omega^2_{\g}$. Connections for which $T(\nabla)\omega\in\Omegag{2}$ are called regular, and it follows that torsion free connections are regular (see \cite{ah:nc.levi.civita} for a detailed discussion of regular connections).    
\end{remark}

\noindent
Let $\nabla,\nabla'$ be bimodule $\g$-connections on $\Omegaoneg$. It is a standard fact that 
\begin{align*}
    A(\omega,\d) = \nabla_{\d}\omega-\nabla'_{\d}\omega
\end{align*}
for $\omega\in\Omegaoneg$ and $\d\in\g$ is a an element of $\Hom_{\A,\K}(\Omegaoneg\times\g,\Omegaoneg)$, i.e. a bimodule homomorphism in the first argument and a vector space map in the second. Conversely, given a bimodule $\g$-connection $\nabla^0$ on $\Omegaoneg$, any connection $\nabla$ can be written as
\begin{align*}
    \nabla = \nabla^0 + A
\end{align*}
for some $A\in\Hom_{\A,\K}(\Omegaoneg\times\g,\Omegaoneg)$. Moreover, for any $L\in\Hom_{\K,\K}(\Omegaoneg\times\g,\Omegaoneg)$ one introduces $\wedge L:\Omegaoneg\to\Omegabg{2}$
\begin{align*}
    \wedge L(\omega)(\d_1,\d_2) 
    = L(\omega,\d_1)(\d_2)-L(\omega,\d_2)(\d_1)
\end{align*}
implying that the definition of torsion in \eqref{eq:torsion.def} can be written as
\begin{align*}
    T(\nabla)(\omega) = \wedge\nabla(\omega) - d\omega. 
\end{align*}
Correspondingly, if $\nabla$ is a torsion free connection and 
$A\in\Hom_{\A,\K}(\Omegaoneg\times\g,\Omegaoneg)$ then $\nabla+A$ is torsion free if and only if $\wedge A=0$.

\subsection{Quivers and path algebras}

In the remaining part of the paper, we will be interested in noncommutative geometry over path algebras of quivers. Therefore, let us recall a few basic properties and introduce the relevant notation that will be used in the following.  

\begin{definition}
    A quiver $Q$ is a tuple $Q=(Q_0,Q_1,s,t)$ where $Q_0,Q_1$ are sets and $s,t$ are maps $Q_1\to Q_0$. Elements of $Q_0$ are called vertices and elements of $Q_1$ are called edges (or arrows). For $\alpha\in Q_1$, $s(\alpha)$ is called its source and $t(\alpha)$ is called its target. For brevity, we shall often write $Q=(Q_0,Q_1)$.
\end{definition}

\noindent
A quiver $Q=(Q_0,Q_1)$ is called finite if $|Q_0|<\infty$ and $|Q_1|<\infty$. In the following, we assume all quivers to be finite. 

Let $Q=(Q_0,Q_1)$ be a quiver. A path $p$ in $Q$ is a sequence of edges $p=\alpha_1\alpha_2\cdots\alpha_n$ with $\alpha_k\in Q_1$ such that $t(\alpha_k)=s(\alpha_{k+1})$ for $k=1,\ldots,n-1$, and we let $l(p)=n$ denote the length of the path. By convention, a vertex $v\in Q_0$ is a path of length zero and we let $\P(Q)$ denote the set of all paths. For $\alpha\in Q_1$ and $p\in\P(Q)$ we write $\alpha\in p$ if there exists $p_1,p_2\in\P(Q)$ such that $p=p_1\alpha p_2$. Furthermore, we set
\begin{align*}
    &\Pp(Q) = \{p\in\P(Q):l(p)\geq 1\}\\
    &\P_k(Q) = \{p\in \P(Q): l(p)=k\}
\end{align*}
for $k\geq 0$. One extends $s,t$ to maps $\P(Q)\to Q_0$ by setting
\begin{align*}
    &s(\alpha_1\cdots\alpha_n)=s(\alpha_1)\\
    &t(\alpha_1\cdots\alpha_n)=t(\alpha_n),
\end{align*}
and we say that $p,q\in\P(Q)$ are parallel, and write $p||q$, if $s(p)=s(q)$ and $t(p)=t(q)$; moreover, for $q\in\P(Q)$ we introduce the set of all paths parallel to $q$
\begin{align*}
    \Ppar{q} = \{p\in\P(Q): p||q\},
\end{align*}
and for $\alpha\in Q_1$
\begin{align*}
    \Qpalpha = \{\beta\in Q_1:\exists p\in\Ppar{\alpha}: \beta\in p\}.
\end{align*}
For $u,v\in Q_0$ we set
\begin{align*}
    \P(u,v) = \{p\in\P(Q):s(p)=u,\, t(p)=v\}.
\end{align*}
A path $p\in\Pp(Q)$ is called a cycle if $s(p)=t(p)$; a cycle of length one is called a loop. Two cycles $c_1,c_2\in\Pp(Q)$ are called distinct if they are not powers of the same cycle. A quiver is called acyclic if it contains no cycles. 
Furthermore, we let 
\begin{align*}
    \P_A=\{p\in\P(Q):s(p)\neq t(p)\}   
\end{align*}
denote the set of acyclic paths. A quiver is called connected if the corresponding undirected graph of the quiver is connected. 

Finally, for $u\in Q_0$ and $\alpha\in Q_1$, we introduce
\begin{align*}
    &T_u = \{p\in\P(Q):t(p)=u\}\qquad
    S_u = \{q\in\P(Q):s(q)=u\}\\
    &T_\alpha = T_{s(\alpha)}\qquad S_{\alpha} = S_{t(\alpha)}\qquad
    B_{\alpha} = T_{\alpha}\times S_{\alpha}.
\end{align*}
Thus, $T_{\alpha}$ is the set of paths ending at $s(\alpha)$ and $S_{\alpha}$ is the set of paths starting at $t(\alpha)$.

A slightly less standard concept for quivers is that of a bypass. An edge is a bypass if it connects the source and target of a (possibly) longer path. This concept will later be important when considering endomorphisms of $\Omegaoneg$. Let us recall the precise definition.

\begin{definition}[cf. \cite{acmt:simply.connected.shurian}]
    Let $Q=(Q_0,Q_1)$ be a quiver. For $\alpha\in Q_1$ and $p\in\P(Q)$, the pair $(\alpha,p)$ is called a bypass if $p\in\Ppara\cap\Pp(Q)$ and $p\neq\alpha$.  An edge $\alpha\in Q_1$ is called a bypass if there exists a bypass $(\alpha,p)$ or, in other words, if $|\Ppara|>1$. If $\alpha$ is not a bypass for any $\alpha\in Q_1$ then $Q$ is called a nonpassing quiver. 
\end{definition}

\noindent
Given a quiver $Q=(Q_0,Q_1)$ one introduces its path algebra as the vector space of paths together with the concatenation product. 

\begin{definition}
    Let $Q=(Q_0,Q_1,s,t)$ be a quiver and let $\K$ be a field. The path algebra $\K Q$ is the $\K$-vector space generated by the elements of $\P(Q)$ together with the multiplication
    \begin{align*}
        &up = \delta_{u,s(p)}p\qquad pu = \delta_{t(p),u}p
        \qquad\text{for }u\in Q_0,p\in\P(Q)\\
        &(\alpha_1\cdots\alpha_n)(\beta_1\cdots\beta_m) 
        = \delta_{t(\alpha_n),s(\beta_1)}\alpha_1\cdots\alpha_n\beta_1\cdots\beta_m.
    \end{align*}
    By definition, $\P(Q)$ is a basis of $\K Q$ as a vector space over $\K$.
\end{definition}

\noindent
The path algebra of a quiver is an $\naturals$-graded algebra graded by path length, i.e.
\begin{align*}
    \KQ = \bigoplus_{k\geq 0}\KQ_{k}.
\end{align*}
where $\KQ_k\subseteq \KQ$ denotes the subspace generated by paths of length $k$. 

Since elements of $Q_0$ are idempotents, one may introduce the left projective $\KQ$-module
\begin{align*}
    &P_u = \KQ u
\end{align*}
for $u\in Q_0$, as well as the projective $\KQ$-bimodule
\begin{align*}
    P_{u,v} = \KQ u\otimesK v\KQ
\end{align*}
for $u,v\in Q_0$. In the standard way, we equivalently consider $\KQ$-bimodules as left $\KQ^e$-modules, where $\KQ^e=\KQ\otimesK\KQop$ denotes the enveloping algebra of $\KQ$, giving
\begin{align*}
    P_{u,v} = (\KQ\otimesK \KQop)(u\otimesK v).
\end{align*}

\noindent
In particular, we will be interested in the following bimodules
\begin{align*}
    P_{\alpha} = P_{s(\alpha),t(\alpha)}
\end{align*}
for $\alpha\in Q_1$. Elements of $P_{\alpha}$ are sums of tensor products of the form $p\otimesK q$ with $t(p)=s(\alpha)$ and $s(q)=t(\alpha)$, i.e. $p\in T_{\alpha}$ and $q\in S_{\alpha}$. Hence, every element $x\in P_{\alpha}$ can be written as
\begin{align*}
    x = \sum_{(p,q)\in B_{\alpha}}\lambda_{(p,q)}p\otimesK q
\end{align*}
with $\lambda_{(p,q)}\in\K$. In fact, since $\P(Q)$ is a (vector space) basis of $\KQ$ it follows that $\{p\otimesK q:(p,q)\in B_{\alpha}\}$ is a basis of $P_{\alpha}$, implying that the coefficients $\lambda_{(p,q)}$ are unique. 

\subsection{Derivations on path algebras}

A derivation based differential calculus is defined starting from a choice of Lie algebra of derivations. In the following, we identify a particular Lie algebra that will be used to construct the differential calculus.

In \cite{gl:structure.hochschild.path.algebras} there is a complete description of the Lie algebra of derivations of a path algebra. However, let us recall a particular subalgebra of derivations that will be of interest to us and provide a self contained account of the basic results.

Let $Q=(Q_0,Q_1)$ be a quiver. For $\alpha\in Q_1$ and $a\in\KQ$ define $D_{\alpha,a}:\KQ\to\KQ$ as
\begin{align*}
    &D_{\alpha,a}(v) = 0\qquad\text{for }v\in Q_0\\
    &D_{\alpha,a}(\alpha_1\cdots\alpha_n)
    =\sum_{k=1}^n\delta_{\alpha,\alpha_k}
    \alpha_1\cdots\alpha_{k-1}a\alpha_{k+1}\cdots\alpha_n
\end{align*}
and extended it to $\KQ$ by linearity. It follows immediately that
\begin{align}
    D_{\alpha,\lambda a+b} = \lambda D_{\alpha,a}+D_{\alpha,b}
\end{align}
for $\alpha\in Q_1$, $\lambda\in\K$ and $a,b\in\KQ$. Note that a necessary condition for the above map to be nontrivial is that $a$ contains paths that are parallel to $\alpha$. Let us now show that $D_{\alpha,a}$ are indeed derivations of $\KQ$ closed under the Lie bracket.

\begin{proposition}\label{prop:Dalpha.properties}
    If $\alpha\in Q_1$ and $a\in\KQ$ then
    \begin{align}\label{eq:Dalpha.derivation}
        D_{\alpha,a}(b_1b_2)=D_{\alpha,a}(b_1)b_2+b_1D_{\alpha,a}(b_2)
    \end{align}
    for $b_1,b_2\in\KQ$ and
    \begin{align}\label{eq:Dalpha.commutator}
        [D_{\alpha,a},D_{\beta,b}] = 
        D_{\beta,D_{\alpha,a}(b)}-D_{\alpha,D_{\beta,b}(a)}.
    \end{align}
    for $\alpha,\beta\in Q_1$ and $a,b\in\KQ$.
\end{proposition}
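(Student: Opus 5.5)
By bilinearity of both sides of \eqref{eq:Dalpha.derivation} in $(b_1,b_2)$, it suffices to take $b_1,b_2\in\P(Q)$ basis paths. If one of them is a vertex $v$, then $D_{\alpha,a}(v)=0$ and $vb_2$ is either $b_2$ or $0$ according to $\delta_{v,s(b_2)}$, so the identity holds directly (similarly for $b_2$ a vertex). For $b_1=\alpha_1\cdots\alpha_n$ and $b_2=\beta_1\cdots\beta_m$ of positive length there are two cases. If $t(\alpha_n)\neq s(\beta_1)$, then $b_1b_2=0$. In that case every term of $D_{\alpha,a}(b_1)b_2$ ends with $\alpha_n$ or with $a$ sandwiched as $\cdots a$; we need it to vanish. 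When $k<n$ it does, because the last edge is still $\alpha_n$. When $k=n$ the term is $\alpha_1\cdots\alpha_{n-1}a\,b_2$, which need not vanish for arbitrary $a$.

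This is the one real subtlety. The cleanest fix is to reduce by linearity in $a$ to $a=p$ a single path, and to interpret the definition of $D_{\alpha,a}$ so that only the parallel component of $a$ contributes. Concretely, replacing $a$ by $s(\alpha)\,a\,t(\alpha)$ does not change the defining formula when the surrounding edges are composable, since $\alpha_{k-1}a\alpha_{k+1}=\alpha_{k-1}s(\alpha)at(\alpha)\alpha_{k+1}$ there. I would therefore first note that $D_{\alpha,a}=D_{\alpha,s(\alpha)at(\alpha)}$ on paths of length $\geq 2$ with interior occurrences. I would then assume WLOG that $a\in s(\alpha)\KQ t(\alpha)$, i.e.\ $a$ is a combination of paths in $\Ppara$, interpreting boundary occurrences likewise. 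With $a$ parallel to $\alpha$, substituting $a$ for an occurrence of $\alpha$ preserves source and target. Hence $t(\alpha_1\cdots a\cdots)=t(b_1)$, the $k=n$ term also vanishes when $b_1b_2=0$, and $b_1D_{\alpha,a}(b_2)=0$ by the symmetric argument. If instead $b_1b_2$ is the concatenated path, then its occurrences of $\alpha$ split into those in positions $1,\dots,n$ and $n+1,\dots,n+m$, and the sum defining $D_{\alpha,a}(b_1b_2)$ splits exactly into $D_{\alpha,a}(b_1)b_2+b_1D_{\alpha,a}(b_2)$.

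For \eqref{eq:Dalpha.commutator}, the plan is to use that the commutator of derivations is a derivation and that a derivation vanishing on $Q_0$ is determined by its values on $Q_1$, by \eqref{eq:Dalpha.derivation} and the fact that $\KQ$ is generated by $Q_0\cup Q_1$. Both sides vanish on vertices. On an edge $\gamma$ we have $D_{\alpha,a}(\gamma)=\delta_{\alpha,\gamma}a$, so
\begin{align*}
[D_{\alpha,a},D_{\beta,b}](\gamma)=\delta_{\beta,\gamma}D_{\alpha,a}(b)-\delta_{\alpha,\gamma}D_{\beta,b}(a),
\end{align*}
which agrees with $D_{\beta,D_{\alpha,a}(b)}(\gamma)-D_{\alpha,D_{\beta,b}(a)}(\gamma)$. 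Since both sides are derivations that agree on generators, they are equal.

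The main obstacle is the composability bookkeeping in the first step, namely making sure that substitution respects the zero products. I would handle it through the reduction to parallel $a$ described above, and treat the rest as routine.
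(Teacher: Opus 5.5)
Your proposal is correct and follows essentially the paper's route. You reduce the Leibniz rule to basis paths and split the substitution sum at the junction of $b_1$ and $b_2$; you then check the bracket identity on vertices and edges and propagate it with the derivation property, which the paper phrases as an induction on path length (the same argument).

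Your treatment of boundary substitutions, which effectively replaces $a$ by $s(\alpha)\,a\,t(\alpha)$, makes explicit what the paper uses tacitly when it asserts that every path in $D_{\alpha,a}(b_k)$ has the same source and target as $b_k$. Without that reading the Leibniz rule genuinely fails for non-parallel $a$, already in your vertex case, so state it at the outset rather than midway.
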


\begin{proof}
    Let us start by showing that \eqref{eq:Dalpha.derivation} holds, and we note that it is sufficient to prove it for $b_1,b_2\in\P(Q)$. First, we note that if $b_1b_2=0$ then $t(b_1)\neq s(b_2)$ and $D_{\alpha,a}(b_1)b_2=b_1D_{\alpha,a}(b_2)=0$ since every path in $D_{\alpha,a}(b_k)$ has the same source and target as $b_k$ for $k=1,2$. Now, assume that $b_1,b_2\in\P(Q)$ and $b_1b_2\neq 0$ (implying that $t(b_1)=s(b_2)$). If $b_1\in Q_0$ then 
    \begin{align*}
        D_{\alpha,a}(b_1)b_2 + b_1D_{\alpha,a}(b_2)
        =b_1D_{\alpha,a}(b_2) = D_{\alpha,a}(b_1b_2)
    \end{align*}
    since $D_{\alpha,a}(b_1)=0$, and similarly if $b_2\in Q_0$; thus, we assume in the following that $b_1,b_2\in\P_+(Q)$ and write
    \begin{align*}
        &b_1 = \alpha_1\cdots\alpha_n\qquad 
        b_2= \beta_1\cdots\beta_m\\
        &b_1b_2=\alpha_1\cdots\alpha_n\beta_1\cdots\beta_m
        =\gamma_1\cdots \gamma_{n+m}
    \end{align*}
    for $n,m\geq 1$. A straight forward computation yields
    \begin{align*}
        &D_{\alpha,a}(b_1)b_2 + b_1D_{\alpha,a}(b_2)\\
        &= \sum_{k=1}^n\delta_{\alpha,\alpha_k}\alpha_{1}\cdots\alpha_{k-1}a\alpha_{k+1}\cdots\alpha_n\beta_1\cdots\beta_m
        +\sum_{k=1}^m\delta_{\alpha,\beta_k}\alpha_1\cdots\alpha_n\beta_1\cdots\beta_{k-1}a\beta_{k+1}\cdots\beta_m\\
        &=\sum_{k=1}^{m+n}\delta_{\alpha,\gamma_k}\gamma_1\cdots\gamma_{k-1}a\gamma_{k+1}\cdots\gamma_{m+n}
        =D_{\alpha,a}(\gamma_1\cdots\gamma_{m+n})
        =D_{\alpha,a}(b_1b_2),
    \end{align*}
    which completes the proof of \eqref{eq:Dalpha.derivation}. 
    
    Next, let us prove \eqref{eq:Dalpha.commutator} by showing that it holds when acting on arbitrary $p\in\P(Q)$. First, if $p\in Q_0$ then every term vanishes since $D_{\gamma,c}(v)=0$ for all $v\in Q_0$, $\gamma\in Q_1$ and $c\in \P(Q)$. Now, assume that $p\in\P_+(Q)$ and let us proceed by induction. For $p=\gamma\in\P_1(Q)$ one finds that
    \begin{align*}
        [D_{\alpha,a},D_{\beta,b}](\gamma)
        &= \delta_{\beta,\gamma}D_{\alpha,a}(b)-\delta_{\alpha,\gamma}D_{\beta,b}(a)
        =D_{\beta,D_{\alpha,a}(b)}(\gamma)-D_{\alpha,D_{\beta,b}(a)}(\gamma)
    \end{align*}
    showing \eqref{eq:Dalpha.commutator} in this case. Now assume that \eqref{eq:Dalpha.commutator} holds for all $p\in\P_k(Q)$ with $k\leq n$.
    Using \eqref{eq:Dalpha.derivation} one writes
    \begin{align*}
        [D_{\alpha,a},D_{\beta,b}](\gamma_1\cdots\gamma_{n+1})
        &= [D_{\alpha,a},D_{\beta,b}](\gamma_1\cdots\gamma_n)\gamma_{n+1}
        +\gamma_1\cdots\gamma_n[D_{\alpha,a},D_{\beta,b}](\gamma_{n+1})
    \end{align*}
    and using that \eqref{eq:Dalpha.commutator} holds for paths of length at most $n$, one obtains
    \begin{align*}
        [D_{\alpha,a},D_{\beta,b}](\gamma_1\cdots\gamma_{n+1})
        &=\paraa{D_{\beta,D_{\alpha,a}(b)}(\gamma_1\cdots\gamma_n)-D_{\alpha,D_{\beta,b}(a)}(\gamma_1\cdots\gamma_n)}\gamma_{n+1}\\
        &\qquad+\gamma_1\cdots\gamma_n\para{D_{\beta,D_{\alpha,a}(b)}(\gamma_{n+1})-D_{\alpha,D_{\beta,b}(a)}(\gamma_{n+1})}\\
        &=D_{\beta,D_{\alpha,a}(b)}(\gamma_1\cdots\gamma_{n+1})-D_{\alpha,D_{\beta,b}(a)}(\gamma_1\cdots\gamma_{n+1}).
    \end{align*}
    It follows by induction that \eqref{eq:Dalpha.commutator} holds for all $p\in\P(Q)$.
\end{proof}

\noindent
It follows from  Proposition~\ref{prop:Dalpha.properties} that
\begin{align*}
    \D_2 = \{D_{\alpha,a}:\alpha\in Q_1,\, a\in\KQ\}
\end{align*}
is a Lie algebra over $\K$ and we note that $\D_2$ is generated by $D_{\alpha,p}$ for $\alpha\in Q_1$ and $p\in\P(Q)$ such that $\alpha||p$. In fact, $\{D_{\alpha,p}:\alpha\in Q_1,\,p\in \Ppara\}$ is a basis for $\D_2$ since
\begin{align*}
    &\sum_{\alpha\in Q_1,p\in \Ppara}\hspace{-4mm} c_{\alpha,p}D_{\alpha,p}=0\implies
    \sum_{\alpha\in Q_1,p\in \Ppara}\hspace{-4mm} c_{\alpha,p}D_{\alpha,p}(\beta)=0\quad\forall\,\beta\in Q_1\implies\\
    &\sum_{p\in \Ppar{\beta}}\hspace{-1mm}c_{\beta,p}p=0\quad\forall\,\beta\in Q_1
    \implies
    c_{\beta,p}=0\quad\forall\,\beta\in Q_1,p\in\Ppar{\beta},
\end{align*}
showing that these derivations are indeed linearly independent. In particular, since $D_{\alpha,\alpha}\in\D_2$ for all $\alpha\in Q_1$ one concludes that $\dim_{\K}(\D_2)\geq|Q_1|$.

In \cite{gl:structure.hochschild.path.algebras} it is shown that 
\begin{align*}
    \{[p,\cdot]:p\in\P_A(Q)\}\cup\{D_{\alpha,p}:\alpha\in Q_1,\,p\in \P(Q)\}
\end{align*}
is a basis of $\Der(\KQ)$. In particular, $\D_2$ contains all outer derivations, and no inner derivations defined by acyclic paths. However, in \cite{gl:structure.hochschild.path.algebras} it is shown that for a cycle, i.e. $p\in\P(Q)$ such that $s(p)=t(p)=v$, one obtains
\begin{align*}
    [p,\cdot] = \hspace{-4mm}\sum_{\alpha\in Q_1:s(\alpha)=v}\hspace{-4mm}D_{\alpha,p\alpha}
    \hspace{2mm}-\hspace{-2mm}\sum_{\alpha\in Q_1:t(\alpha)=v}\hspace{-4mm}D_{\alpha,\alpha p},
\end{align*}
implying that $\D_2$ contains all inner derivations defined by cycles. In particular, $\D_2$ always contains the inner derivations $[v,\cdot]$ for $v\in Q_0$, written as
\begin{align}\label{eq:inner.der.in.D2}
    [v,\cdot] = \hspace{-4mm}\sum_{\alpha\in Q_1:s(\alpha)=v}\hspace{-4mm}D_{\alpha,\alpha}
    \hspace{2mm}-\hspace{-2mm}\sum_{\beta\in Q_1:t(\beta)=v}\hspace{-4mm}D_{\beta,\beta}.
\end{align}

\section{Noncommutative differential calculi on path algebras}

\noindent 
In the following, we will study derivation based differential calculi on path algebras generated by the Lie algebra $\D_2$. Since $\d u=0$ for all $\d\in\D_2$ and $u\in Q_0$, it follows that $du=0$ and $\OmegaoneD$ is generated (as a bimodule) by $\{d\alpha:\alpha\in Q_1\}$. The form $d\alpha$ satisfy relations that have a close similarity with those of the edge $\alpha$; let us summarize this in the following proposition. 

\begin{proposition}\label{prop:OmegaoneD.relations}
    Let $Q=(Q_0,Q_1)$ be a quiver. Then $\OmegaoneD$ is generated as a $\KQ$-bimodule by $\{d\alpha:\alpha\in Q_1\}$ and
    \begin{align*}
        &p(d\alpha) = 0\equivalent t(p)\neq s(\alpha)\\
        &(d\alpha)q = 0\equivalent t(\alpha)\neq s(q)
    \end{align*}
    for $\alpha\in Q_1$ and $p,q\in\P(Q)$. Moreover,
    \begin{align*}
        s(\alpha)d\alpha = d\alpha\, t(\alpha) = d\alpha.
    \end{align*}
    for $\alpha\in Q_1$.
\end{proposition}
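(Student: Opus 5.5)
The plan is to work directly with $d\alpha$ as an element of $\Omegabg{1}$, i.e. as the map $\d\mapsto\d(\alpha)$ on $\D_2$, and to evaluate everything on the basis derivations $D_{\beta,r}$ with $\beta\in Q_1$, $r\in\Ppar{\beta}$.

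\emph{Generation.} An element of $\OmegaoneD$ is a sum of terms $a\,db$. By the derivation property, $d(b_1b_2)=(db_1)b_2+b_1db_2$, and $du=0$ for $u\in Q_0$ because every $\d\in\D_2$ kills vertices. For a path $\alpha_1\cdots\alpha_n$ the Leibniz rule then gives $d(\alpha_1\cdots\alpha_n)=\sum_k \alpha_1\cdots\alpha_{k-1}(d\alpha_k)\alpha_{k+1}\cdots\alpha_n$. Hence $\{d\alpha\}$ generates $\OmegaoneD$ as a bimodule.

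\emph{The identities $s(\alpha)d\alpha=d\alpha\,t(\alpha)=d\alpha$.} Every $\d\in\D_2$ maps a path to a combination of paths parallel to it. This holds for each $D_{\beta,r}$, since $r\|\beta$. So $\d(\alpha)$ is a combination of paths from $s(\alpha)$ to $t(\alpha)$, and multiplying by $s(\alpha)$ on the left or by $t(\alpha)$ on the right does nothing. Evaluating pointwise on $\d$ gives both identities.

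\emph{The equivalences.} For $p\in\P(Q)$ we have $(p\,d\alpha)(\d)=p\,\d(\alpha)$. If $t(p)\neq s(\alpha)$, then $p\,\d(\alpha)=p\,s(\alpha)\d(\alpha)=0$ for every $\d$, which proves one direction. For the converse, suppose $t(p)=s(\alpha)$ and evaluate at $\d=D_{\alpha,\alpha}$. Then $D_{\alpha,\alpha}(\alpha)=\alpha$, so $(p\,d\alpha)(D_{\alpha,\alpha})=p\alpha$. This is a nonzero basis path of $\KQ$, since it is the concatenation of $p$ and $\alpha$ (or equals $\alpha$ when $p$ is the vertex $s(\alpha)$). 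Hence $p\,d\alpha\neq0$. The right-hand statement is handled symmetrically: $(d\alpha\,q)(D_{\alpha,\alpha})=\alpha q\neq0$ when $t(\alpha)=s(q)$, and $(d\alpha)q=0$ otherwise.

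\emph{Main point of care.} No step is really an obstacle. The only thing to check is that $p\alpha$ is nonzero in $\KQ$ whenever the path composition is defined, which is immediate because $\P(Q)$ is a basis. One should also note that the parallelism of $\d(\alpha)$ with $\alpha$ is valid for all of $\D_2$, not just for the generators, since it is preserved under linear combinations.
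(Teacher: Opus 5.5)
Your proof is correct and follows essentially the same route as the paper: generation via the Leibniz rule, nonvanishing by evaluating on $D_{\alpha,\alpha}$, and both the vanishing statements and the identities $s(\alpha)d\alpha=d\alpha\,t(\alpha)=d\alpha$ from the fact that every derivation in $\D_2$ sends $\alpha$ to a combination of paths parallel to $\alpha$. The only difference is cosmetic: you deduce the vanishing of $p(d\alpha)$ from $s(\alpha)d\alpha=d\alpha$, whereas the paper checks it directly on the basis derivations $D_{\beta,q}$, splitting into the cases $\beta=\alpha$ and $\beta\neq\alpha$.
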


\begin{proof}
    Since the differential satisfies Leibniz' rule and $da$, for $a\in\KQ$, generate $\OmegaoneD$, it is clear that $\{d\alpha:\alpha\in Q_1\}$ generates $\OmegaoneD$ as a bimodule. Now, assume that $p(d\alpha)=0$ for some $p\in\P(Q)$. Evaluating this form on the derivation $D_{\alpha,\alpha}\in\D_2$ gives $p\alpha=0$ which implies that $t(p)\neq s(\alpha)$. Conversely, assume that $t(p)\neq s(\alpha)$ and let us show that $p(d\alpha)(D_{\beta,q})=0$ for all $\beta\in Q_1$ and $q\in\P(Q)$ such that $\beta||q$.
    One finds that
    \begin{align*}
        p(d\alpha)(D_{\alpha,q}) = pq = 0
    \end{align*}
    since $t(p)\neq s(\alpha)=s(q)$, and
    \begin{align*}
        p(d\alpha)(D_{\beta,q}) = pD_{\beta,q}(\alpha) = 0
    \end{align*}
    for $\beta\neq \alpha$; thus, $p(d\alpha)(\d)=0$ for all $\d\in\D_2$. Similarly, one proves that $(d\alpha)q=0$ if and only if $t(\alpha)\neq s(q)$.

    Next, let us compute
    \begin{align*}
        &\paraa{s(\alpha)d\alpha}(D_{\beta,p}) = 
        \begin{cases}
            0 &\text{if }\alpha\neq\beta\\
            s(\alpha)p=p &\text{if }\alpha=\beta
        \end{cases}
        \quad=d\alpha(D_{\beta,p})\\
        &\paraa{d\alpha\,t(\alpha)}(D_{\beta,p}) = 
        \begin{cases}
            0 &\text{if }\alpha\neq\beta\\
            pt(\alpha)=p &\text{if }\alpha=\beta
        \end{cases}
        \quad=d\alpha(D_{\beta,p})
    \end{align*}
    since $p||\beta$. This shows that $s(\alpha)d\alpha=d\alpha\,t(\alpha)=d\alpha$.
\end{proof}

\noindent 
In general, the dimension of $\Omega_{\g}$ is bounded by $\dim(\g)$. However, for $\Omega_{\D_2}$ the dimension is intimately connected to the longest path in the quiver, as formulated in the next result for acyclic quivers. 

\begin{proposition}
    Let $Q=(Q_0,Q_1)$ be an acyclic quiver. Then $\Omega_{\D_2}^n\neq 0$ if and only if there exists a path of length $n$.
\end{proposition}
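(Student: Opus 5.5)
The plan is to prove the two implications separately. Both rest on one evaluation formula for products of $1$-forms: for $\omega_1,\ldots,\omega_n\in\OmegaoneD$ and $\d_1,\ldots,\d_n\in\D_2$,
\begin{align*}
    (\omega_1\omega_2\cdots\omega_n)(\d_1,\ldots,\d_n)=\sum_{\sigma\in S_n}\sgn(\sigma)\,\omega_1(\d_{\sigma(1)})\omega_2(\d_{\sigma(2)})\cdots\omega_n(\d_{\sigma(n)}).
\end{align*}
I will first establish this by induction on $n$ from the definition of the product in $\Omegabg{}$. When one factor is a $1$-form the normalising factor $\frac{1}{k!\,1!}$ cancels exactly the $k!$ repetitions of each shuffle. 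A left or right multiplication by a path $p$ simply multiplies the value on the left or right.

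\emph{Existence of a path gives a nonzero form.} Let $\alpha_1\cdots\alpha_n$ be a path of length $n$. Since $Q$ is acyclic, the edges $\alpha_1,\ldots,\alpha_n$ are pairwise distinct; a repeated edge would produce a cycle. Evaluate $d\alpha_1\cdots d\alpha_n\in\Omega^n_{\D_2}$ on $(D_{\alpha_1,\alpha_1},\ldots,D_{\alpha_n,\alpha_n})$. By definition $d\alpha_i(D_{\alpha_j,\alpha_j})=D_{\alpha_j,\alpha_j}(\alpha_i)=\delta_{ij}\alpha_i$, so only $\sigma=\mathrm{id}$ survives in the formula above. The value is therefore $\alpha_1\cdots\alpha_n\neq 0$, and hence $\Omega^n_{\D_2}\neq0$.

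\emph{A nonzero form gives a path.} First I reduce to a spanning set. By Proposition~\ref{prop:OmegaoneD.relations} and the Leibniz rule, $d$ of a path is a sum of terms $p\,d\beta\,q$ with $p,q\in\P(Q)$ and $\beta\in Q_1$. Hence $\Omega^n_{\D_2}$ is spanned by elements of the form
\begin{align*}
    p_0\,d\beta_1\,p_1\,d\beta_2\cdots d\beta_n\,p_n
\end{align*}
with $p_i\in\P(Q)$ and $\beta_i\in Q_1$.

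If $\Omega^n_{\D_2}\neq0$, some such element is nonzero. It suffices to test it on basis derivations $D_{\gamma_j,q_j}$ with $q_j\in\Ppar{\gamma_j}$, since these span $\D_2$ and the form is multilinear. By the evaluation formula, the value is a signed sum of products
\begin{align*}
    p_0\,D_{\gamma_{\sigma(1)},q_{\sigma(1)}}(\beta_1)\,p_1\cdots D_{\gamma_{\sigma(n)},q_{\sigma(n)}}(\beta_n)\,p_n.
\end{align*}
Each factor $D_{\gamma,q}(\beta_i)$ is either $0$ or the path $q$, which is parallel to $\beta_i$. In an acyclic quiver $q$ cannot be a vertex, because $s(\beta_i)\neq t(\beta_i)$; so $l(q)\geq1$. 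Consequently any nonzero term is a concatenated path of length at least $n$. Taking its first $n$ edges gives a path of length exactly $n$.

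The main point requiring care is the bookkeeping in the evaluation formula, in particular the normalisation constants in the product. Everything else follows directly from Proposition~\ref{prop:OmegaoneD.relations}, acyclicity, and the definition of $D_{\alpha,a}$.
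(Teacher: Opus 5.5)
Your proof is correct and follows the paper's argument. One direction evaluates $d\alpha_1\cdots d\alpha_n$ on $(D_{\alpha_1,\alpha_1},\ldots,D_{\alpha_n,\alpha_n})$; the converse picks a nonzero term in the evaluation of a spanning element, and that term is a path of length at least $n$. You are more explicit than the paper on the two points it leaves implicit: the evaluation formula for products of $1$-forms, and the fact that in an acyclic quiver a path parallel to an edge has length at least one.
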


\begin{proof}
    Let $p=\alpha_1\cdots\alpha_n$ be a path of length $n$; since the quiver is acyclic every edge in the path is distinct. Then 
    \begin{align*}
        d\alpha_1d\alpha_2\cdots d\alpha_n(D_{\alpha_1,\alpha_1},D_{\alpha_2,\alpha_2},\ldots,D_{\alpha_n,\alpha_n}) = \alpha_1\alpha_2\cdots\alpha_n\neq 0
    \end{align*}
    showing that there exists a nonzero element in $\Omega_{\D_2}^n$. Conversely, assume that there exists a nonzero element $\omega\in\Omega_{\D_2}^n$. Let us write
    \begin{align*}
        \omega = \sum_{k=1}^N \lambda_k p_0^k(dp_1^k)(dp_2^k)
        \cdots (dp_n^k),
    \end{align*}
    with $p_0^k\in\P(Q)$ and $p_l^k\in\Pp(Q)$ for $0\leq l\leq n$ and $1\leq k\leq N$. Since $\omega\neq 0$ there exists 
    $D_{\alpha_1,q_1},\ldots,D_{\alpha_n,q_n}$, $1\leq k_0\leq N$ and $\sigma\in S_n$ such that
    \begin{align*}
        p_0^{k_0} dp_1^{k_0}(D_{\alpha_{\sigma(1)},q_{\sigma(1)}})\cdots 
        dp_{n}^{k_0}(D_{\alpha_{\sigma(n)},q_{\sigma(n)}})\neq 0
    \end{align*}
    implying that there exists a path of length at least $n$ and, hence, a path of length $n$.
\end{proof}

\noindent
For an acyclic quiver $Q$, the above result implies that the dimension of the differential calculus is given by the length of the longest path in $Q$.

For each $\alpha\in Q_1$, let $\angles{d\alpha}$ denote the bimodule generated by $d\alpha$. The next result shows that $\OmegaoneD$ splits into a direct sum of such submodules. 

\begin{proposition}\label{prop:Omega.decomp.dalpha}
    Let $Q=(Q_0,Q_1)$ be a quiver. Then
    \begin{align*}
        \phi:\bigoplus_{\alpha\in Q_1}\angles{d\alpha}\to \OmegaoneD\quad\quad\text{given by}\quad
        \bigoplus_{\alpha\in Q_1}\omega_{\alpha}\mapsto
        \sum_{\alpha\in Q_1}\omega_{\alpha}
    \end{align*}
    is a bimodule isomorphism.
\end{proposition}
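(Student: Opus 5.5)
We need to show that $\phi$ is a well-defined bimodule homomorphism, that it is surjective, and that it is injective; only injectivity requires real work.

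\textbf{Homomorphism and surjectivity.} Each $\angles{d\alpha}$ is a sub-bimodule of $\OmegaoneD$, so $\phi$ is just the sum of the inclusion maps. It is therefore a bimodule homomorphism. Surjectivity follows from Proposition~\ref{prop:OmegaoneD.relations}. Since $\OmegaoneD$ is generated as a bimodule by $\{d\alpha:\alpha\in Q_1\}$, every $\omega\in\OmegaoneD$ can be written as a finite sum $\sum_i a_i(d\alpha_i)b_i$. Grouping the terms by the edge $\alpha_i$ gives $\omega=\sum_\alpha\omega_\alpha$ with $\omega_\alpha\in\angles{d\alpha}$.

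\textbf{Injectivity.} The key observation is that elements of $\angles{d\alpha}$ are detected only by derivations of the form $D_{\alpha,q}$. Take $\omega_\alpha=\sum_i a_i(d\alpha)b_i\in\angles{d\alpha}$ and any $\beta\in Q_1$, $q\in\Ppar{\beta}$. Then
\begin{align*}
\omega_\alpha(D_{\beta,q})=\sum_i a_i D_{\beta,q}(\alpha)b_i=\delta_{\alpha,\beta}\sum_i a_i q b_i ,
\end{align*}
because $D_{\beta,q}(\alpha)=\delta_{\alpha,\beta}q$. In particular, $\omega_\alpha(D_{\beta,q})=0$ whenever $\beta\neq\alpha$.

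Now suppose $\sum_\alpha\omega_\alpha=0$. Fix $\beta\in Q_1$ and evaluate the sum on $D_{\beta,q}$ for each $q\in\Ppar{\beta}$. By the observation above, only the $\beta$-term survives, so $\omega_\beta(D_{\beta,q})=0$. The same observation shows $\omega_\beta(D_{\gamma,q})=0$ for every $\gamma\neq\beta$. Hence $\omega_\beta$ vanishes on every basis element $D_{\gamma,q}$ of $\D_2$. Since elements of $\Omegabg{1}$ are $\K$-linear maps on $\D_2$, this gives $\omega_\beta=0$. As $\beta$ was arbitrary, $\phi$ is injective.

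\textbf{Main point.} There is no real obstacle here. The only thing to verify carefully is that $\{D_{\beta,q}:\beta\in Q_1,\,q\in\Ppar{\beta}\}$ spans $\D_2$. This was established earlier in the paper, and it is what allows vanishing on these elements to imply vanishing of the form.
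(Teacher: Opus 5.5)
Your proof is correct and follows essentially the same route as the paper: surjectivity comes from $\OmegaoneD$ being generated by the $d\alpha$ (Proposition~\ref{prop:OmegaoneD.relations}), and injectivity comes from evaluating $\sum_\alpha\omega_\alpha=0$ on the spanning derivations $D_{\beta,q}$, using that $\omega_\alpha(D_{\beta,q})=0$ for $\beta\neq\alpha$. You also write out the computation $\omega_\alpha(D_{\beta,q})=\delta_{\alpha,\beta}\sum_i a_iqb_i$, which the paper leaves implicit.
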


\begin{proof}
    First of all, $\phi$ is clearly a surjective bimodule homomorphism since $\{d\alpha:\alpha\in Q_1\}$ generates $\OmegaoneD$ by Proposition~\ref{prop:OmegaoneD.relations}. Let us now show that $\phi$ is injective. To this end, assume that
    \begin{align*}
        \phi\paraa{\bigoplus_{\alpha\in Q_1}\omega_\alpha}=0\equivalent
        \sum_{\alpha\in Q_1}\omega_{\alpha}=0.
    \end{align*}
    Evaluating at $D_{\beta,q}\in\D_2$ gives
    \begin{align*}
        \omega_{\beta}(D_{\beta,q}) = 0
    \end{align*}
    implying that $\omega_{\beta}(\d)=0$ for all $\d\in\D_2$ since $\omega_{\beta}(D_{\gamma,p})=0$ if $\gamma\neq\beta$. Hence, $\omega_{\beta}=0$ for all $\beta\in Q_1$, showing that $\phi$ is indeed injective. 
\end{proof}

\noindent 
The above result tells us that the module differential forms splits as
\begin{align*}
    \OmegaoneD = \bigoplus_{\alpha\in Q_1}\angles{d\alpha}
\end{align*}
as a $\KQ$-bimodule. We will proceed to derive further properties of the modules $\angles{da}$, but let us first study the cohomology of $\Omega_{\D_2}$.

\subsection{The cohomology of $\Omega_{\D_2}$}

The cohomology of $\Omega_{\g}$ is defined in the standard way as for differential graded algebras
\begin{align*}
    H^k(\Omega_{\g}) = \frac{\ker(d_k)}{\im(d_{k-1})}
\end{align*}
describing closed $k$-forms that are not exact. In the following we will show that if $\K$ is a field of characteristic $0$ then $H^k(\Omega_{\D_2})=0$ for $k\geq 1$. However, it turns out that one can prove a slightly more general result for $\naturals$-graded algebras.

To this end, let $A$ be a $\naturals$-graded algebra over $\K$ and write
\begin{align*}
    A = \bigoplus_{k\geq 0}A_k
\end{align*}
as a sum of graded components. In this setting, one can define the so called Euler derivation $\dh\in\Der(A)$ satisfying
\begin{align*}
    \dh(a) = ka
\end{align*}
for $a\in A_k$. One can then prove the following result.

\begin{proposition}\label{prop:Omega.cohomology}
    Let $A$ be a $\naturals$-graded algebra over a field $\K$ of characteristic zero
    \begin{align*}
        A = \bigoplus_{k\geq 0}A_k
    \end{align*}
    and let $\g\subseteq\Der(A)$ be a Lie algebra over $\K$. If the Euler derivation is an element of $\g$ and $d A_0=0$ then $H^k(\Omega_\g)=0$ for $k\geq 1$.
\end{proposition}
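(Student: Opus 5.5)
The plan is a Cartan homotopy argument with the Euler derivation $\dh\in\g$. By construction $L_{\dh}=d\iota_{\dh}+\iota_{\dh}d$ on $\Omegabg{}$. If $\omega\in\Omega_\g^k$ is closed, then $L_{\dh}\omega=d(\iota_{\dh}\omega)$. So it suffices to show two things: that $L_{\dh}$ acts invertibly on a suitable decomposition of $\Omega^k_\g$, and that $\iota_{\dh}$ maps $\Omega_\g^{k}$ into $\Omega_\g^{k-1}$. The second point matters because exactness must be exhibited inside $\Omega_\g$, not merely in $\Omegabg{}$.

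\textbf{Step 1: $L_{\dh}$ is a derivation of $\Omega_\g$ commuting with $d$.} Using $d^2=0$ one gets $L_{\dh}d=d\iota_{\dh}d=dL_{\dh}$. The product rule for $L_{\dh}$ is recalled in Section~\ref{sec:derivation.based.calculi}. On $\Omegag{0}=A$ we have $L_{\dh}a=\iota_{\dh}da=\dh(a)$. Hence for homogeneous $a_i\in A_{n_i}$,
\begin{align*}
L_{\dh}(a_0da_1\cdots da_k)=(n_0+n_1+\cdots+n_k)\,a_0da_1\cdots da_k .
\end{align*}
Thus $\Omega_\g^k$ is spanned by eigenvectors $a_0da_1\cdots da_k$ of $L_{\dh}$ with eigenvalue $N=\sum n_i$.

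\textbf{Step 2: use $dA_0=0$ to exclude the eigenvalue $0$.} If some $a_i$ with $i\geq1$ lies in $A_0$, then $da_i=0$ and the generator vanishes. So for $k\geq1$ every nonzero generator has $n_i\geq1$ for all $i\geq 1$, which gives $N\geq k\geq1$. Let $\Omega^k_{(N)}$ be the span of generators of total degree $N$. Then $\Omega_\g^k=\sum_{N\geq k}\Omega^k_{(N)}$, and $L_{\dh}$ acts on $\Omega^k_{(N)}$ as multiplication by $N$.

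This sum is direct. A relation $\sum_N\omega_N=0$ with $\omega_N\in\Omega^k_{(N)}$ can be hit repeatedly with $L_{\dh}$, giving $\sum_N N^j\omega_N=0$ for all $j$. A Vandermonde argument, valid since $\operatorname{char}\K=0$, then gives $\omega_N=0$ for each $N$. Also $d$ preserves $N$, because $d(a_0da_1\cdots da_k)=da_0da_1\cdots da_k$.

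\textbf{Step 3: conclude.} Let $\omega\in\Omega^k_\g$ with $k\geq1$ be closed, and write $\omega=\sum_N\omega_N$. Since $d$ preserves the decomposition, each $\omega_N$ is closed. Then
\begin{align*}
\omega_N=\tfrac1N L_{\dh}\omega_N=\tfrac1N d(\iota_{\dh}\omega_N).
\end{align*}
It remains to check $\iota_{\dh}\omega_N\in\Omega_\g^{k-1}$. The derivation rule for $\iota_{\dh}$ gives $\iota_{\dh}(a_0da_1\cdots da_k)=\sum_i(-1)^{i-1}n_i\,a_0da_1\cdots a_i\cdots da_k$. Each term lies in $\Omega_\g^{k-1}$, since $\Omega_\g$ is an algebra containing $A$ and $dA$. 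Hence $\omega=d\eta$ with $\eta=\sum_N\frac1N\iota_{\dh}\omega_N\in\Omega^{k-1}_\g$, and so $H^k(\Omega_\g)=0$.

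The main obstacle is expected to be the bookkeeping in Step 2. One has to make sure the eigenspace decomposition is well defined on $\Omega_\g$, because the same form may have several presentations. The Vandermonde argument with $L_{\dh}$ is what resolves this.
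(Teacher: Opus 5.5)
Your proposal is correct and follows the same route as the paper. Both arguments use the Cartan identity $L_{\dh}=d\iota_{\dh}+\iota_{\dh}d$ with the Euler derivation, which acts on generators $a_0da_1\cdots da_k$ by the total degree $N\geq 1$ (forced by $dA_0=0$), and then invert $N$ using characteristic zero. Your Step 2 and your check that $\iota_{\dh}$ preserves $\Omega_\g$ make explicit two points the paper's proof leaves implicit: that a general closed form splits into closed $L_{\dh}$-eigencomponents (the paper only treats eigenforms), and that the primitive lies in $\Omega_\g^{k-1}$ and not merely in $\Omegabg{k-1}$.
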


\begin{proof}
    Write 
    \begin{align*}
        A = \bigoplus_{k\geq 0}A_k
    \end{align*}
    and let $\dh$ denote the Euler derivation, satisfying $\dh(a)=ka$ for $a\in A_k$. It follows that $\iota_{\dh}(da) = da(\dh)=\dh(a)=ka$ and the Lie derivative becomes
    \begin{align*}
        &L_{\dh}(a) = (d\circ\iota_{\dh}+\iota_{\dh}\circ d)(a)=\iota_{\dh}(da)=ka\\
        &L_{\dh}(da) = (d\circ\iota_{\dh}+\iota_{\dh}\circ d)(da) = kda.
    \end{align*}
    Hence, if $a_l\in A_{k_l}$ for $0\leq l\leq n$ then
    \begin{align*}
        L_{\dh}(a_0da_1da_2\cdots da_n) = (k_0+\cdots+k_n)a_0da_1da_2\cdots da_n,
    \end{align*}
    since $L_{\dh}(\omega\eta)=L_{\dh}(\omega)\eta+\omega L_{\dh}(\eta)$, and it is clear that $\Omega^n_\g$ can be generated by such elements. Let $0\neq\omega\in\Omega^n_{\g}$ (for $n\geq 1$) such that $L_{\dh}(\omega)=m\omega$, and it follows that $m\geq 1$ since $dA_0=0$. If $d\omega=0$ then
    \begin{align*}
        m\omega = L_{\dh}(\omega) = (d\circ\iota_{\dh}+\iota_{\dh}\circ d)(\omega)
         = d(\iota_{\dh}\omega)
    \end{align*}
    and since $\K$ has characteristic $0$ it follows that
    \begin{align*}
        \omega = \frac{1}{m}d(\iota_{\dh}\omega)
    \end{align*}
    showing that every closed form is exact. One concludes that $H^k(\Omega_\g)=0$ for $k\geq 1$.
\end{proof}

\noindent
The conditions in Proposition~\ref{prop:Omega.cohomology} is satisfied for the Lie algebra $\D_2$ and, furthermore, one can easily compute $H^0(\Omega_{\D_2})$.

\begin{corollary}
    If $Q=(Q_0,Q_1)$ is a quiver then
    \begin{align*}
        &H^0(\Omega_{\D_2}) = \K\angles{Q_0}\\
        &H^k(\Omega_{\D_2}) = 0\quad\text{for }k\geq 1.
    \end{align*}
\end{corollary}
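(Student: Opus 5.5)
The plan has two parts: check that Proposition~\ref{prop:Omega.cohomology} applies, which gives the statement for $k\geq 1$, and then compute the kernel of $d_0$ directly.

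\textbf{The case $k\geq 1$.} The path algebra $\KQ=\bigoplus_{k\geq 0}\KQ_k$ is $\naturals$-graded by path length, with $\KQ_0$ spanned by $Q_0$. We must verify two hypotheses of Proposition~\ref{prop:Omega.cohomology}.
\begin{itemize}
\item First, $d\KQ_0=0$. This holds because every $D_{\alpha,a}$ vanishes on vertices, so $du=0$ for $u\in Q_0$.
\item Second, the Euler derivation lies in $\D_2$. We claim that $\dh=\sum_{\alpha\in Q_1}D_{\alpha,\alpha}$. Indeed, $D_{\alpha,\alpha}$ applied to a path $\alpha_1\cdots\alpha_n$ returns the path multiplied by the number of occurrences of $\alpha$. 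Summing over $\alpha$ gives $n\,\alpha_1\cdots\alpha_n$. Both sides vanish on vertices, so they agree on the basis $\P(Q)$.
\end{itemize}
Proposition~\ref{prop:Omega.cohomology} then gives $H^k(\Omega_{\D_2})=0$ for $k\geq 1$.

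\textbf{The case $k=0$.} Here $H^0=\ker d_0=\{a\in\KQ:\d a=0\ \forall\,\d\in\D_2\}$, and we must show this equals $\K\angles{Q_0}$. One inclusion is already noted: every derivation in $\D_2$ kills vertices.

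For the converse, take $a\in\ker d_0$ and write $a=\sum_k a_k$ in homogeneous components. Applying the Euler derivation gives $0=\dh(a)=\sum_k k a_k$. Since $\dh$ preserves degrees and $\K$ has characteristic zero, this forces $a_k=0$ for every $k\geq 1$. Hence $a\in\KQ_0=\K\angles{Q_0}$.

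This last step is the only one requiring care. The point is that a single derivation, $\dh$, already detects every component of positive degree, so no separate analysis is needed for quivers with cycles, where $\KQ$ is infinite dimensional.
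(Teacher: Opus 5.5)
Your proposal is correct and follows the paper's proof essentially step for step. You check $d\KQ_0=0$ and $\dh=\sum_{\alpha\in Q_1}D_{\alpha,\alpha}\in\D_2$ to invoke Proposition~\ref{prop:Omega.cohomology}, then use $0=da(\dh)=\sum_k k\,a_k$ together with characteristic zero to force $a\in\K\angles{Q_0}$; you also state the trivial inclusion $\K\angles{Q_0}\subseteq\ker d_0$ explicitly, which the paper leaves implicit.
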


\begin{proof}
    The path algebra $\KQ$ is $\naturals$-graded with $A_0=\K\angles{Q_0}$ and since $\d u=0$ for all $\d\in\D_2$ and $u\in Q_0$ it follows that $dA_0=0$. Moreover, the Euler derivation $\dh$ is an element of $\D_2$ since
    \begin{align*}
        \dh = \sum_{\alpha\in Q_1}D_{\alpha,\alpha}\in\D_2,
    \end{align*}
    and it follows from Proposition~\ref{prop:Omega.cohomology} that $H^k(\Omega_{\D_2})=0$ for $k\geq 1$. 

    Now, assume that $da=0$ and write
    \begin{align*}
        a = a_0 + \sum_{p\in\Pp(Q)}\lambda_p p
    \end{align*}
    with $a_0\in\K\angles{Q_0}$ and $\lambda_p\in\K$. Since $da_0=0$ one finds that
    \begin{align*}
        0 = da(\dh) = \sum_{p\in\Pp(Q)}\lambda_p dp(\dh)
        =\sum_{p\in\Pp(Q)}\lambda_p l(p)p
    \end{align*}
    and since $\P(Q)$ is a basis for $\KQ$ one necessarily has that $\lambda_p l(p)=0$ for all $p\in\Pp(Q)$ implying that $\lambda_p=0$ since $l(p)\geq 1$ and the characteristic of $\K$ is zero. One concludes that 
    \begin{equation*}
        H^0(\D_2) = \{a\in\KQ:da = 0\} = \K\angles{Q_0}.\qedhere
    \end{equation*}
\end{proof}

\subsection{Separating quivers}

\noindent
Comparing with differential geometry, where each vector bundle corresponds to a finitely generated projective module over the algebra of functions, it is natural to ask if $\OmegaoneD$ is a projective $\KQ$-module.
It turns out that there is an easy condition on the quiver that is equivalent to $\OmegaoneD$ being projective as a $\KQ$-bimodule (cf. Corollary~\ref{cor:Q.separating.equiv.projective}). 
This condition is based on the notion of separating edges which we introduce in the following definition.

\begin{definition}\label{def:separating.edge}
    Let $Q=(Q_0,Q_1)$ be a quiver. An edge $\alpha\in Q_1$ is called a separating edge if the bimodule homomorphism
    \begin{align*}
        \theta_\alpha: P_{\alpha}&\to\Hom_{\K}\paraa{s(\alpha)\KQ t(\alpha),\KQ}\\
        p\otimesK q&\mapsto \theta_\alpha(p\otimesK q)(r)=prq
    \end{align*}
    is injective. An edge that is not a separating edge is called a non-separating edge. Moreover, the quiver is called a separating quiver if each $\alpha\in Q_1$ is a separating edge.
\end{definition}

\noindent
Although it will turn out that many interesting quivers are separating (e.g. acyclic quivers), there are simple examples of quivers that are not separating. 

\begin{example}
    Let $Q$ be the quiver
    \[
    \begin{tikzcd}
        \bullet\ar[loop,swap,"\alpha"]
    \end{tikzcd}
    \]
    Then $\KQ=\K[\alpha]$ is the polynomial algebra in one variable, and any elementary tensor of $P_{\alpha}$ is of the form $\alpha^n\otimesK\alpha^m$, $n,m\geq0$. For $m\neq n$, the nonzero element 
    \[
    \alpha^n\otimesK\alpha^m-\alpha^m\otimesK\alpha^n\in P_{\alpha}
    \]
    satisfies
    \[
    \theta_\alpha(\alpha^n\otimesK\alpha^m-\alpha^m\otimesK\alpha^n)(\mid)=\theta_\alpha(\alpha^n\otimesK\alpha^m-\alpha^m\otimesK\alpha^n)(\alpha^k)=0,
    \]
    and thus $0\neq\alpha^n\otimesK\alpha^m-\alpha^m\otimesK\alpha^n\in\ker(\theta_\alpha)$. It follows that $Q$ is not a separating quiver.
\end{example}

\noindent
At first glance, Definition~\ref{def:separating.edge} might seem unnatural, but it is easy to see that it is in fact equivalent to the map
\begin{align*}
    \phi_{\alpha}:P_{\alpha}&\to \angles{d\alpha}\\
    p\otimesK q &\mapsto p(d\alpha)q
\end{align*}
being a bimodule isomorphism. Hence, since $P_{\alpha}$ is a projective bimodule, it follows that 
\begin{align*}
    \OmegaoneD = \bigoplus_{\alpha\in Q_1}\angles{d\alpha}
\end{align*}
is a projective bimodule if $Q$ is a separating quiver. In the following, we will show that this is in fact an equivalence and, furthermore, find an equivalent condition on the quiver itself. Due to the decomposition above, 
we start by showing that $\alpha$ is a separating edge if and only if $\angles{d\alpha}$ is a projective bimodule. 

Recall that $B_{\alpha}=\{(p,q):t(p)=s(\alpha),s(q)=t(\alpha)\}$ and that $\{p\otimesK q:(p,q)\in B_{\alpha}\}$ is a basis of $P_{\alpha}$. Thus, define the $\K$-linear map $\phi_{\alpha}:P_{\alpha}\to\angles{d\alpha}$ by setting
\begin{align*}
    \phi_{\alpha}(p\otimesK q) = p(d\alpha)q
\end{align*}
for $(p,q)\in B_{\alpha}$ and extending it linearly to $P_{\alpha}$; it follows that $\phi_{\alpha}$ is a surjective bimodule homomorphism.

To study the projectivity of $\angles{d\alpha}$, we consider the exact sequence of bimodules
\begin{align}\label{eq:Pa.da.exact.sequence}
        0\longrightarrow \ker(\phi_{\alpha})\longrightarrow P_{\alpha}
        \overset{\phi_{\alpha}}{\longrightarrow} \angles{d\alpha}\longrightarrow 0.
\end{align}
Clearly, if $\angles{d\alpha}$ is projective then the sequence splits and $P_{\alpha}\simeq\ker(\phi_{\alpha})\oplus\angles{d\alpha}$. Conversely, if the sequence splits then $\angles{d\alpha}$ is projective since it is a direct summand of the projective module $P_{\alpha}$. Hence, the splitting of \eqref{eq:Pa.da.exact.sequence} is equivalent to $\angles{d\alpha}$ being projective. Next, let us show that the splitting of the sequence forces $\phi_{\alpha}$ to be injective. To improve the readability of the proof, let us start with the following lemma. 

\begin{lemma}\label{lemma:idempotent.uu.otimes.vv}
    Let $Q=(Q_0,Q_1)$ be a finite quiver and let $u,v\in Q_0$. If $e\in u\KQ u\otimesK v\KQ v$ with $e^2=e$ then $e=0$ or $e=u\otimesK v$.
\end{lemma}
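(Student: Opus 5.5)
The plan is to use the path-length grading. The algebra $u\KQ u\otimesK v\KQ v$ is the tensor product of two $\naturals$-graded algebras. Its degree-zero part is spanned by $u\otimesK v$, which is the unit of this algebra. Concretely, $u\KQ u$ has basis the cycles at $u$ together with $u$ itself. So every element can be written as $e=\lambda\, u\otimesK v + e_+$, where $\lambda\in\K$ and $e_+$ is a combination of basis elements $p\otimesK q$ with $l(p)+l(q)\geq 1$. We grade by total length $l(p)+l(q)$. Multiplication is additive in this grading, because concatenation of nonzero path products adds lengths.

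Expanding $e^2=e$ gives
\begin{align*}
\lambda^2\,u\otimesK v + 2\lambda e_+ + e_+^2 = \lambda\,u\otimesK v + e_+ .
\end{align*}
Comparing degree-zero components, and noting that $e_+^2$ has no degree-zero part, gives $\lambda^2=\lambda$. Hence $\lambda\in\{0,1\}$, and the remaining equation reads $e_+^2=(1-2\lambda)e_+$, where $1-2\lambda=\pm1$.

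Now suppose $e_+\neq 0$ and let $m\geq 1$ be its lowest nonzero degree, with component $e_m$. The lowest degree appearing in $e_+^2$ is at least $2m>m$. So the degree-$m$ component of $e_+^2$ vanishes, while the right-hand side has degree-$m$ component $\pm e_m\neq 0$. This is a contradiction, so $e_+=0$. Therefore $e=0$ (when $\lambda=0$) or $e=u\otimesK v$ (when $\lambda=1$).

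The only point needing care is checking that the product on $u\KQ u\otimesK v\KQ v$ is compatible with the grading. The product is $(p\otimes q)(p'\otimes q')=pp'\otimes qq'$, and it respects the grading because a product of paths is either zero or a path whose length is the sum of the lengths. Characteristic zero and finiteness of $Q$ are not needed beyond ensuring that $\P(Q)$ is a basis and that $2\neq 0$, and the latter is not actually used.
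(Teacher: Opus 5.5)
Your proof is correct and is essentially the paper's argument: grade by total path length, obtain $\lambda^2=\lambda$ in degree zero, and use a lowest-degree comparison to kill the positive-degree part. The only difference is cosmetic: you handle $\lambda=1$ directly through $e_+^2=(1-2\lambda)e_+$, whereas the paper applies the $\lambda=0$ case to the complementary idempotent $u\otimesK v-e$ (in the paper's application the algebra sits inside $\KQ^e=\KQ\otimesK\KQop$, so the second factor multiplies as $q'q$ rather than $qq'$, which does not affect your argument).
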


\begin{proof}
    The algebra $A=u\KQ u\otimesK v\KQ v$ is graded by total path length, i.e. 
    \begin{align*}
        l(p\otimesK q)=l(p)+l(q)   
    \end{align*}
    and one writes 
    \begin{align*}
        e = e_0+e_1+\cdots e_N 
    \end{align*}
    for some $N\geq 0$, where $e_k$ is a homogeneous element with $l(e)=k$. From $e^2=e$ it follows that $e_0^2=e_0$ which implies that $e_0=0$ or $e_0=u\otimesK v$ since every element of degree zero can be written as $\lambda u\otimesK v$ for some $\lambda\in\K$.

    First, consider the case when $e_0=0$. Assume that there exists a smallest integer $d\geq 1$ such that $e_d\neq 0$, which implies that all terms in $e^2$ has degree at least $2d>d$. However, this contradicts $e^2=e$ and we conclude that no such $d$ can exist. Hence, $e_0=0$ implies that $e=0$. 

    Next, consider the case when $e_0=u\otimesK v$ and define $f=u\otimesK v-e$ implying that $f^2=f$ since $e(u\otimesK v)=(u\otimesK v)e=e$. Hence, $f$ is an idempotent with vanishing terms in degree zero, which implies by the previous argument that $f=0$ and we conclude that $e=u\otimesK v$. 
\end{proof}

\begin{lemma}\label{lemma:ker.phi.zero.direct.sum}
    If $P_{\alpha}=\ker(\phi_\alpha)\oplus M$ then $\ker(\phi_{\alpha})=0$.
\end{lemma}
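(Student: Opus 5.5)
The plan is to turn the direct sum decomposition into an idempotent in $\operatorname{End}(P_\alpha)$ and then apply Lemma~\ref{lemma:idempotent.uu.otimes.vv}. Let $\pi:P_\alpha\to P_\alpha$ be the projection onto $\ker(\phi_\alpha)$ along $M$. It is a bimodule endomorphism with $\pi^2=\pi$, i.e.\ an endomorphism of the left $\KQ^e$-module $P_\alpha=\KQ^e(s(\alpha)\otimesK t(\alpha))$. Write $u=s(\alpha)$, $v=t(\alpha)$ and $\epsilon=u\otimesK v$, an idempotent of $\KQ^e$. The standard identification $\operatorname{End}_{\KQ^e}(\KQ^e\epsilon)\cong(\epsilon\KQ^e\epsilon)^{\mathrm{op}}$ sends $\pi$ to $e=\pi(\epsilon)\in\epsilon\KQ^e\epsilon$. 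Since $\pi^2=\pi$, we get $e^2=e$ in this corner algebra, the opposite order being irrelevant for idempotency.

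Next, compute the corner algebra. We have $\epsilon\KQ^e\epsilon=u\KQ u\otimesK (v\KQop v)$, and $v\KQop v$ is just $v\KQ v$ with the opposite product. The grading argument in Lemma~\ref{lemma:idempotent.uu.otimes.vv} only uses that the algebra is graded by total length with degree-zero part $\K(u\otimes v)$. That remains true with the opposite multiplication in the second factor, so the lemma applies and gives $e=0$ or $e=\epsilon$. In the first case $\pi(\epsilon)=0$, and because $\epsilon$ generates $P_\alpha$, $\pi=0$ and so $\ker(\phi_\alpha)=\operatorname{im}\pi=0$. In the second case $\pi=\mathrm{id}$, so $\ker(\phi_\alpha)=P_\alpha$ and $M=0$.

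The second case must be excluded. It would mean $\phi_\alpha=0$, but $\phi_\alpha(u\otimesK v)=d\alpha\neq 0$, since $d\alpha(D_{\alpha,\alpha})=\alpha\neq0$. Hence $\ker(\phi_\alpha)=0$.

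The main obstacle is the bookkeeping in the identification of bimodule endomorphisms of $P_\alpha$ with the corner algebra $\epsilon\KQ^e\epsilon$. I must check that $\pi(\epsilon)$ really lies in $u\KQ u\otimesK v\KQ v$, which holds because $\pi(\epsilon)=\pi(\epsilon\epsilon\epsilon)=\epsilon\pi(\epsilon)\epsilon$. I must also check that the opposite multiplication on the second tensor factor does not spoil the use of Lemma~\ref{lemma:idempotent.uu.otimes.vv}; as noted above, only the length grading matters.
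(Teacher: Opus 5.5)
Your proposal is correct and follows the paper's proof essentially step for step. The paper also takes the projection $\pi$ onto $\ker(\phi_\alpha)$, sets $e=\pi(s(\alpha)\otimesK t(\alpha))$, applies Lemma~\ref{lemma:idempotent.uu.otimes.vv} to get $e=0$ or $e=s(\alpha)\otimesK t(\alpha)$, and rules out the latter because $\phi_\alpha(s(\alpha)\otimesK t(\alpha))=d\alpha\neq 0$. Your remark that the corner algebra is really $u\KQ u\otimesK(v\KQ v)^{\mathrm{op}}$, and that the grading argument does not depend on the opposite product, covers a point the paper passes over silently, so your version is slightly more careful.
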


\begin{proof}
    Set $p=s(\alpha)\otimesK t(\alpha)$ such that $P_{\alpha}=\KQ^ep$ and
    assume that there exists a bimodule $M$ such that $P_{\alpha}=\ker(\phi_{\alpha})\oplus M$. Furthermore, let 
    \begin{align*}
        \pi:P_{\alpha}\to P_{\alpha}   
    \end{align*}
    be the projection onto $\ker(\phi_{\alpha})$ such that $\pi\circ\pi=\pi$ and $\im(\pi)=\ker(\phi_{\alpha})$. Setting $e=\pi(p)$ it follows that $e^2=e$ and $e\in p\KQ^e p$, and we note that 
    $\ker(\phi_{\alpha})=\KQ^ee$ since
    \begin{align*}
        &xe = x\pi(p) = \pi(xp) = \pi(x) = x\\
        &\phi_\alpha(ae) = \phi_\alpha(a\pi(p)) = \phi_{\alpha}(\pi(ap)) = 0
    \end{align*}
    for $x\in\ker(\phi_\alpha)$ and $a\in\KQ^e$. Next, we will show that $e=0$. 

    Since $p\KQ^ep=s(\alpha)\KQ s(\alpha)\otimesK t(\alpha)\KQ t(\alpha)$ and $e^2=e$ one can use Lemma~\ref{lemma:idempotent.uu.otimes.vv} to conclude that $e=0$ or $e=s(\alpha)\otimesK t(\alpha)=p$.
    However, since $\phi_{\alpha}(p)=d\alpha\neq 0$, implying that $\ker(\phi_\alpha)\neq P_{\alpha}$, one must necessarily have $e=0$ implying that $\ker(\phi_{\alpha})=0$. 
\end{proof}

\noindent
The above results tells us that if the sequence \eqref{eq:Pa.da.exact.sequence} splits then $\phi_{\alpha}$ is a bimodule isomorphism. Conversely, it is clear that if $\phi_{\alpha}$ is a bimodule isomorphism, then the sequence splits. Thus, $\angles{d\alpha}$ is projective if and only if $\phi_{\alpha}$ is a bimodule isomorphism. Let us now show that projectivity of $\angles{d\alpha}$ is equivalent to $\alpha$ being a separating edge.

\begin{proposition}\label{prop:separating.equiv.projective}
    Let $Q=(Q_0,Q_1)$ be a quiver and let $\alpha\in Q_1$. Then the following are equivalent
    \begin{enumerate}
        \item $\alpha$ is a separating edge,
        \item $\angles{d\alpha}$ is projective as a $\KQ$-bimodule.
    \end{enumerate}
\end{proposition}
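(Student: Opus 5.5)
By the discussion preceding the statement (Lemma~\ref{lemma:ker.phi.zero.direct.sum} together with the splitting argument for \eqref{eq:Pa.da.exact.sequence}), $\angles{d\alpha}$ is projective if and only if $\phi_\alpha:P_\alpha\to\angles{d\alpha}$ is injective. The proposition therefore reduces to showing
\begin{align*}
    \ker(\phi_\alpha)=\ker(\theta_\alpha).
\end{align*}
The plan is to compute $\phi_\alpha(x)$ for a general element $x=\sum_{(p,q)\in B_\alpha}\lambda_{(p,q)}p\otimesK q$ by evaluating it on the basis derivations of $\D_2$, and to compare the result with $\theta_\alpha(x)$.

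First, for $\beta\in Q_1$ and $r\in\Ppar{\beta}$, the definition of $d\alpha$ gives
\begin{align*}
    \paraa{p(d\alpha)q}(D_{\beta,r}) = p\,D_{\beta,r}(\alpha)\,q = \delta_{\alpha,\beta}\,prq.
\end{align*}
Hence $\phi_\alpha(x)$ vanishes on every $D_{\beta,r}$ with $\beta\neq\alpha$. On $D_{\alpha,r}$ with $r\in\Ppara$ it equals $\sum\lambda_{(p,q)}prq=\theta_\alpha(x)(r)$. Since $\{D_{\beta,r}\}$ is a basis of $\D_2$ and forms are $\K$-linear, $\phi_\alpha(x)=0$ if and only if $\theta_\alpha(x)(r)=0$ for all $r\in\Ppara$.

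Second, I would show that $\theta_\alpha(x)$ is determined by its values on $\Ppara$. The space $s(\alpha)\KQ t(\alpha)$ is spanned exactly by the paths from $s(\alpha)$ to $t(\alpha)$, and this set equals $\Ppara$. The one point to check is the case where one or both of these paths are trivial, for example $s(\alpha)=t(\alpha)$ for a loop; then vertex $r=s(\alpha)$ also lies in $s(\alpha)\KQ t(\alpha)$, and it lies in $\Ppara$ as well, since $\Ppara$ contains length-zero paths when the endpoints agree. So the vanishing of $\theta_\alpha(x)$ on the spanning set $\Ppara$ is equivalent to $\theta_\alpha(x)=0$. Combining the two steps gives $\ker\phi_\alpha=\ker\theta_\alpha$.

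Therefore $\alpha$ is separating if and only if $\phi_\alpha$ is injective, which by the preceding lemmas holds if and only if $\angles{d\alpha}$ is projective. The only genuinely nontrivial input is Lemma~\ref{lemma:ker.phi.zero.direct.sum}, which is already established, so the main care needed is the bookkeeping in the second step: making sure the test elements $r$ for $\theta_\alpha$ coincide with the paths $r$ for which $D_{\alpha,r}$ is a basis derivation.
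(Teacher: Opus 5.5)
Your proposal is correct and matches the paper's proof: you reduce projectivity of $\angles{d\alpha}$ to injectivity of $\phi_\alpha$ via Lemma~\ref{lemma:ker.phi.zero.direct.sum}, then show $\ker(\phi_\alpha)=\ker(\theta_\alpha)$ by evaluating on the basis derivations $D_{\beta,r}$, which only detect the values $prq$ for $r\in\Ppara$. You also check explicitly that $\Ppara$ is exactly the path basis of $s(\alpha)\KQ t(\alpha)$, including the vertex when $\alpha$ is a loop; the paper leaves this detail implicit.
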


\begin{proof}
    As already noted, Lemma~\ref{lemma:ker.phi.zero.direct.sum} implies that projectivity of $\angles{d\alpha}$ is equivalent to the injectivity of $\phi_{\alpha}$. Next, let us show that $\ker(\phi_{\alpha})=\ker(\theta_{\alpha})$; namely,
    \begin{align*}
        \ker(\phi_{\alpha}) 
        &= \{\sum_{(p,q)\in B_{\alpha}}\lambda_{(p,q)}p\otimesK q:\sum_{(p,q)\in B_{\alpha}}\lambda_{(p,q)}p(d\alpha)q=0\}\\
        & = \{\sum_{(p,q)\in B_{\alpha}}\lambda_{(p,q)}p\otimesK q:
        \sum_{(p,q)\in B_{\alpha}}\lambda_{(p,q)}prq=0\quad\forall r\in\Ppara\}
    \end{align*}
    which equals $\ker(\theta_{\alpha})$, implying that $\angles{d\alpha}$ is projective if and only if $\ker(\theta_{\alpha})=0$, i.e. if $\alpha$ is a separating edge. 
\end{proof}

\begin{corollary}\label{cor:Q.separating.equiv.projective}
    If $Q$ is a quiver then $\OmegaoneD$ is a projective $\KQ$-bimodule if and only if $Q$ is a separating quiver. Consequently, if $\OmegaoneD$ is projective then 
    \begin{align*}
        \OmegaoneD\simeq\bigoplus_{\alpha\in Q_1}P_{\alpha}.
    \end{align*}
\end{corollary}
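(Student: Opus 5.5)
The plan is to combine the bimodule decomposition of Proposition~\ref{prop:Omega.decomp.dalpha} with the edgewise criterion of Proposition~\ref{prop:separating.equiv.projective}. By Proposition~\ref{prop:Omega.decomp.dalpha},
\begin{align*}
    \OmegaoneD\simeq\bigoplus_{\alpha\in Q_1}\angles{d\alpha}
\end{align*}
as $\KQ$-bimodules, or equivalently as left $\KQ^e$-modules. The key standard fact is that a direct sum of modules is projective if and only if each summand is projective. One direction holds because a direct sum of projectives is projective. The other holds because each summand is a direct summand of the whole module, hence projective whenever the whole is. Since $Q$ is finite, the sum is finite, although this is not actually needed.

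For the first direction, assume $Q$ is separating. Then every $\alpha\in Q_1$ is a separating edge, so by Proposition~\ref{prop:separating.equiv.projective} each $\angles{d\alpha}$ is projective. Hence $\OmegaoneD$ is projective.

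For the converse, assume $\OmegaoneD$ is projective. Each $\angles{d\alpha}$ is a direct summand of $\OmegaoneD$ via the isomorphism $\phi$ of Proposition~\ref{prop:Omega.decomp.dalpha}, so each $\angles{d\alpha}$ is projective. Applying Proposition~\ref{prop:separating.equiv.projective} again, every $\alpha$ is separating, so $Q$ is a separating quiver.

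For the final claim, assume $\OmegaoneD$ is projective. Then every $\angles{d\alpha}$ is projective, and the discussion preceding Proposition~\ref{prop:separating.equiv.projective}, built on Lemma~\ref{lemma:ker.phi.zero.direct.sum}, shows that $\phi_\alpha:P_\alpha\to\angles{d\alpha}$ is a bimodule isomorphism. Taking the direct sum of these isomorphisms and composing with $\phi$ gives
\begin{align*}
    \bigoplus_{\alpha\in Q_1}P_{\alpha}\overset{\oplus\phi_\alpha}{\longrightarrow}\bigoplus_{\alpha\in Q_1}\angles{d\alpha}\overset{\phi}{\longrightarrow}\OmegaoneD,
\end{align*}
which is a bimodule isomorphism. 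There is no real obstacle here. The only point requiring care is confirming that the decomposition of Proposition~\ref{prop:Omega.decomp.dalpha} is an internal direct sum of sub-bimodules, so that each $\angles{d\alpha}$ is genuinely a direct summand; this is exactly what that proposition provides.
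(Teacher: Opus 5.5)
Your proposal is correct and follows essentially the same route as the paper: both use the decomposition $\OmegaoneD\simeq\bigoplus_{\alpha\in Q_1}\angles{d\alpha}$ (Proposition~\ref{prop:Omega.decomp.dalpha}), the fact that a direct sum is projective exactly when every summand is, and the edgewise criterion of Proposition~\ref{prop:separating.equiv.projective}. For the isomorphism with $\bigoplus_{\alpha}P_\alpha$, both invoke the splitting of \eqref{eq:Pa.da.exact.sequence} together with Lemma~\ref{lemma:ker.phi.zero.direct.sum}.
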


\begin{proof}
    By Proposition~\ref{prop:Omega.decomp.dalpha}, $\OmegaoneD$ decomposes as
    \begin{align*}
        \OmegaoneD=\bigoplus_{\alpha\in Q_1}\angles{d\alpha}
    \end{align*}
    implying that $\OmegaoneD$ is projective if and only if $\angles{d\alpha}$ is projective for all $\alpha\in Q_1$. By Proposition~\ref{prop:separating.equiv.projective} this is equivalent to $\alpha$ being a separating edge for all $\alpha\in Q_1$. Moreover, if $\angles{d\alpha}$ is projective, then \eqref{eq:Pa.da.exact.sequence} splits and 
    Lemma~\ref{lemma:ker.phi.zero.direct.sum} implies that $\angles{d\alpha}\simeq P_{\alpha}$ for all $\alpha\in Q_1$.
\end{proof}

\noindent
Having shown that separating quivers precisely correspond to those quivers where $\OmegaoneD$ is a projective bimodule, we ask if one may characterize the property of being a separating quiver in terms of the underlying graph? We start by showing that edges that are not separating has to be part of a cycle.

\begin{proposition}\label{prop:alpha.not.cycle.separating}
    Let $Q=(Q_0,Q_1)$ be a quiver. If $\alpha\in Q_1$ is not part of a cycle then $\alpha$ is a separating edge. 
\end{proposition}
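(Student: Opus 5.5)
To prove that $\alpha$ is separating, we show directly that $\theta_\alpha$ is injective, evaluating only at the single element $r=\alpha\in s(\alpha)\KQ t(\alpha)$. Take
\begin{align*}
    x=\sum_{(p,q)\in B_\alpha}\lambda_{(p,q)}\,p\otimesK q\in\ker(\theta_\alpha).
\end{align*}
Then in particular
\begin{align*}
    \theta_\alpha(x)(\alpha)=\sum_{(p,q)\in B_\alpha}\lambda_{(p,q)}\,p\alpha q=0 .
\end{align*}
Each $p\alpha q$ is a nonzero path, since $t(p)=s(\alpha)$ and $s(q)=t(\alpha)$. Because $\P(Q)$ is a basis of $\KQ$, it therefore suffices to show that the map $B_\alpha\to\P(Q)$, $(p,q)\mapsto p\alpha q$, is injective. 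Then all $\lambda_{(p,q)}$ vanish and $x=0$.

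The key step is injectivity, and this is where the hypothesis enters. First we claim that $\alpha$ occurs exactly once in any path of the form $p\alpha q$. Suppose $\alpha$ occurs in $p$, say $p=p_1\alpha p_2$. Then $\alpha p_2$ is a path from $s(\alpha)$ to $t(p)=s(\alpha)$, so it is a cycle containing $\alpha$, which is a contradiction. Similarly, if $\alpha$ occurs in $q=q_1\alpha q_2$, then $q_1\alpha$ is a path from $s(q)=t(\alpha)$ to $t(\alpha)$. It has positive length, so it is a cycle containing $\alpha$, again a contradiction.

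Now suppose $p\alpha q=p'\alpha q'$ as sequences of edges. Since $\alpha$ appears exactly once in each word, it sits at the same position in both, namely position $l(p)+1=l(p')+1$. Hence $p$ and $p'$ are equal as edge sequences, and likewise $q$ and $q'$. When the length is zero, we still need $p=p'$ as vertices; this follows because $p=s(\alpha)=p'$, and similarly $q=t(\alpha)=q'$.

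The only subtle point is this bookkeeping for length-zero paths together with the cycle argument. The argument ensures that even a loop-free situation with a repeated vertex cannot produce two distinct decompositions. Everything else is routine.
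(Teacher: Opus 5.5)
Your proof is correct and follows essentially the same route as the paper: evaluate $\theta_\alpha(x)$ at $r=\alpha$, use the no-cycle hypothesis to get $\alpha\notin p$ and $\alpha\notin q$ for all $(p,q)\in B_\alpha$, conclude that the paths $p\alpha q$ are pairwise distinct, and then use that $\P(Q)$ is a basis to force every $\lambda_{(p,q)}=0$. You simply make explicit what the paper leaves implicit, namely the cycles $\alpha p_2$ and $q_1\alpha$ and the position-of-$\alpha$ argument showing that $(p,q)\mapsto p\alpha q$ is injective.
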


\begin{proof}
    Let
    \begin{align*}
        x = \sum_{(p,q)\in B_{\alpha}}\lambda_{(p,q)}p\otimesK q\in P_{\alpha}
    \end{align*}
    and assume that $x\in\ker(\theta_\alpha)$, which implies that
    \begin{align*}
        \sum_{(p,q)\in B_{\alpha}}\lambda_{(p,q)}p\alpha q=0.
    \end{align*}
    If $\alpha$ is not part of a cycle then $\alpha\notin p$ and $\alpha\notin q$ for all $(p,q)\in B_\alpha$ (otherwise there exists a cycle containing $\alpha$). Thus, all paths in the sum are pairwise distinct, implying that $\lambda_{(p,q)}=0$ for all $(p,q)\in B_{\alpha}$, since $\P(Q)$ is a basis of $\KQ$. We conclude that $\ker(\theta_\alpha)=0$ and that $\alpha$ is a separating edge. 
\end{proof}

\begin{corollary}
    If $Q$ is a finite acyclic quiver then $\OmegaoneD$ is a projective $\KQ$-bimodule. 
\end{corollary}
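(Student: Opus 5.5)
This follows by combining Proposition~\ref{prop:alpha.not.cycle.separating} with Corollary~\ref{cor:Q.separating.equiv.projective}. First we check that $Q$ is a separating quiver. Let $\alpha\in Q_1$ be arbitrary. Since $Q$ is acyclic, it contains no cycles at all, so in particular $\alpha$ is not part of a cycle. Proposition~\ref{prop:alpha.not.cycle.separating} then says that $\alpha$ is a separating edge. As $\alpha$ was arbitrary, every edge of $Q$ is separating, which is exactly the definition of a separating quiver.

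Next we apply Corollary~\ref{cor:Q.separating.equiv.projective}. It states that $\OmegaoneD$ is a projective $\KQ$-bimodule if and only if $Q$ is separating, so $\OmegaoneD$ is projective. The corollary also gives the explicit decomposition
\begin{align*}
    \OmegaoneD\simeq\bigoplus_{\alpha\in Q_1}P_{\alpha}.
\end{align*}
Each $P_{\alpha}=\KQ^e(s(\alpha)\otimesK t(\alpha))$ is a cyclic projective bimodule, and finiteness of $Q$ makes this a finite direct sum. Hence $\OmegaoneD$ is in fact finitely generated projective, although the statement only requires projectivity.

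There is no real obstacle here. The one point to confirm is that the hypothesis of Proposition~\ref{prop:alpha.not.cycle.separating} holds for every edge, and this is immediate from acyclicity. The finiteness assumption is not needed for projectivity itself, since the earlier results apply to any quiver; it only matters for the remark about finite generation.
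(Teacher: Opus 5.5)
Your argument is correct and is exactly the paper's proof: acyclicity means no edge lies on a cycle, so Proposition~\ref{prop:alpha.not.cycle.separating} makes every edge separating, and Corollary~\ref{cor:Q.separating.equiv.projective} then yields projectivity. Your closing remark that finiteness is not needed is slightly off within the paper's framework: all quivers are assumed finite throughout, and finiteness of $Q_0$ is what makes $\KQ$ unital, as the derivation-based setup requires. That side comment does not affect the proof.
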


\begin{proof}
    If $Q$ is an acyclic quiver then no edge is part of a cycle. Proposition~\ref{prop:alpha.not.cycle.separating} then gives that the quiver is separating, implying that $\OmegaoneD$ is a projective bimodule by Corollary~\ref{cor:Q.separating.equiv.projective}.
\end{proof}

\noindent
Proposition~\ref{prop:alpha.not.cycle.separating} tells us that non-separating edges can only occur when the edge is part of a cycle. As already mentioned, we will show that edges that are not separating will be parts of a unique cycle. To prove this, we need a few concepts and results from the combinatorics of words (see e.g. \cite{l:combinatorics.on.words}). Let us start by recalling the period of a word.

\begin{definition}
Let $Q$ be a quiver and let 
\begin{equation*}
    p=\alpha_1\alpha_2\cdots\alpha_n    
\end{equation*}
be a path of length $n\geq 1$. An integer $1\leq N\leq n$ is called a period of $p$ if $\alpha_{i+N}=\alpha_i$ for every $1\leq i\leq n-N$. The smallest such integer is called the minimal period of $p$.
\end{definition}

\noindent
Note that if $l(p)=n$ then $p$ always has period $n$; however, the minimal period might be smaller.

Let $Q=(Q_0,Q_1)$ be a quiver and let $\alpha\in Q_1$ such that $\alpha$ is part of a cycle. The idea in the following is to show that if one can construct a path parallel to the edge $\alpha$ with an arbitrarily long period then $\alpha$ is a separating edge. Since $\alpha$ is part of a cycle, there exist arbitrarily long paths parallel to $\alpha$ but, the obstruction occurs when $\alpha$ is part of a unique cycle in which case the period of the path is bounded. As soon as $\alpha$ is part of at least two distinct cycles, one may construct paths of arbitrarily long periods.

Let us start with the following result, showing that paths of long periods separate short paths.

\begin{lemma}\label{lemma:no.periods.pi.qi.equal}
    Let $N>0$ be an integer and let $w\in\Pp(Q)$ be a path such that if $n$ is a period of $w$ then $n>N$. If $p_1,p_2,q_1,q_2\in\P(Q)$ such that 
    \begin{align*}
        \max\{l(p_1),l(p_2),l(q_1),l(q_2)\}\leq N   
    \end{align*}
    and
    \begin{align*}
        p_1wq_1 = p_2wq_2\neq 0
    \end{align*}
    then $(p_1,q_1)=(p_2,q_2)$.
\end{lemma}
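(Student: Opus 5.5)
The plan is to compare the two paths $p_1wq_1$ and $p_2wq_2$ letter by letter and show that a mismatch in the lengths of the prefixes would force $w$ to have a small period. Since both products are nonzero, they are genuine paths, i.e. words in the alphabet $Q_1$. Equality in $\KQ$ of two nonzero paths means they are the same path, hence the same word. Write $a_i=l(p_i)$ and $b_i=l(q_i)$, so that $a_1+l(w)+b_1=a_2+l(w)+b_2$.

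The first step is to treat the trivial case. If $a_1=a_2$, then $b_1=b_2$. The word $p_1wq_1=p_2wq_2$ then determines $p_1=p_2$ as its prefix of length $a_1$, and $q_1=q_2$ as its suffix of length $b_1$. We must also handle paths of length zero, i.e. vertices. If $a_1=a_2=0$, then $p_1$ and $p_2$ are both vertices, and since the product is nonzero, $p_i=s(w)$. Similarly, if $b_1=b_2=0$, then $q_i=t(w)$. So in every subcase $(p_1,q_1)=(p_2,q_2)$.

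The main step is to rule out $a_1\neq a_2$. Assume without loss of generality that $a_1<a_2$, and set $N'=a_2-a_1$. Then $1\le N'\le a_2\le N$. Writing $u=p_1wq_1$ as a word $\gamma_1\cdots\gamma_m$, we find two occurrences of $w$ in $u$:
\begin{align*}
    w=\gamma_{a_1+1}\cdots\gamma_{a_1+l(w)}=\gamma_{a_2+1}\cdots\gamma_{a_2+l(w)}.
\end{align*}
Write $w=\alpha_1\cdots\alpha_n$ with $n=l(w)$. Suppose first that $N'\le n$. For $1\le i\le n-N'$, we have $\alpha_{i+N'}=\gamma_{a_1+i+N'}=\gamma_{a_2+i}=\alpha_i$. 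So $N'$ is a period of $w$ with $N'\le N$, which contradicts the hypothesis on $w$.

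If instead $N'>n$, then $n<N'\le N$. But $n$ is always a period of $w$, so the hypothesis on periods forces $n>N$, which is again a contradiction. Hence $a_1=a_2$, and the first step finishes the proof.

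The only delicate points are bookkeeping ones rather than real obstacles. One has to justify that equality of nonzero paths in $\KQ$ is equality of words, which holds because $\P(Q)$ is a basis. One has to handle the length-zero paths (vertices) consistently. And one has to check the index ranges so that both occurrences of $w$ fit inside $u$; this is automatic, since both products are the same word of length $a_i+n+b_i$.
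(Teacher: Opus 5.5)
Your proof is correct and follows the same route as the paper: if $l(p_1)<l(p_2)$, the shift $l(p_2)-l(p_1)\le N$ between the two occurrences of $w$ is a period of $w$, which contradicts the hypothesis, and equal prefix lengths then force $(p_1,q_1)=(p_2,q_2)$. Your bookkeeping is more careful than the paper's: you treat the length-zero paths (vertices) separately, you use the bound $\le N$ where the paper writes $<N$, and you handle the case where the shift exceeds $l(w)$ (which in fact cannot occur, since $l(w)$ is itself a period and hence $l(w)>N$).
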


\begin{proof}
    First, assume that $l(p_1)<l(p_2)$. Since $p_1wq_1=p_2wq_2\neq 0$ there exists $r\in\Pp(Q)$ such that $p_2=p_1r$ giving
    \begin{align*}
        wq_2 = rwq_2
    \end{align*}
    implying that $w$ has period $1\leq l(r)<N$. This contradicts the assumption that $w$ has no period less than $N$ and we conclude that $l(p_1)\geq l(p_2)$. An analogous argument shows that $l(p_1)>l(p_2)$ is not possible. Thus, $l(p_1)=l(p_2)$ which implies that $p_1=p_2$ and consequently that $q_1=q_2$.
\end{proof}

\begin{lemma}\label{lemma:if.nonperiodic.separating}
    Let $Q=(Q_0,Q_1)$ be a quiver and let $\alpha\in Q_1$. If for each integer $N>0$ there exists $p\in\Ppara$ such that if $n$ is a period of $p$ then $n>N$, then $\alpha$ is a separating edge.
\end{lemma}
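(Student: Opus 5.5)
To prove that $\alpha$ is a separating edge, we show directly that $\ker(\theta_\alpha)=0$. Let
\begin{align*}
    x=\sum_{(p,q)\in B_{\alpha}}\lambda_{(p,q)}\,p\otimesK q\in\ker(\theta_\alpha).
\end{align*}
Only finitely many coefficients $\lambda_{(p,q)}$ are nonzero. Choose an integer $N$ with
\begin{align*}
    N\geq\max\{l(p),l(q):\lambda_{(p,q)}\neq 0\}.
\end{align*}
By hypothesis there is a path $w\in\Ppara$ all of whose periods exceed $N$. Since $w$ is parallel to $\alpha$, it lies in $s(\alpha)\KQ t(\alpha)$, so it is a valid argument for $\theta_\alpha(x)$. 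Evaluating gives
\begin{align*}
    0=\theta_\alpha(x)(w)=\sum_{(p,q)\in B_\alpha}\lambda_{(p,q)}\,pwq.
\end{align*}

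Each product $pwq$ with $(p,q)\in B_\alpha$ is nonzero, because $t(p)=s(\alpha)=s(w)$ and $t(w)=t(\alpha)=s(q)$. So each $pwq$ is a single path. Lemma~\ref{lemma:no.periods.pi.qi.equal} applies to any two pairs $(p_1,q_1),(p_2,q_2)$ with nonzero coefficients, since their lengths are at most $N$. It shows that $p_1wq_1=p_2wq_2$ forces $(p_1,q_1)=(p_2,q_2)$. Hence the paths $pwq$ appearing in the sum are pairwise distinct. Since $\P(Q)$ is a basis of $\KQ$, every $\lambda_{(p,q)}$ vanishes. Therefore $x=0$, so $\theta_\alpha$ is injective and $\alpha$ is separating.

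The one point needing care is the applicability of Lemma~\ref{lemma:no.periods.pi.qi.equal}. That lemma requires $w\in\Pp(Q)$, which holds since $w\|\alpha$ and $w$ can be taken of positive length. It also requires the lengths of $p_1,p_2,q_1,q_2$ to be bounded by the same $N$ that bounds the periods of $w$ from below. This is why $N$ must be chosen from $x$ before $w$ is selected. Apart from this order of choices, the argument is a direct application of the lemma and the basis property, and I expect no real obstacle.
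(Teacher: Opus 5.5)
Your proof is correct and follows the paper's argument: you bound the path lengths in the support of $x$ by $N$, pick $w\in\Ppara$ whose periods all exceed $N$, and use Lemma~\ref{lemma:no.periods.pi.qi.equal} to show that the paths $pwq$ are pairwise distinct, so $\theta_\alpha(x)(w)=0$ forces $x=0$. You are also slightly more careful than the paper on two points it leaves implicit: you choose $N$ greater than or equal to the maximum, which allows $N\geq 1$ as the lemma requires (the paper's maximum can be $0$), and you require $w$ to have positive length, since a vertex has no periods and so satisfies the hypothesis vacuously.
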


\begin{proof}
    Let $0\neq x\in\P_\alpha$ and write
    \begin{align*}
        x = \sum_{(p,q)\in B_{\alpha}} \lambda_{(p,q)} p\otimesK q
    \end{align*}
    with $\lambda_{p,q}\in\K$, and let
    \begin{align*}
        N = \max_{(p,q)\in B_{\alpha}:\lambda_{(p,q)}\neq 0}
        \{l(p),l(q)\}.
    \end{align*}
    Now, let $w$ be a path with no periods $\leq N$ (which exists by assumption) and consider
    \begin{align*}
        \theta_{\alpha}(x)(w) = \sum_{(p,q)\in B_{\alpha}} \lambda_{(p,q)} pwq.
    \end{align*}
    By using Lemma~\ref{lemma:no.periods.pi.qi.equal} one concludes that
    \begin{align*}
        pwq = p'wq' \equivalent (p,q)=(p',q')
    \end{align*}
    implying that all paths in $\theta_\alpha(x)(w)$ are linearly independent. One concludes that $\theta_\alpha(x)(w)\neq 0$ implying that $\ker(\theta_{\alpha})=0$ and that $\alpha$ is a separating edge.
\end{proof}

\noindent
Finally, we prove a result that allows us to construct cycles of arbitrarily long periods starting from two distinct cycles based at the same vertex. 

\begin{lemma}\label{lemma:pm.qm.no.period.N}
    Let $p,q\in\Pp(Q)$ such that $s(p)=t(p)=s(q)=t(q)$ and $[p,q]\neq 0$. For each integer $N>0$ there exists an integer $m>0$ such that $p^m q^m$ has no period less than or equal to $N$.  
\end{lemma}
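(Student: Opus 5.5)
We argue by contradiction, using the standard word-combinatorics fact that if $pq = qp$ for nonempty words then $p$ and $q$ are powers of a common word (Lyndon--Schützenberger). Write $a = l(p)$ and $b = l(q)$, both at least one. Note that $[p,q]\neq 0$ in $\KQ$ means exactly $pq\neq qp$ as paths, since both products are nonzero paths. Choose $m$ so large that $ma \geq N + a + b$ and $mb \geq N + a + b$. For instance, $m = N + a + b$ works.

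Now suppose $w = p^m q^m$ has a period $n$ with $1\leq n\leq N$. We obtain two conclusions from this.

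\begin{enumerate}
\item \textbf{The prefix $p^m$.} It has period $a$ by construction, and it inherits the period $n$. Its length $ma \geq n + a$ exceeds $n + a - \gcd(n,a)$. By the Fine--Wilf theorem, $p^m$ then has period $g = \gcd(n,a)$. Since $g$ divides both $a$ and $n$, write $p = u^{a/g}$ with $l(u) = g$. The $n$-periodic continuation of $w$ is then $u$-periodic throughout, because $n$ is a multiple of $g$.
\item \textbf{The whole word.} Since $n$ is a period of $w$ and $p^m$ is $g$-periodic with $l(p^m)\geq n$, the entire word $w$ is determined by its first $n$ letters. These letters are $u^{n/g}$. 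Hence $w$ is a prefix of $u^\infty$, and in particular the suffix $q^m$ is a factor of $u^\infty$.
\end{enumerate}

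Next we show that $q$ is a power of a conjugate of $u$. The factor $q^m$ of $u^\infty$ has periods $g$ and $b$, and its length $mb \geq g + b$. By Fine--Wilf it therefore has period $h = \gcd(g,b)$. Consider the primitive root $u_0$ of $u$. The factor $q^m$ has length at least $g$, so it contains a full cyclic shift of $u$. This shift has period $h$ with $h$ dividing $g$, which forces the primitive root of $u$ to have length dividing $h$. Hence $u = u_0^k$ and $b$ is a multiple of $l(u_0)$.

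Since $q^m$ begins at position $ma$, which is a multiple of $g$ and hence of $l(u_0)$, the suffix $q^m$ is aligned with the copies of $u_0$. Therefore $q = u_0^{b/l(u_0)}$. Together with $p = u^{a/g} = u_0^{ka/g}$, both $p$ and $q$ are powers of $u_0$. Consequently $pq = qp$, which contradicts $[p,q]\neq 0$. So $w = p^m q^m$ has no period at most $N$, as claimed.

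The main obstacle is the careful bookkeeping in Fine--Wilf, in particular ensuring that each relevant length, $ma$ and $mb$, exceeds the sum of the two periods involved. A cleaner alternative for the last two steps is to note that $p$ and $q$ are both factors of the $n$-periodic word $w$, starting at positions $0$ and $ma$. Choosing $m$ to be a multiple of $n$ aligns these positions modulo $n$, and then $p$ and $q$ are both prefixes of powers of the length-$n$ prefix $r$ of $w$. I would try this route first: pick $m$ to be a multiple of $N!$, so that $n$ divides $ma$, and deduce via Fine--Wilf that $p$ and $q$ are powers of the primitive root of $r$.
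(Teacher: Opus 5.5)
Your proof is correct, but it differs from the paper's in how you align the period with the junction between $p^m$ and $q^m$.

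The paper takes $m = N!\,(l(p)+l(q)-\gcd(l(p),l(q)))$, so every candidate period $d\le N$ divides $m$ and hence $l(p^m)$. Then $d$-periodicity forces $q^m$ and $p^m$ to share a prefix of length $m\min(l(p),l(q))\ge l(p)+l(q)-\gcd(l(p),l(q))$, and a single application of Fine--Wilf to $p$ and $q$ finishes. This is essentially the alternative you sketch at the end.

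You instead take $m=N+l(p)+l(q)$ and get the alignment from Fine--Wilf itself. Applying it to $p^m$ replaces $n$ by $g=\gcd(n,l(p))$, and $g$ divides $l(p^m)$ automatically. So $w$ is a prefix of $u^\infty$ with $q^m$ starting at a multiple of $g$, and a second application puts $p$ and $q$ over a common root.

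What each buys: your route gives a bound on $m$ linear in $N$ rather than factorial, at the cost of two Fine--Wilf steps and the primitive-root bookkeeping. The paper's route is shorter. The size of $m$ does not matter for how the lemma is used, since Lemma~\ref{lemma:if.nonperiodic.separating} only needs some path in $\Ppara$ with no small period.

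Your third paragraph can be shortened. Since $g$ divides $m\,l(p)$, $q^m$ is in fact a prefix of $u^\infty$ of length at least $l(q)+g$. Fine--Wilf applied directly to $q$ and $u$ then shows they are powers of a common word, and $p=u^{l(p)/g}$ is a power of that word too. This removes the need for the cyclic-shift and primitive-root argument.
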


\begin{proof}
    Let $l_0=\min(l(p),l(q))$ and set 
    \begin{align*}
        m = N!\paraa{l(p)+l(q)-\gcd(l(p),l(q))}
    \end{align*}
    and assume that $p^mq^m$ has a period $d\leq N$. 
    Since $d$ divides $m$ it follows that the length of $p^m$ is a multiple of $d$. Hence, since $p^mq^m$ has period $d$, after reaching the end of $p^m$ the beginning edges of $q^m$ must conicide with those of $p^m$, i.e. $p^m$ and $q^m$ agree up to the first $m\cdot l_0$ edges.
    Since
    \begin{align*}
        m\cdot l_0 = N!l_0\paraa{l(p)+l(q)-\gcd(l(p),l(q))}
        \geq l(p)+l(q)-\gcd(l(p),l(q))
    \end{align*}
    one concludes from the Fine-Wilf theorem (see e.g. \cite[Prop 1.3.5]{l:combinatorics.on.words}) that $p$ and $q$ are powers of the same cycle $c$ which implies that $[p,q]=0$, contradicting the assumption. Hence, $p^mq^m$ has no period less than or equal to $N$.  
\end{proof}

\noindent
At this point, we have all the necessary results to prove that non-separating edges are characterized by the fact that they are part of a unique cycle.

\begin{theorem}\label{thm:separating.equiv.cycle}
    Let $Q=(Q_0,Q_1)$ be a quiver and let $\alpha\in Q_1$. Then 
    $\alpha$ is a non-separating edge if and only if there exists $s\in\P(Q)$ such that $c=\alpha s$ is a cycle and
    \begin{align*}
        \Ppara=\{c^n\alpha:n\geq 0\}.
    \end{align*}
\end{theorem}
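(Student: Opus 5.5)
We prove the two implications separately: the "if" direction by exhibiting an explicit nonzero element of $\ker(\theta_\alpha)$, and the "only if" direction by contraposition, using Lemma~\ref{lemma:pm.qm.no.period.N} to manufacture parallel paths of long period and then Lemma~\ref{lemma:if.nonperiodic.separating}.

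\textbf{The ``if'' direction.} Assume $c=\alpha s$ is a cycle with $\Ppara=\{c^n\alpha:n\geq 0\}$. Since $c$ is based at $s(\alpha)$, we have $c\in T_\alpha$. Since $sc=s\alpha s$ is a cycle at $t(\alpha)$, we have $sc\in S_\alpha$. Consider the element
\begin{align*}
    x = c\otimesK s(\alpha)\alpha s - s(\alpha)\otimesK s\alpha s\in P_\alpha,
\end{align*}
or more simply $x=c\otimesK t(\alpha)-s(\alpha)\otimesK sc$ (here $sc=s\alpha s$). These are distinct basis elements, so $x\neq 0$. For $r=c^n\alpha$ one computes $\theta_\alpha(x)(r)=c^{n+1}\alpha-c^n\alpha\, s\alpha s$. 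Since $\alpha s\alpha s=c^2$, this equals $c^{n+1}\alpha-c^{n+2}$. This does not vanish, so this choice of $x$ is wrong and must be adjusted. The correct choice is
\begin{align*}
    x=c\otimesK t(\alpha)-s(\alpha)\otimesK s\alpha,
\end{align*}
which is legitimate because $s\alpha$ is a cycle at $t(\alpha)$. Then $c\,r=c^{n+1}\alpha$ and $r\,s\alpha=c^n\alpha s\alpha=c^{n+1}\alpha$. Hence $\theta_\alpha(x)$ vanishes on all of $\Ppara$, and by linearity on $s(\alpha)\KQ t(\alpha)$, which is spanned by $\Ppara$. Since $c\neq s(\alpha)$, the tensor $x$ is nonzero, so $\alpha$ is non-separating.

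\textbf{The ``only if'' direction.} We argue by contraposition: assuming $\alpha$ is non-separating, Proposition~\ref{prop:alpha.not.cycle.separating} gives a cycle through $\alpha$. Rotating it, we obtain a cycle $c=\alpha s$ based at $s(\alpha)$; choose $c$ to be primitive, i.e.\ not a proper power. Every $r\in\Ppara$ satisfies $r\alpha^{-}$-type decompositions; more concretely, $r\,s$ is a cycle at $s(\alpha)$ of the form (path)$\cdot s$. Suppose some $r\in\Ppara$ is not of the form $c^n\alpha$. We then want two cycles $p,q$ at a common vertex with $[p,q]\neq 0$, and a way to embed $p^mq^m$ into paths parallel to $\alpha$.

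For the two cycles, take $p=c=\alpha s$ and $q=rs$, both cycles at $s(\alpha)$. If $pq=qp$, then, since we work in a free monoid, $p$ and $q$ are powers of a common word. Because $c$ is primitive, this forces $rs=c^k$. Comparing the endings gives $r=c^{k-1}\alpha s\,s^{-1}$, i.e.\ $r=c^{k-1}\alpha$. This is the standard cancellation and is what we aim to make rigorous. Hence $[p,q]\neq 0$.

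For the embedding, consider the paths $w_m=p^mq^m\alpha\in\Ppara$ (they are parallel since $p^mq^m$ is a cycle at $s(\alpha)$). By Lemma~\ref{lemma:pm.qm.no.period.N}, for each $N$ some $p^mq^m$ has no period $\leq N$. Appending $\alpha$ could create a period only if that period were already a period of the prefix, since a period of a word is a period of every factor. So $w_m$ has no period $\leq N$ either. Lemma~\ref{lemma:if.nonperiodic.separating} then shows that $\alpha$ is separating, a contradiction. Therefore $\Ppara=\{c^n\alpha\}$.

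\textbf{Main obstacle.} The delicate step is the combinatorics-on-words argument that $rs=c^k$ with $c=\alpha s$ primitive forces $r=c^{k-1}\alpha$. This needs care with the right cancellation of $s$ and with the degenerate case where $s$ is a vertex, i.e.\ $\alpha$ is a loop. In that case $c=\alpha$ and the argument simplifies to $q=r$ being a cycle that does not commute with $\alpha$.
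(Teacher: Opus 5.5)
Your proposal is correct and follows essentially the same route as the paper: an explicit nonzero element of $\ker(\theta_\alpha)$ for the ``if'' direction, and for the converse Proposition~\ref{prop:alpha.not.cycle.separating}, Lemma~\ref{lemma:pm.qm.no.period.N} applied to $c$ and $rs$, Lemma~\ref{lemma:if.nonperiodic.separating}, and then ``commuting words are powers of a common word'' plus right cancellation of $s$; your corrected element $c\otimesK t(\alpha)-s(\alpha)\otimesK s\alpha$ works just as well as the paper's $c\otimesK s-s(\alpha)\otimesK sc$, but you should drop the false start. Fixing $c$ primitive from the start is a small tidy-up over the paper, which passes to a common root $\tilde c$ and tacitly re-chooses $s$ with $\tilde c=\alpha\tilde s$; note that, exactly as in the paper, your argument reads $\Ppara$ as consisting only of paths of positive length, which matters only when $\alpha$ is a loop, since then the vertex $s(\alpha)$ is itself parallel to $\alpha$.
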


\begin{proof}
    Let us start by assuming that $\alpha$ is a non-separating edge. If $\alpha$ is not a separating edge then Proposition~\ref{prop:alpha.not.cycle.separating} implies that $\alpha$ is part of a cycle. Hence, there exists $s\in\Pp(Q)$ such that $c=\alpha s$ is a cycle at $s(\alpha)$. Now, let $r\in\Ppara$, which implies that $c'=rs$ is a cycle at $s(\alpha)$. 
    
    If $[c,c']\neq 0$ then Lemma~\ref{lemma:pm.qm.no.period.N} implies than one can construct a cycle $\hat{c}_N$ which has no period less than $N$ for any integer $N>0$; consequently, $\hat{c}_N\alpha$ is an element of $\Ppara$ which has no period less than $N$. Lemma~\ref{lemma:if.nonperiodic.separating} then implies that $\alpha$ is a separating edge, which contradicts the assumption. Hence, $[c',c]=0$.

    Since $[c,c']=0$ and $cc',c'c\neq 0$, the cycles $c,c'$ have to be powers of the same cycle $\tilde{c}$ (see e.g. \cite[Prop 1.3.2]{l:combinatorics.on.words}), i.e. $c=\tilde{c}^{n_1}$ and $c'=\tilde{c}^{n_2}$. Since $l(\alpha s)\leq l(rs)$ one necessarily has $n_2\geq n_1$ giving
    \begin{align*}
        rs = \tilde{c}^{n_2-n_1}\tilde{c}^{n_1}=\tilde{c}^{n_2-n_1}\alpha s
    \end{align*}
    implying that $r=\tilde{c}^{n_2-n_1}\alpha$. Hence, every element $r\in\Ppara$ is of the form $\tilde{c}^n\alpha$ for some $n\geq 0$.
    To prove the converse statement, assume that $s\in\P(Q)$ such that $c=\alpha s$ is a cycle and $\Ppara = \{c^n\alpha:n\geq 0\}$. Then $s(s)=t(\alpha)$ and $t(s)=s(\alpha)=s(c)$, so
    \begin{align*}
        \theta_\alpha(c\otimesK s-s(\alpha)\otimesK sc)(c^n\alpha) = 
        c^{n+1}\alpha s - s(\alpha)c^n\alpha sc=c^{n+1}c-c^ncc=0
    \end{align*}
    for any $n\geq0$. Thus $0\neq c\otimesK s-s(\alpha)\otimesK sc\in\ker\theta_\alpha$, and hence $\alpha$ is a non--separating edge.
\end{proof}

\noindent
Recall that two cycles are distinct if they are not powers of the same cycle. Theorem~\ref{thm:separating.equiv.cycle} tells us that a separating edge is either not part of any cycle, or part of at least two distinct cycles. Namely, if $\alpha\in Q_1$ is part of two distinct cycles $c_1,c_2$ then there does not exist a cycle $c$ such that $c_1=c^n$ and $c_2=c^m$,  contradicting $\Ppara=\{c^n\alpha:n\geq 0\}$.

Let us summarize the results in this section and formulate the property of being a separating quiver in the following equivalent ways.

\begin{corollary}\label{cor:equiv.sep.proj.cycles}
    Let $Q=(Q_0,Q_1)$ be a quiver. Then the following are equivalent
    \begin{enumerate}
        \item $Q$ is a separating quiver,
        \item $\OmegaoneD$ is a projective bimodule,
        \item every $\alpha\in Q_1$ is part of either 0 or at least 2 distinct cycles.
    \end{enumerate}    
\end{corollary}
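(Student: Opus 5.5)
The plan is to chain together results already proved in this section. The equivalence (1)$\Leftrightarrow$(2) is exactly Corollary~\ref{cor:Q.separating.equiv.projective}, so nothing new is needed there. The remaining work is (1)$\Leftrightarrow$(3). Since both conditions quantify over all edges, it suffices to show, edge by edge, that $\alpha$ is separating if and only if $\alpha$ lies on zero or at least two distinct cycles. Equivalently, $\alpha$ is non-separating if and only if $\alpha$ lies on cycles, all of which are powers of a single cycle. Theorem~\ref{thm:separating.equiv.cycle} characterizes non-separating edges by the condition $\Ppara=\{c^n\alpha:n\geq0\}$ with $c=\alpha s$ a cycle, so the task reduces to translating this condition into the language of cycles through $\alpha$.

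For (3)$\Rightarrow$(1), argue by contraposition. Suppose $\alpha$ is non-separating, and take $c=\alpha s$ as in Theorem~\ref{thm:separating.equiv.cycle}. Then $\alpha$ lies on at least one cycle, namely $c$. Let $c'$ be any cycle containing $\alpha$. Rotate it to start with $\alpha$, writing $c''=\alpha s'$; rotating does not change which cycles are distinct up to the rotation. Then $s'$ is a path from $t(\alpha)$ to $s(\alpha)$. Now $r=\alpha s'\alpha$ lies in $\Ppara$, so $r=c^n\alpha$ for some $n\geq0$. Comparing lengths and prefixes shows $\alpha s'=c^n$. Hence every cycle through $\alpha$ is, up to rotation, a power of $c$, and so $\alpha$ lies on exactly one cycle up to the equivalence ``distinct''. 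This contradicts (3).

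For (1)$\Rightarrow$(3), again argue by contraposition. Suppose $\alpha$ lies on exactly one cycle class, and choose a representative $c=\alpha s$ at $s(\alpha)$. Let $r\in\Ppara$. Then $rs$ is a cycle at $s(\alpha)$. If $r$ contains $\alpha$, or if $s$ does (which it does when $l(s)\ge0$ and $rs$ passes through $c$'s edges), then $rs$ is a cycle through $\alpha$ and hence a power $\tilde c^{k}$ of the primitive root $\tilde c$ of $c$. The argument at the end of the proof of Theorem~\ref{thm:separating.equiv.cycle} then gives $r=\tilde c^{m}\alpha$. The awkward case is $r\ne\alpha$ not containing $\alpha$. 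Then $rs$ is a cycle through $\alpha$ only if $\alpha\in s$, which may fail. To handle it, use $r\alpha s$ instead, the cycle obtained by going around $c$ once more, which always contains $\alpha$. It therefore equals a power of $\tilde c$, and since it starts $r\alpha$ with $r$ avoiding $\alpha$ while $\tilde c$ starts with $\alpha$, we get a contradiction unless $r$ is trivial. One must also check that $c=\tilde c^{j}$ with $\tilde c=\alpha s_0$, so that $\Ppara$ takes the required form $\{\tilde c^n\alpha\}$ after replacing $c$ by $\tilde c$. Then Theorem~\ref{thm:separating.equiv.cycle} shows $\alpha$ is non-separating, contradicting (1).

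I expect the main obstacle to be this bookkeeping about rotations of cycles and what ``part of two distinct cycles'' means. The paper's notion of distinct cycles refers to powers of the same cycle, so cycles must be rotated to be based at $s(\alpha)$ before applying the power/commutation results of \cite{l:combinatorics.on.words}. I would make this precise first, by fixing all cycles through $\alpha$ to start with $\alpha$, and then run the two contrapositive arguments above.
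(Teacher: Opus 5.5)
Your overall route is the paper's. (1)$\Leftrightarrow$(2) is Corollary~\ref{cor:Q.separating.equiv.projective}, and (1)$\Leftrightarrow$(3) is Theorem~\ref{thm:separating.equiv.cycle} plus a translation between $\Ppara=\{c^n\alpha:n\geq 0\}$ and ``all cycles through $\alpha$ are powers of one cycle''. The paper gives this translation only as a one-sentence remark after the theorem, which amounts to your (3)$\Rightarrow$(1) direction, and it ignores rotations. You are right that rotations matter: for a $2$-cycle $\alpha\beta$ between distinct vertices, $\alpha\beta$ and $\beta\alpha$ are literally not powers of a common cycle, yet $\alpha$ is non-separating. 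Your (3)$\Rightarrow$(1) argument is correct: rotate to $\alpha s'$, note $\alpha s'\alpha\in\Ppara$, then cancel.

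Your (1)$\Rightarrow$(3) argument, however, breaks at its key step. The expression $r\alpha s$ is not a path unless $\alpha$ is a loop, because $t(r)=t(\alpha)$ while the next edge $\alpha$ starts at $s(\alpha)$. A corrected cycle such as $rs\alpha s$ begins with the first edge of $r$, not with $\alpha$. So your normalization (cycles through $\alpha$ rotated to begin with $\alpha$) does not immediately make it equal to a power of $\tilde c$. The same problem affects $rs$ in your first case, where $\alpha$ may sit in the middle of $r$.

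Both problems, and the case split, disappear if you go around $\tilde c=\alpha\tilde s$ first. For $r\in\Ppara$ of positive length, $\alpha\tilde s r\tilde s$ is a cycle beginning with $\alpha$, so it equals $\tilde c^{k}=\alpha\tilde s\,\tilde c^{k-1}$ for some $k$. Left cancellation gives $r\tilde s=\tilde c^{k-1}$, hence $k\geq 2$. Right cancellation of $\tilde s$ then gives $r=\tilde c^{k-2}\alpha$.

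To conclude that $\alpha$ is non-separating, exhibit the kernel element $\tilde c\otimesK\tilde s-s(\alpha)\otimesK\tilde s\tilde c$ directly, as in the proof of Theorem~\ref{thm:separating.equiv.cycle}. Do not quote the theorem's condition verbatim. When $\alpha$ is a loop, the vertex $s(\alpha)$ also lies in $\Ppara$ (this is your ``trivial $r$''), so $\Ppara=\{\tilde c^n\alpha\}$ is literally false. Nevertheless $\theta_\alpha$ of that element still vanishes on $s(\alpha)$, since in that case $\tilde c=\alpha$ and $\tilde s=s(\alpha)$.
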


\subsection{Left module structure of $\OmegaoneD$}

Let us briefly discuss the left module structure of $\OmegaoneD$, as we will later study the dimension of this module in relation to its dual (as a right module).

First of all, it turns out that one can easily decompose $P_{\alpha}$ as a direct sum of projective left modules.

\begin{proposition}\label{prop:Omegaone.left.module.decomposition}
    If $Q=(Q_0,Q_1)$ is a quiver and $\alpha\in Q_1$ then
    \begin{align*}
        P_{\alpha}\simeq\hspace{-2mm}
        \bigoplus_{p:\,s(p)=t(\alpha)}\hspace{-2mm}
        \KQ s(\alpha)
        =\paraa{\KQ s(\alpha)}^{\dim t(\alpha)\KQ}
    \end{align*}
    as left $\KQ$-modules.
\end{proposition}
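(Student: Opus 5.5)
The plan is to use the vector space decomposition $P_{\alpha}=\KQ s(\alpha)\otimesK t(\alpha)\KQ$ and notice that the left $\KQ$-action only touches the first tensor factor. Since $t(\alpha)\KQ$ has the basis $\{p\in\P(Q): s(p)=t(\alpha)\}$, I will split $P_\alpha$ along this basis of the second factor.

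For each path $p$ with $s(p)=t(\alpha)$, set
\begin{align*}
    M_p = \KQ s(\alpha)\otimesK p = \{a\otimesK p : a\in\KQ s(\alpha)\}.
\end{align*}
Each $M_p$ is a left $\KQ$-submodule of $P_\alpha$, because $b(a\otimesK p)=(ba)\otimesK p$. Recall that $\{q\otimesK p:(q,p)\in B_\alpha\}$ is a basis of $P_\alpha$, as noted after the definition of $P_\alpha$. It follows that $P_\alpha=\bigoplus_{p} M_p$ as vector spaces, and hence as left modules, since each summand is a submodule.

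Next, define $\psi_p:\KQ s(\alpha)\to M_p$ by $a\mapsto a\otimesK p$. This map is left $\KQ$-linear and surjective by construction. It is injective because tensoring over the field $\K$ with the nonzero vector $p$ is injective; concretely, $\psi_p$ carries the basis $\{q: t(q)=s(\alpha)\}$ of $\KQ s(\alpha)$ bijectively onto the basis $\{q\otimesK p\}$ of $M_p$. Summing the $\psi_p$ gives the isomorphism
\begin{align*}
    \bigoplus_{p:\,s(p)=t(\alpha)}\KQ s(\alpha)\to P_\alpha.
\end{align*}

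Finally, the index set is exactly a basis of $t(\alpha)\KQ$. The direct sum therefore equals $(\KQ s(\alpha))^{\dim t(\alpha)\KQ}$, read as a direct sum indexed by that basis. This dimension may be infinite when there are cycles, in which case the power means an infinite direct sum rather than a product. There is no real obstacle here; the only point needing care is this interpretation of the exponent when $t(\alpha)\KQ$ is infinite dimensional.
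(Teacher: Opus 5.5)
Your proposal is correct and is essentially the paper's argument. The paper defines the same map $\bigoplus_{p:\,s(p)=t(\alpha)}\KQ s(\alpha)\to P_\alpha$, sending the $p$-th component $a$ to $a\otimesK p$, and proves bijectivity from the basis $\{q\otimesK p:(q,p)\in B_\alpha\}$ of $P_\alpha$, exactly as you do; you merely organize it as an internal decomposition $P_\alpha=\bigoplus_p M_p$ followed by componentwise isomorphisms, and your remark that the exponent $\dim t(\alpha)\KQ$ means a direct sum over a basis when it is infinite is a correct clarification the paper leaves implicit.
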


\begin{proof}
    Define
    \begin{align*}
        \phi:
        \bigoplus_{p:\,s(p)=t(\alpha)}\hspace{-2mm}
        \KQ s(\alpha)
        &\to
        P_{\alpha}\\
        \bigoplus_{p:\,s(p)=t(\alpha)}\hspace{-2mm}
        c_{\alpha}^p s(\alpha)&\mapsto\hspace{-2mm}
        \sum_{p:\,s(p)=t(\alpha)}c_{\alpha}^p s(\alpha)\otimesK t(\alpha)p
    \end{align*}
    Then $\phi$ is a well-defined left module homomorphism since if
    \begin{align*}
        x = \bigoplus_{p:\,s(p)=t(\alpha)}\hspace{-2mm} c_{\alpha}^p s(\alpha)=0
    \end{align*}
    then $c_{\alpha}^p s(\alpha)=0$ for each $p$, implying that $\phi(x)=0$. Moreover, $\phi$ is clearly surjective. Let us show that $\phi$ is injective. To this end, assume that 
    \begin{align*}
        x = \bigoplus_{p:\,s(p)=t(\alpha)}\hspace{-2mm} c_{\alpha}^p s(\alpha)\qand
        \phi(x) 
        = \hspace{-3mm}\sum_{p:\,s(p)=t(\alpha)}\hspace{-2mm}
        c_{\alpha}^p\otimesK p = 0
    \end{align*}
    and write
    \begin{align*}
        c_{\alpha}^p = \sum_{q:\, t(q)=s(\alpha)}
        \lambda_{p,q}q\qquad (\lambda_{p,q}\in\K)
    \end{align*}
    giving
    \begin{align*}
        0=\sum_{q:\, t(q)=s(\alpha)}
        \sum_{p:\,s(p)=t(\alpha)}\hspace{-2mm}
        \lambda_{p,q}q\otimesK p
        =\sum_{(q,p)\in B_{\alpha}}\lambda_{p,q}q\otimesK p.
    \end{align*}
    Since $\{q\otimesK p:(q,p)\in B_{\alpha}\}$ is a basis for $P_{\alpha}$ we conclude that $\lambda_{p,q}=0$ for all $(p,q)\in B_{\alpha}$ implying that $x=0$. Hence, $\phi$ is a left module isomorphism.
\end{proof}

\noindent
Proposition~\ref{prop:Omegaone.left.module.decomposition}, together with Corollary~\ref{cor:Q.separating.equiv.projective}, implies that one may decompose $\OmegaoneD$ as a left module in the following way
\begin{align*}
    \OmegaoneD \simeq \bigoplus_{\alpha\in Q_1}\bigoplus_{p:\,s(p)=t(\alpha)} \KQ s(\alpha)
    \simeq \bigoplus_{\alpha\in Q_1} (\KQ s(\alpha))^{\dim(t(\alpha)\KQ)}
\end{align*}
if $Q$ is a separating quiver.

\subsection{Endomorphisms of $\OmegaoneD$}\label{sec:endomorphisms}

\noindent
In the following, we will study bimodule endomorphisms of $\OmegaoneD$ via the decomposition into the projective modules $P_{\alpha}$. Thus, if $\OmegaoneD$ is projective then
\begin{align*}
    \OmegaoneD\simeq \bigoplus_{\alpha\in Q_1}P_\alpha
\end{align*}
implying that 
\begin{align}
    \Hom_{\KQ}(\OmegaoneD,\OmegaoneD)\simeq
    \bigoplus_{\alpha,\beta\in Q_1}\Hom_{\KQ}(P_\alpha,P_\beta),
\end{align}
where $\Hom_{\KQ}(M,N)$ denotes the set of bimodule homomorphisms from $M$ to $N$. It turns out that nonzero endomorphisms are "local" in the sense that there are no nonzero homomorphisms from $P_{\alpha}$ to $P_{\beta}$ unless $\beta$ lies in a path parallel to $\alpha$ (cf. Lemma~\ref{lemma:Hom.Pa.Pb.parallel}). 

Let us start by identifying $\Hom_{\KQ}(P_{\alpha},P_{\beta})$ with a vector subspace of $\KQ\otimesK \KQ$.

\begin{proposition}\label{prop:Homab.iso.stKQ}
    If $Q=(Q_0,Q_1)$ is a quiver and $\alpha,\beta\in Q_1$ then
    \begin{align*}
        \phi:\Hom_{\KQ}(P_\alpha,P_\beta)&\to s(\alpha)\KQ s(\beta)\otimesK t(\beta)\KQ t(\alpha)\\
        f &\mapsto f(s(\alpha)\otimesK t(\alpha))
    \end{align*}
    is a vector space isomorphism.
\end{proposition}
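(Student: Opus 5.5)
This is the standard identification $\Hom_{\KQ^e}(\KQ^e e,M)\simeq eM$ for an idempotent $e$ in the enveloping algebra $\KQ^e$. Here we take $e=s(\alpha)\otimesK t(\alpha)$, so that $P_\alpha=\KQ^e e$. The plan is to check three things in order: that $\phi$ lands in the stated subspace, that it is injective, and that it is surjective.

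\emph{The image.} Since $f$ is a bimodule homomorphism and $e$ is idempotent, $f(e)=f(s(\alpha)\,e\,t(\alpha))=s(\alpha)f(e)t(\alpha)$. Elements of $P_\beta$ have the form $\sum\lambda_{(p,q)}p\otimesK q$ with $t(p)=s(\beta)$ and $s(q)=t(\beta)$. Multiplying such an element on the left by $s(\alpha)$ and on the right by $t(\alpha)$ keeps exactly the basis terms with $s(p)=s(\alpha)$ and $t(q)=t(\alpha)$. Hence $f(e)\in s(\alpha)\KQ s(\beta)\otimesK t(\beta)\KQ t(\alpha)$, and $\phi$ is clearly $\K$-linear.

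\emph{Injectivity.} $P_\alpha$ is generated as a bimodule by $e$, since $p\otimesK q=p\,e\,q$ for $(p,q)\in B_\alpha$. So $f$ is determined by $f(e)$, and $\phi(f)=0$ forces $f=0$.

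\emph{Surjectivity.} Given $y$ in the target space, define $f_y$ on the basis of $P_\alpha$ by $f_y(p\otimesK q)=p\,y\,q$ for $(p,q)\in B_\alpha$, and extend linearly. Here $p\,y\,q$ is the bimodule action on $P_\beta$. Two things need checking.
\begin{itemize}
\item $f_y$ is a bimodule map. Take a path $r$ and $(p,q)\in B_\alpha$. If $rp\neq 0$, then $f_y(r\cdot(p\otimesK q))=rp\,y\,q=r\,f_y(p\otimesK q)$. If $rp=0$, both sides vanish. The right action is handled the same way.
\item $f_y(e)=s(\alpha)\,y\,t(\alpha)=y$. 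This holds because $y$ lies in $s(\alpha)P_\beta t(\alpha)$.
\end{itemize}
Hence $\phi(f_y)=y$. Equivalently, $f_y(x)=x\cdot y$ under the left $\KQ^e$-module structure, which makes well-definedness immediate because $\{p\otimesK q\}$ is a basis.

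The only point needing care is the bookkeeping in the surjectivity step: we need $y=eye$ in $\KQ^e$-notation, so that $f_y$ is compatible with $e$ and with the relations $rp=0$. This is exactly what membership in the stated subspace guarantees, so no real obstacle is expected.
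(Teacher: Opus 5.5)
Your proof is correct and follows the paper's argument essentially step by step: you locate the image using the idempotents, get injectivity because $s(\alpha)\otimesK t(\alpha)$ generates $P_\alpha$, and get surjectivity from the map $p\otimesK q\mapsto pyq$ defined on the basis. You also verify explicitly the bimodule property that the paper only asserts. One small notational slip: in your closing remark the condition actually needed is $y=ey$ with $e=s(\alpha)\otimesK t(\alpha)$ acting on $P_\beta$, not $y=eye$, but this does not affect the argument.
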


\begin{proof}
    By definition, $\phi(f)\in \KQ s(\beta)\otimesK t(\beta)\KQ$ and, moreover,  
    \begin{align*}
        f(s(\alpha)\otimesK t(\alpha)) 
        =f(s(\alpha)^2\otimesK t(\alpha)^2) = s(\alpha)f(s(\alpha)\otimesK t(\alpha))t(\alpha)
    \end{align*}
    showing that $\phi(f)\in s(\alpha)\KQ s(\beta)\otimesK t(\beta)\KQ t(\alpha)$. Since $f$ is a bimodule homomorphism, it is completely determined by $\phi(f)$ since $f(ps(\alpha)\otimesK t(\alpha)q)=p\phi(f)q$. Hence, if $\phi(f)=0$ then 
    \begin{align*}
        f(s(\alpha)\otimes t(\alpha)) = 0\implies f=0
    \end{align*}
    showing that $\phi$ is injective. 

    To show that $\phi$ is surjective, let $f_{\alpha}\in s(\alpha)\KQ s(\beta)\otimesK t(\beta)\KQ t(\alpha)$. Since a basis of $P_\alpha$ is given by $\{p\otimesK q:(p,q)\in B_{\alpha}\}$, one can construct a linear map $f\in \Hom_{\K}(P_\alpha,P_\beta)$ by setting
    \begin{align*}
        f(p\otimesK q) = pf_{\alpha}q
        \qquad \text{for }(p,q)\in B_{\alpha},
    \end{align*}
    and one notes that $f$ is a bimodule homomorphism satisfying
    \begin{align*}
        \phi(f) = f(s(\alpha)\otimesK t(\alpha)) = s(\alpha)f_{\alpha}t(\alpha) = f_{\alpha},
    \end{align*}
    showing that $\phi$ is indeed surjective. 
\end{proof}

\noindent
As already mentioned, endomorphisms of $\OmegaoneD$ has a certain local behavior where nonzero homomorphisms from $P_{\alpha}$ to $P_{\beta}$ only exist if $\beta$ lies in a path parallel to $\alpha$. This is made precise in the following result. 

\begin{lemma}\label{lemma:Hom.Pa.Pb.parallel}
    If $\alpha,\beta\in Q_1$ such that $\beta\notin\Qpalpha$ then $\Hom_{\KQ}(P_\alpha,P_{\beta})=0$. 
\end{lemma}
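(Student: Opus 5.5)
By Proposition~\ref{prop:Homab.iso.stKQ}, bimodule homomorphisms $f:P_\alpha\to P_\beta$ correspond bijectively to elements
\begin{align*}
    f(s(\alpha)\otimesK t(\alpha))\in s(\alpha)\KQ s(\beta)\otimesK t(\beta)\KQ t(\alpha).
\end{align*}
It therefore suffices to show that this vector space is zero whenever $\beta\notin\Qpalpha$. That is a purely combinatorial statement about paths in $Q$.

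The space $s(\alpha)\KQ s(\beta)\otimesK t(\beta)\KQ t(\alpha)$ has as basis the elementary tensors $p\otimesK q$ with $p\in\P(s(\alpha),s(\beta))$ and $q\in\P(t(\beta),t(\alpha))$. This holds because $\P(Q)$ is a basis of $\KQ$, and $u\KQ v$ is spanned by the paths from $u$ to $v$. Suppose, for contradiction, that such a pair $(p,q)$ exists. Then $p$ ends at $s(\beta)$, and $\beta$ starts there. Likewise $\beta$ ends at $t(\beta)$, where $q$ starts. Hence the concatenation $r=p\beta q$ is a nonzero path, with $s(r)=s(p)=s(\alpha)$ and $t(r)=t(q)=t(\alpha)$.

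Thus $r\in\Ppara$ and $\beta\in r$ (taking $p_1=p$, $p_2=q$ in the definition of $\beta\in r$). By the definition of $\Qpalpha$, this gives $\beta\in\Qpalpha$, contradicting the hypothesis. So no basis element exists, the space is zero, and by the isomorphism $\Hom_{\KQ}(P_\alpha,P_\beta)=0$.

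There is no real obstacle. The only point to check is that paths of length zero are allowed for $p$ and $q$: for example, $p=s(\alpha)$ when $s(\alpha)=s(\beta)$. This is harmless, since the concatenation still makes sense and $r$ remains a path parallel to $\alpha$ containing $\beta$; in particular the case $\beta=\alpha$, $r=\alpha$ is consistent with $\alpha\in\Qpalpha$.
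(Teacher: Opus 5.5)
Your proof is correct and follows the paper's argument. Both reduce, via Proposition~\ref{prop:Homab.iso.stKQ}, to showing $s(\alpha)\KQ s(\beta)\otimesK t(\beta)\KQ t(\alpha)=0$; this holds because any pair of paths $p,q$ spanning it would give a path $p\beta q$ parallel to $\alpha$ and containing $\beta$, so $\beta\in\Qpalpha$. You spell out the basis and concatenation step that the paper leaves implicit, and only the injectivity of that isomorphism is actually needed.
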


\begin{proof}
    Since $\beta\notin\Qpalpha$ if follows that either $s(\alpha)\KQ s(\beta)=0$ or $t(\beta)\KQ t(\alpha)=0$; otherwise $\beta$ would necessarily be in a path parallel to $\alpha$. This shows that 
    \begin{align*}
        s(\alpha)\KQ s(\beta)\otimesK t(\beta)\KQ t(\alpha)=0   
    \end{align*}
    implying that $\Hom_{\KQ}(P_\alpha,P_\beta)=0$ by Proposition~\ref{prop:Homab.iso.stKQ}.
\end{proof}

\begin{corollary}\label{cor:Hom.zero.no.parallel}
    Let $\alpha\in Q_1$. If $\alpha$ is not a bypass then $\Hom_{\KQ}(P_\alpha,P_{\beta})=0$ for all $Q_1\ni\beta\neq\alpha$.
\end{corollary}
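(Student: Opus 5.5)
The plan is to reduce the statement to Lemma~\ref{lemma:Hom.Pa.Pb.parallel}. It suffices to show that if $\alpha$ is not a bypass, then $\Qpalpha=\{\alpha\}$. Once this is established, every $\beta\neq\alpha$ satisfies $\beta\notin\Qpalpha$, and Lemma~\ref{lemma:Hom.Pa.Pb.parallel} immediately gives $\Hom_{\KQ}(P_\alpha,P_\beta)=0$.

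To identify $\Qpalpha$, I would first recall that $\alpha$ is not a bypass exactly when $|\Ppara|=1$. Since $\alpha\in\Ppara$, this means $\Ppara=\{\alpha\}$. There is one subtlety here. The set $\Ppara$ consists of all paths parallel to $\alpha$, including paths of length zero if $s(\alpha)=t(\alpha)$. The bypass definition, however, only concerns paths in $\Ppara\cap\Pp(Q)$. In the loop case the vertex $s(\alpha)$ is also parallel to $\alpha$, but a vertex contains no edges, so it contributes nothing to $\Qpalpha$. In either case every path of positive length parallel to $\alpha$ equals $\alpha$ itself.

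By definition, $\Qpalpha$ consists of the edges $\beta$ occurring in some $p\in\Ppara$. By the previous step, such a $p$ is either $\alpha$ or a vertex. The only edge occurring in these paths is $\alpha$, so $\Qpalpha=\{\alpha\}$, which completes the reduction.

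There is essentially no obstacle. The only point needing care is the length-zero parallel path in the loop case, and it is handled by the observation that a vertex contains no edges.
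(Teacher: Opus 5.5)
Your proof is correct and is essentially the paper's argument, which leaves the corollary as an immediate consequence of Lemma~\ref{lemma:Hom.Pa.Pb.parallel}: not being a bypass forces $\Qpalpha=\{\alpha\}$, so every $\beta\neq\alpha$ lies outside $\Qpalpha$. Your loop-case caveat is harmless but vacuous, since a loop $\alpha$ is always a bypass ($\alpha^2\in\Ppara$ and $\alpha^2\neq\alpha$).
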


\noindent
The above result implies that for $\alpha\in Q_1$ that is not a bypass, the only nonzero homomorphisms lie in $\Hom_{\KQ}(P_{\alpha},P_{\alpha})$, which is identified (via Proposition~\ref{prop:Homab.iso.stKQ}) with
\begin{align*}
    \Hom_{\KQ}(P_\alpha,P_\alpha)\simeq 
    s(\alpha)\KQ s(\alpha)\otimesK t(\alpha)\KQ t(\alpha)
    =\K\angles{s(\alpha)\otimesK t(\alpha)}
\end{align*}
since there are no nontrivial elements in $s(\alpha)\KQ s(\alpha)$ or $t(\alpha)\KQ t(\alpha)$ if $\alpha$ is not a bypass. Hence, if $f\in\Hom_{\KQ}(P_\alpha,P_\alpha)$ then there exists $\lambda_f\in\K$ such that
\begin{align*}
    f(d\alpha) = \lambda_f d\alpha.
\end{align*}

\begin{corollary}
    If $Q=(Q_0,Q_1)$ is a nonpassing quiver then
    \begin{align*}
        \Hom_{\KQ}(\OmegaoneD,\OmegaoneD)\simeq \K^{|Q_1|}.
    \end{align*}
\end{corollary}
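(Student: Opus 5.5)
The plan is to reduce to the bimodule decomposition and then apply the local analysis of Section~\ref{sec:endomorphisms}. First observe that a nonpassing quiver has no loops, since a loop $\alpha$ would give the bypass $(\alpha,\alpha^2)$. More generally, no edge lies on a cycle: if $\alpha s$ is a cycle with $s\in\P(Q)$, then $\alpha s\alpha\in\Ppara$ is a path distinct from $\alpha$. Hence $Q$ is acyclic. By Proposition~\ref{prop:alpha.not.cycle.separating}, $Q$ is then separating, and Corollary~\ref{cor:Q.separating.equiv.projective} gives $\OmegaoneD\simeq\bigoplus_{\alpha\in Q_1}P_\alpha$. Consequently
\begin{align*}
    \Hom_{\KQ}(\OmegaoneD,\OmegaoneD)\simeq\bigoplus_{\alpha,\beta\in Q_1}\Hom_{\KQ}(P_\alpha,P_\beta).
\end{align*}

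Next, we kill the off-diagonal terms. Since no $\alpha$ is a bypass, Corollary~\ref{cor:Hom.zero.no.parallel} gives $\Hom_{\KQ}(P_\alpha,P_\beta)=0$ for $\beta\neq\alpha$. This follows because $\Ppara=\{\alpha\}$ forces $\Qpalpha=\{\alpha\}$, after which Lemma~\ref{lemma:Hom.Pa.Pb.parallel} applies.

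For the diagonal terms, Proposition~\ref{prop:Homab.iso.stKQ} identifies $\Hom_{\KQ}(P_\alpha,P_\alpha)$ with $s(\alpha)\KQ s(\alpha)\otimesK t(\alpha)\KQ t(\alpha)$. By acyclicity, $u\KQ u=\K u$ for every vertex $u$, since any path from $u$ to $u$ of positive length would be a cycle. Hence each diagonal summand is one-dimensional, spanned by $s(\alpha)\otimesK t(\alpha)$, i.e.\ the endomorphism is $d\alpha\mapsto\lambda\,d\alpha$. Summing over the $|Q_1|$ edges gives $\Hom_{\KQ}(\OmegaoneD,\OmegaoneD)\simeq\K^{|Q_1|}$.

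The only step needing care is the first: justifying that nonpassing implies acyclic, hence separating. This is what permits the use of the projective decomposition $\OmegaoneD\simeq\bigoplus_\alpha P_\alpha$. A secondary point is that $s(\alpha)\KQ s(\alpha)=\K s(\alpha)$ requires the absence of all cycles through $s(\alpha)$, not merely of cycles through $\alpha$. This again follows from acyclicity: a cycle $c$ at $s(\alpha)$ would give $c\alpha\in\Ppara$ with $c\alpha\neq\alpha$, and similarly for $t(\alpha)$. So the nonpassing condition alone suffices.
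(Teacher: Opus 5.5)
Your proof is correct and follows the paper's argument. You decompose $\OmegaoneD\simeq\bigoplus_{\alpha\in Q_1}P_\alpha$, remove the off-diagonal summands via Corollary~\ref{cor:Hom.zero.no.parallel}, and use Proposition~\ref{prop:Homab.iso.stKQ} to show that each diagonal summand is $\K\angles{s(\alpha)\otimesK t(\alpha)}$. Your explicit check that nonpassing implies acyclic, hence separating, is a worthwhile addition: it is what makes the decomposition into the $P_\alpha$ available, and the paper leaves this step implicit.
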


\begin{proof}
    If $Q$ is a nonpassing quiver then $\alpha$ is not a bypass for any $\alpha\in Q_1$. Corollary~\ref{cor:Hom.zero.no.parallel} then implies that every $f\in\Hom_{\KQ}(\OmegaoneD,\OmegaoneD)$ satisfies
    \begin{align*}
        f(d\alpha) = \lambda_{\alpha}d\alpha
    \end{align*}
    for some $\lambda_{\alpha}\in\K$. Conversely, every such choice of $\lambda_{\alpha}\in\K$ defines an element $f\in\Hom_{\KQ}(\OmegaoneD,\OmegaoneD)$ by Proposition~\ref{prop:Homab.iso.stKQ}.
\end{proof}

\noindent
In Section~\ref{sec:derivation.based.calculi} we recalled that the set of connections is parametrized by elements $A\in\Hom_{\KQ,\K}(\OmegaoneD\times\D_2,\OmegaoneD)$ giving 
\begin{align*}
    \nabla = \nabla^0 + A
\end{align*}
for an arbitrary fixed connection $\nabla^0$. Thus, given $\d\in\D_2$ we note that $A(\cdot,\d)\in\Hom_{\KQ}(\OmegaoneD,\OmegaoneD)$ for which the above results apply.  

\begin{lemma}\label{lemma:wedgeA.Qpar}
    Let $Q$ be a separating quiver and let $A\in\Hom_{\KQ,\K}(\OmegaoneD\times\D_2,\OmegaoneD)$. If $\alpha\in Q_1$ and $(\wedge A)(d\alpha)=0$ then
    \begin{align*}
        A(d\alpha,D_{\beta,p}) = 0
    \end{align*}
    for all $D_{\beta,p}\in\D_2$ such that $\beta\notin\Qpalpha$.
\end{lemma}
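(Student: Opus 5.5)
The plan is to use the locality of bimodule endomorphisms (Lemma~\ref{lemma:Hom.Pa.Pb.parallel}) to show that each $A(d\alpha,\d)$ only involves the components $\angles{d\gamma}$ with $\gamma\in\Qpalpha$. Such forms vanish on derivations $D_{\beta,p}$ with $\beta\notin\Qpalpha$. The condition $(\wedge A)(d\alpha)=0$ then transfers this vanishing to the forms $A(d\alpha,D_{\beta,p})$ themselves.

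\emph{Step 1 (locality).} Fix $\d\in\D_2$ and note that $A(\cdot,\d)\in\Hom_{\KQ}(\OmegaoneD,\OmegaoneD)$. Since $Q$ is separating, Corollary~\ref{cor:Q.separating.equiv.projective} gives $\OmegaoneD=\bigoplus_{\gamma\in Q_1}\angles{d\gamma}$ with $\angles{d\gamma}\simeq P_\gamma$ via $\phi_\gamma$. Under this isomorphism $d\alpha$ corresponds to $s(\alpha)\otimesK t(\alpha)\in P_\alpha$. Composing $A(\cdot,\d)$ restricted to $\angles{d\alpha}$ with the projection onto $\angles{d\gamma}$ gives an element of $\Hom_{\KQ}(P_\alpha,P_\gamma)$. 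By Lemma~\ref{lemma:Hom.Pa.Pb.parallel} this element is zero unless $\gamma\in\Qpalpha$. Hence
\begin{align*}
    A(d\alpha,\d)\in\bigoplus_{\gamma\in\Qpalpha}\angles{d\gamma}\qquad\text{for every }\d\in\D_2.
\end{align*}

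\emph{Step 2 (evaluation).} Any element of $\angles{d\gamma}$ vanishes on $D_{\delta,q}$ whenever $\delta\neq\gamma$, since $\paraa{r(d\gamma)r'}(D_{\delta,q})=rD_{\delta,q}(\gamma)r'=0$; this extends to sums by linearity. Combining with Step 1, for $\beta\notin\Qpalpha$ we get
\begin{align*}
    A(d\alpha,\d)(D_{\beta,q})=0\qquad\text{for all }\d\in\D_2\text{ and all }q\in\Ppar{\beta}.
\end{align*}

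\emph{Step 3 (use $\wedge A=0$).} Let $\beta\notin\Qpalpha$ and $p\in\Ppar{\beta}$. For any basis derivation $D_{\gamma,q}$,
\begin{align*}
    0=(\wedge A)(d\alpha)(D_{\beta,p},D_{\gamma,q})
    =A(d\alpha,D_{\beta,p})(D_{\gamma,q})-A(d\alpha,D_{\gamma,q})(D_{\beta,p}).
\end{align*}
By Step 2 (with $\d=D_{\gamma,q}$) the last term vanishes, so $A(d\alpha,D_{\beta,p})(D_{\gamma,q})=0$. Since the $D_{\gamma,q}$ form a basis of $\D_2$ and forms are $\K$-linear, $A(d\alpha,D_{\beta,p})=0$.

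The main obstacle is Step 1. It requires carefully identifying $A(\cdot,\d)$ restricted to $\angles{d\alpha}$ with a family of homomorphisms $P_\alpha\to P_\gamma$, which uses the separating hypothesis through the isomorphisms $\phi_\gamma$. The rest is a direct evaluation.
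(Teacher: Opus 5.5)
Your proof is correct and follows essentially the same route as the paper: the paper also invokes Lemma~\ref{lemma:Hom.Pa.Pb.parallel} to conclude that $A(d\alpha,D_{\gamma,q})$ has no component in $\angles{d\beta}\simeq P_\beta$, so it vanishes on $D_{\beta,p}$, and then uses $(\wedge A)(d\alpha)=0$ to show that $A(d\alpha,D_{\beta,p})$ itself vanishes. Your Steps~1 and~2 simply spell out the one-line ``no component in $P_\beta$'' step of the paper in more detail.
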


\begin{proof}
    Since $\beta\notin\Qpalpha$, it follows from Lemma~\ref{lemma:Hom.Pa.Pb.parallel} that 
    \begin{align*}
        \Hom_{\KQ}(P_\alpha,P_{\beta}) = 0
    \end{align*}
    implying that $A(d\alpha,D_{\gamma,q})$ has no component in $P_\beta$ and, consequently, that
    \begin{align*}
        A(d\alpha,D_{\gamma,q})(D_{\beta,p})=0
    \end{align*}
    for all $D_{\gamma,q}\in\D_2$. Since $(\wedge A)(d\alpha)=0$ it follows that
    \begin{align*}
        0 = A(d\alpha,D_{\beta,p})(D_{\gamma,q})-A(d\alpha,D_{\gamma,q})(D_{\beta,p})
        =A(d\alpha,D_{\beta,p})(D_{\gamma,q})
    \end{align*}
    for all $D_{\gamma,p}\in\D_2$ which shows that $A(d\alpha,D_{\beta,p})=0$.
\end{proof}

\noindent
Let us apply the above result to the case when $\alpha\in Q_1$ is not a bypass. 
Thus, if $(\wedge A)(d\alpha)=0$ then $A(d\alpha,\d)=0$ unless $\d=D_{\alpha,\alpha}$, i.e there exists $\lambda_{\alpha}\in\K$ such that
\begin{align*}
    A(d\alpha,\d) = 
    \begin{cases}
        \lambda_{\alpha}d\alpha &\text{if }\d=D_{\alpha,\alpha}\\
        0 &\text{otherwise}
    \end{cases}
\end{align*}
implying that for a nonpassing quiver, $A\in\ker(\wedge)$ is determined by
\begin{align*}
    A(d\alpha,D_{\beta,\beta}) = \delta_{\alpha\beta}\lambda_{\alpha}d\alpha.
\end{align*}
for $\lambda_{\alpha}\in\K$. Conversely, any $A\in\Hom_{\KQ}(P_\alpha,P_\alpha)$ defined by the above relation satisfies $\wedge A=0$.

\section{Bimodule connections}

\noindent
Let us now turn our attention to finding connections on the bimodule of differential forms. We shall in the following assume that the quiver is separating, implying that $\OmegaoneD$ is a projective bimodule and that
\begin{align*}
    \OmegaoneD \simeq \bigoplus_{\alpha\in Q_1} P_{\alpha}.
\end{align*}
On projective modules, one can define the Grassmann connection by starting from a connection on the free module and then composing it with the projection. To this end, 
one defines a bimodule connection on the free bimodule $\KQ^e=\KQ\otimesK \KQ^{\textrm{op}}$ as
\begin{align*}
    \nabla_{\d}(a\otimesK b) = (\d a)\otimesK b + a\otimesK(\d b), 
\end{align*}
together with the projection
\begin{align*}
    p_{\alpha}:\KQ^e&\to\KQ^e\\
    a\otimesK b&\mapsto as(\alpha)\otimesK t(\alpha)b.
\end{align*}
Setting
\begin{align*}
    p = \bigoplus_{\alpha\in Q_1}p_{\alpha}:(\KQ^e)^{|Q_1|}\to(\KQ^e)^{|Q_1|}.
\end{align*}
one defines the Grassmann connection on $\OmegaoneD$ as
\begin{align*}
    \nablaz_{\d} = p\circ\nabla:\OmegaoneD\to\OmegaoneD
\end{align*}
giving
\begin{align}\label{eq:grassmann.conn}
    \nablaz_{\d}a(d\alpha)b = (\d a)(d\alpha)b + a(d\alpha)(\d b)
\end{align}
for $a,b\in\KQ$ and $\d\in\D_2$. In particular, we note that $\nablaz_{\d}d\alpha=0$ for all $\alpha\in Q_1$ and $\d\in\D_2$, implying that the Grassmann connection is torsion free since
\begin{align*}
    \paraa{\nablaz_{\d}d\alpha}(\d')-\paraa{\nablaz_{\d'}d\alpha}(\d)- (d^2\alpha)(\d,\d')
    =  0
\end{align*}
for all $\alpha\in Q_1$ and $\d,\d'\in\D_2$.
Consequently, all bimodules connections on $\OmegaoneD$ are obtained as
\begin{align*}
    \nabla = \nablaz + A
\end{align*}
where $A\in\Hom_{\KQ,\K}(\OmegaoneD\times\g,\OmegaoneD)$ and $\nabla$ is torsion free if and only if $\wedge A=0$.

\subsection{Bimodule forms}

Let us now consider bimodule forms on $\OmegaoneD$. Since 
\begin{align*}
    \OmegaoneD\simeq \bigoplus_{\alpha\in Q_1}P_\alpha
\end{align*}
one may write a bimodule form $g:\OmegaoneD\otimesK\OmegaoneD\to\KQ$ as
\begin{align*}
    g = \bigoplus_{\alpha,\beta\in Q_1}g_{\alpha\beta}\qquad g_{\alpha\beta}:P_\alpha\otimesK P_\beta\to \KQ.
\end{align*}
We note that 
\begin{align*}
    g(d\alpha p,qd\beta) = s(\alpha)g(d\alpha p,qd\beta)t(\beta)\implies
    g(d\alpha p,qd\beta)\in s(\alpha)\KQ t(\beta).
\end{align*}
Conversely, since $\{p\otimesK q:(p,q)\in B_\alpha\}$ is a basis of $P_\alpha$, given 
\begin{align*}
    g_{(\alpha,p)(q,\beta)}\in s(\alpha)\KQ t(\beta)
\end{align*}
for $\alpha,\beta\in Q_1$ and $p\in S_{\alpha}$, $q\in T_{\beta}$, one defines a bimodule form $g$ as
\begin{align*}
    g(r d\alpha p,qd\beta s)
    =rg_{(\alpha,p)(q,\beta)}s.
\end{align*}
For instance, for any $c\in\KQ$ one can choose $g_{(\alpha,p)(q,\beta)}=\alpha pcq\beta$ giving
\begin{align*}
    g_c(rd\alpha p,qd\beta s) = r\alpha pcq\beta s. 
\end{align*}
If $c\in Z(\KQ)$ then $g_c$ is a strong bimodule form. Let us now present a few examples to illustrate the different properties of bimodule forms.

\begin{example}
    Let $Q$ be the quiver $A_2:$
    \begin{displaymath}
        \begin{tikzcd}
            \bullet \arrow[r, "\alpha"] & \bullet
        \end{tikzcd}
    \end{displaymath}
    for which $\OmegaoneD=\{\lambda d\alpha:\lambda\in\K\}$. Defining a bimodule form $g$ as
    \begin{align*}
        g(\lambda d\alpha,\mu d\alpha) = \lambda\mu\alpha
    \end{align*}
    if follows that $g$ is both left and right nondegenerate.
\end{example}

\begin{example}
    Let $Q$ be the Kronecker quiver
    \begin{displaymath}
        \begin{tikzcd}
            \bullet
            \arrow[r, bend left=25, "\alpha"]
            \arrow[r, bend right=25, swap, "\beta"]
            & \bullet
        \end{tikzcd}
    \end{displaymath}   
    giving $\OmegaoneD=\{\lambda d\alpha + \mu d\beta:\lambda,\mu\in\K\}$. Define a bimodule form $g$ as
    \begin{align*}
        &g(d\alpha,d\alpha) = \alpha\qquad 
        g(d\alpha,d\beta) = \beta \\
        &g(d\beta,d\alpha) = g(d\beta,d\beta) = 0.
    \end{align*}
    It is easy to check that $\gh_L$ is injective but $\gh_R$ is not injective since $\gh_R(d\beta)=0$. Hence, $g$ is left nondegenerate but not right nondegenerate.
\end{example}

\begin{example}
    Let $Q$ be the quiver $A_3$
    \begin{displaymath}
        \begin{tikzcd}
            \bullet 
            \arrow[r, "\alpha"] 
            & \bullet 
            \arrow[r, "\beta"] 
            & \bullet 
        \end{tikzcd}
    \end{displaymath}
    implying that $\OmegaoneD$ is the $\K$-linear span of $\{d\alpha,d\beta,(d\alpha)\beta,\alpha d\beta\}$. The bimodule form $g$ given by
    \begin{align*}
        &g(d\alpha,d\alpha) = 0\quad g(d\alpha,d\beta)=\alpha\beta\quad 
        g(d\alpha,\alpha d\beta) = 0\\
        &g(d\beta,d\alpha) = g(d\beta,d\beta) = g(d\beta,\alpha d\beta) = 0\\
        &g\paraa{(d\alpha)\beta,d\alpha} =
        g\paraa{(d\alpha)\beta,d\beta}=g\paraa{(d\alpha),\alpha d\beta} = 0,
    \end{align*}
    which is of the form $g_c$ for $c=\mid$, is a nonzero bimodule form which is neither left nondegenerate nor right nondegenerate.
\end{example}

\begin{example}
    The previous example can be generalized to quivers which has either a source or a sink. For instance, acyclic quivers have at least one source and at least one sink. Thus, let $Q=(Q_0,Q_1)$ be a quiver, let $v\in Q_0$ be a source and let $\alpha_0\in Q_1$ be such that $s(\alpha_0)=v$. The bimodule form $g_c$ defined by $g_c\paraa{d\alpha p,q d\beta}=\alpha p c q \beta$ for arbitrary $c\in\KQ$ (as above) gives
    \begin{align*}
        g(d\beta p,d\alpha_0) = \beta p c \alpha_0 = 0
    \end{align*}
    for all $\beta\in Q_1$ and $p\in\P(Q)$, since $s(\alpha_0)$ is a source and every nonzero term in $\beta p c$ is a path of length at least one. Hence, $g$ is left degenerate since $\gh_L(d\alpha_0)=0$.

    Similarly, if there exists a sink in the quiver then one shows that $g$ is right degenerate. In particular, $g_c$ is both left and right degenerate for an acylic quiver.
\end{example}

\begin{example}\label{ex:loop.invertible.metric}
    Let $Q$ be the quiver
    \[
    \begin{tikzcd}
        \bullet\ar[loop,swap,"\alpha"]
    \end{tikzcd}
    \]
    giving $\KQ=\K[\alpha]$ and $\OmegaoneD=\K[\alpha]d\alpha$. Furthermore, we note that $\alpha^n d\alpha = (d\alpha)\alpha^n$ and the quiver is not separating since $\alpha$ is part of a unique cycle. Since the quiver is not separating we have to take care in defining a bimodule form. However, since $\{\alpha^n d\alpha: n\geq 0\}$ is a vector space basis of $\OmegaoneD$ one can define
    \begin{align*}
        g(\alpha^n d\alpha,\alpha^m d\alpha) = \alpha^{n+m}.
    \end{align*}
    Let us now show that $g$ is right invertible, and start by computing
    \begin{align*}
        \gh_R(\alpha^n d\alpha)(\alpha^md\alpha)
        = g(\alpha^n d\alpha,\alpha^m d\alpha) = \alpha^{m+n}.
    \end{align*}
    It follows that $\gh_R$ is injective since
    \begin{align*}
        \gh_R(p(\alpha)d\alpha) = 0\implies
        \gh_R(p(\alpha)d\alpha)(d\alpha) = 0\implies
        p(\alpha) =0\implies p(\alpha)d\alpha = 0,
    \end{align*}
    for $p(\alpha)\in\K[\alpha]$. Now, let $\phi\in(\OmegaoneD)^\ast$. The element $\phi$ is completely determined by $\phi(d\alpha)\equiv\phi_0\in\K[\alpha]$, and we note that
    \begin{align*}
        \gh_R(\phi_0 d\alpha)(d\alpha) = \phi_0 = \phi(d\alpha)
    \end{align*}
    implying that $\gh_R(\phi_0d\alpha)=\phi$. This shows that $\gh_R$ is bijective. Similarly, one can show that $\gh_L$ is bijective. Hence, $g$ is left and right invertible. 
\end{example}

\noindent In Example~\ref{ex:loop.invertible.metric} we have illustrated that certain quivers may have invertible bimodule forms. However, this is not the situation in general; namely, for acyclic quivers, where $\OmegaoneD$ and $(\OmegaoneD)^\ast$ are finite dimensional vector spaces, one can show that their dimensions differ, implying that no invertible bimodule form exists.

\begin{proposition}
    If $Q=(Q_0,Q_1)$ is an acyclic quiver and $|Q_1|>0$ then 
    \begin{align*}
        \dim_{\K}((\OmegaoneD)^\ast)-\dim_{\K}(\OmegaoneD)\geq |\Pp(Q)|.
    \end{align*}
\end{proposition}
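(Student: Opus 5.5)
We reduce both dimensions to counts of paths, using that an acyclic quiver is separating, and then prove the resulting inequality by an injection of pairs of paths. Here we read $(\OmegaoneD)^\ast$ as the dual $({}_{\KQ}\OmegaoneD)^\ast$ of $\OmegaoneD$ as a left module. The dual as a right module is handled by the same argument applied to the opposite quiver, which interchanges sources and targets.

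\textbf{Step 1: the two dimensions.} Since $Q$ is acyclic, it is separating, so Proposition~\ref{prop:Omegaone.left.module.decomposition} and Corollary~\ref{cor:Q.separating.equiv.projective} give
\begin{align*}
    \OmegaoneD\simeq\bigoplus_{\alpha\in Q_1}\paraa{\KQ s(\alpha)}^{\dim t(\alpha)\KQ}
\end{align*}
as left modules. Because $Q$ is acyclic, $\P(Q)$ is finite, so everything is finite dimensional. For an idempotent $e$ one has $\Hom_{\KQ}(\KQ e,\KQ)\simeq e\KQ$ via $f\mapsto f(e)$. Hence
\begin{align*}
    (\OmegaoneD)^\ast\simeq\bigoplus_{\alpha\in Q_1}\paraa{s(\alpha)\KQ}^{\dim t(\alpha)\KQ}.
\end{align*}
Write $a_v=|S_v|=\dim v\KQ$ and $b_v=|T_v|=\dim\KQ v$. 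Then the difference of dimensions is
\begin{align*}
    \Delta=\sum_{\alpha\in Q_1}a_{t(\alpha)}\paraa{a_{s(\alpha)}-b_{s(\alpha)}}.
\end{align*}

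\textbf{Step 2: grouping by vertices.} Every path from $v$ of positive length is uniquely $\alpha q$ with $s(\alpha)=v$ and $q\in S_{t(\alpha)}$. Therefore $\sum_{\alpha:s(\alpha)=v}a_{t(\alpha)}=a_v-1$, and
\begin{align*}
    \Delta=\sum_{v}(a_v-1)(a_v-b_v)=\sum_v a_v^2-\sum_v a_vb_v,
\end{align*}
where we used $\sum_v a_v=\sum_v b_v=|\P(Q)|$, so the linear terms cancel. Now $\sum_v a_vb_v$ counts composable pairs $(p,q)$ with $t(p)=s(q)$. Each path $r$ arises from exactly $l(r)+1$ such pairs, namely its splittings. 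Hence
\begin{align*}
    \sum_v a_vb_v=|Q_0|+\sum_{r\in\Pp(Q)}\paraa{l(r)+1}.
\end{align*}
On the other side, $\sum_v a_v^2$ counts ordered pairs $(p,q)$ of paths with $s(p)=s(q)$.

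\textbf{Step 3: the injection.} This is the main step. We exhibit enough disjoint families of pairs with a common source:
\begin{enumerate}
    \item the diagonal pairs $(p,p)$, of which there are $|Q_0|+|\Pp(Q)|$;
    \item the pairs $(s(r),r)$ and $(r,s(r))$ for $r\in\Pp(Q)$, of which there are $2|\Pp(Q)|$;
    \item the pairs $(p,r)$ with $r\in\Pp(Q)$ and $p$ a proper prefix of $r$ of positive length, of which there are $\sum_{r}\paraa{l(r)-1}$.
\end{enumerate}
These families are pairwise disjoint. In (3) both entries have positive length, so no pair of (3) lies in (2). Acyclicity gives $p\neq r$ for a proper prefix, so no pair of (3) lies in (1). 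Consequently
\begin{align*}
    \sum_v a_v^2\geq|Q_0|+3|\Pp(Q)|+\sum_r\paraa{l(r)-1}=|Q_0|+\sum_r\paraa{l(r)+1}+|\Pp(Q)|,
\end{align*}
which is exactly $\Delta\geq|\Pp(Q)|$.

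The remaining points to check are the identification of the dual in Step 1, and that the decomposition of Proposition~\ref{prop:Omegaone.left.module.decomposition} is compatible with taking duals summand by summand. The latter holds because a finite direct sum dualizes to the direct sum of the duals.
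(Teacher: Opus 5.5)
Your proof is correct. Step 1 matches the paper, which also dualizes the left-module decomposition of $P_\alpha$ summand by summand; the counting is organized differently.

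The paper stays at the level of index sets. The bijections $(p,\alpha,q)\mapsto(p,\alpha q)$ and $(\alpha,p,q)\mapsto(\alpha p,q)$ are the set-level form of your identity $\sum_{\alpha:s(\alpha)=v}a_{t(\alpha)}=a_v-1$. They index the two bases by $M_2=\{(p,r):r\in\Pp(Q),\,t(p)=s(r)\}$ and $N_2=\{(r,q):r\in\Pp(Q),\,s(r)=s(q)\}$. A single injection $M_2\to N_2$, $(p,r)\mapsto(pr,p)$, then misses every diagonal pair $(p,p)$ with $p\in\Pp(Q)$. You instead add $|\P(Q)|$ to both counts (your cancellation of the linear terms), count composable pairs through splittings, and list pairs with a common source family by family.

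The combinatorial core is the same. After removing the $|\P(Q)|$ added pairs $(s(q),q)$, your pairs $(r,s(r))$ together with family (3) number exactly $|M_2|=\sum_{r\in\Pp(Q)}l(r)$. Up to swapping the entries in family (3), they coincide with the paper's image $\{(pr,p)\}$, and the diagonal over $\Pp(Q)$ is the surplus in both arguments.

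The paper's map is shorter and avoids the bookkeeping. Your route has the advantage of producing the closed formula $\Delta=\sum_v a_v^2-\sum_v a_vb_v$. Your remark on the right dual via the opposite quiver also addresses a reading of $(\OmegaoneD)^\ast$ that the paper leaves implicit.

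One small correction: in family (3), $p\neq r$ follows from $l(p)<l(r)$, not from acyclicity. Acyclicity is needed only for finiteness and for $Q$ being separating.
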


\begin{proof}
    An acylic quiver is separating which implies that 
    \begin{align*}
        \OmegaoneD =  \bigoplus_{\alpha\in Q_1}P_{\alpha}
    \end{align*}    
    and, as previously noted, a set of basis elements (as a vector space) can be given as
    \begin{align*}
        M_1 = \{(p,\alpha,q): \alpha\in Q_1,\, (p,q)\in B_{\alpha}\}
    \end{align*}
    which is in bijection with 
    \begin{align*}
        M_2 = \{(p,r):p\in\P(Q),\,r\in\Pp(Q),\,t(p)=s(r)\}
    \end{align*}
    via $(p,\alpha,q)\mapsto (p,\alpha q)$. As a left module, $P_{\alpha}$ decomposes as
    \begin{align*}
        P_{\alpha}\simeq\hspace{-2mm}
        \bigoplus_{p:\,s(p)=t(\alpha)}\hspace{-2mm}
        \KQ s(\alpha)
    \end{align*}
    implying that the dual $P_\alpha^\ast$ decomposes as
    \begin{align*}
        P_{\alpha}\simeq\hspace{-2mm}
        \bigoplus_{p:\,s(p)=t(\alpha)}\hspace{-2mm}
        s(\alpha)\KQ
    \end{align*}
    as a right module, and consequently
    \begin{align*}
        (\OmegaoneD)^\ast = \bigoplus_{\alpha\in Q_1}
        \bigoplus_{p:\, s(p)=t(\alpha)}
        s(\alpha)\KQ
    \end{align*}
    for which a set of basis elements can be identified with
    \begin{align*}
        N_1 = \{(\alpha,p,q):\alpha\in Q_1,\,p,q\in\P(Q),\, s(p)=t(\alpha),\,s(q)=s(\alpha)\}
    \end{align*}
    which is in bijection with
    \begin{align*}
        N_2 = \{(r,q):r\in\Pp(Q),q\in\P(Q),\, s(r)=s(q)\}
    \end{align*}
    via $(\alpha,p,q)\mapsto (\alpha p,q)$.

    Let us now define
    \begin{align*}
        \phi:M_2&\to N_2\\
        (p,r)&\mapsto (pr,p),
    \end{align*}
    and it directly follows that $\phi$ is injective. Now, assume that $Q_1\neq\emptyset$ and let $p\in\Pp(Q)$. Then $(p,p)\in N_2$ but $(p,p)\notin\im(\phi)$ since
    \begin{align*}
        \phi(q,r) = (p,p)\implies (qr,q) = (p,p)
    \end{align*}
    which is impossible since $r\in\Pp(Q)$. Hence, $|N_2|-|M_2|\geq |\Pp(Q)|$, implying that $\dim_{\K}((\OmegaoneD)^\ast)-\dim_{\K}(\OmegaoneD)\geq |\Pp(Q)|$.
\end{proof}

\begin{corollary}
    If $Q=(Q_0,Q_1)$ is an acyclic quiver with $Q_1\neq\emptyset$ then there exists no invertible bimodule form on $\OmegaoneD$.
\end{corollary}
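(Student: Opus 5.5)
The plan is to argue by contradiction, using the dimension count of the preceding proposition. Suppose $g$ is an invertible bimodule form on $\OmegaoneD$, so that at least one of $\gh_L$, $\gh_R$ is bijective. Since $Q$ is acyclic, it is separating (Proposition~\ref{prop:alpha.not.cycle.separating}). Hence, by Corollary~\ref{cor:Q.separating.equiv.projective}, $\OmegaoneD\simeq\bigoplus_{\alpha\in Q_1}P_\alpha$ is a finitely generated projective bimodule. Moreover, by Proposition~\ref{prop:Omegaone.left.module.decomposition}, it is a finite direct sum of modules $\KQ s(\alpha)$ as a left module; the symmetric argument gives the analogous decomposition as a right module. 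As noted after the earlier proposition on reflexive modules, $\OmegaoneD$ is therefore reflexive on both sides, and bijectivity of $\gh_L$ is equivalent to bijectivity of $\gh_R$. It thus suffices to rule out $\gh_R$ being a bijection.

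If $\gh_R:\OmegaoneD\to(\OmegaoneD_{\KQ})^\ast$ were bijective, it would in particular be a $\K$-linear isomorphism. Acyclicity gives $|\P(Q)|<\infty$, so both spaces are finite-dimensional over $\K$. Hence
\begin{align*}
    \dim_\K\OmegaoneD=\dim_\K(\OmegaoneD)^\ast .
\end{align*}
The preceding proposition, however, gives
\begin{align*}
    \dim_\K((\OmegaoneD)^\ast)-\dim_\K(\OmegaoneD)\geq|\Pp(Q)|\geq|Q_1|>0,
\end{align*}
which is a contradiction. The argument for $\gh_L$ is symmetric: either apply the proposition to the opposite quiver, whose path algebra is $\KQop$, which swaps left and right, or simply invoke the reflexivity equivalence above.

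The main point needing care is that the proposition computes the dual with respect to one particular side. I must make sure that the dual appearing there, built from the left decomposition, is the same one targeted by the relevant map $\gh_L$ or $\gh_R$. If the proposition's dual is the left dual $({}_{\KQ}\OmegaoneD)^\ast$, then the contradiction directly rules out bijectivity of $\gh_L$, and reflexivity transfers the conclusion to $\gh_R$. Either way, neither map can be bijective, so no invertible bimodule form exists.
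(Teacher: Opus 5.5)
Your proposal is correct and is essentially the paper's argument: the corollary follows immediately from the preceding dimension inequality, because a bijective $\gh_L$ or $\gh_R$ would be a $\K$-linear isomorphism between finite-dimensional spaces of different dimensions. To resolve your hedge, the paper's proof of that proposition dualizes the left decomposition of $P_\alpha$, so it computes $({}_{\KQ}\OmegaoneD)^\ast$, the target of $\gh_L$; your reflexivity (or opposite-quiver) step then correctly rules out $\gh_R$ as well.
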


\subsection{Levi-Civita connections}

In this section, we will provide examples of Levi-Civita connections on quivers; i.e. torsion free connections on $\OmegaoneD$ that are compatible with a bimodule form. It is a general feature of noncommutative geometry that unless further assumptions are made, the Levi-Civita connection is in general not unique, even in the case of invertible forms (see e.g. \cite{a:levi-civita.class.nms,ah:nc.levi.civita} in the context of derivation based calculi). However, in the example we consider below, there is a weaker form of nondegeneracy which turns out to be sufficient to guarantee that the Levi-Civita connection is unique. To this end, let us introduce weakly nondegenerate bimodule forms.  

\begin{definition}
    Let $Q=(Q_0,Q_1)$ be a quiver. A bimodule form $g$ on $\OmegaoneD$ is called weakly nondegenerate if for each $\alpha\in Q_1$ there exists $p,q\in \P(Q)$ and $\beta\in Q_1$ such that 
    \begin{align*}
        g(d\alpha p,qd\beta)\neq 0\quad\text{or}\quad g(d\beta p,qd\alpha)\neq 0.
    \end{align*}
\end{definition}

\noindent
Let us now consider Levi-Civita connections on nonpassing quivers, compatible with bimodule forms of the type $g_c$ for $c\in\KQ$. Recall from Section~\ref{sec:endomorphisms} that, in this case, the structure of $\Hom_{\KQ}(\OmegaoneD,\OmegaoneD)$ is particularly simple, and every torsion free connection can be written as
\begin{align*}
    \nabla_{D_{\beta,\beta}}d\alpha 
    = \nabla^0_{D_{\beta,\beta}}d\alpha + A(d\alpha,D_{\beta,\beta})
    = \delta_{\alpha\beta}\lambda_{\alpha}d\alpha
\end{align*}
for $\lambda_{\alpha}\in\K$. This allows us to prove the following result.

\begin{proposition}
    If $Q=(Q_0,Q_1)$ is a nonpassing quiver then there exists a torsion free connection compatible with $g_c$ if and only if 
    there exists $\tilde{c}\in\K\angles{Q_0}$ such that $g_c=g_{\tilde{c}}$. Moreover, if $g_c$ is weakly nondegenerate then the connection is uniquely given by 
    \begin{align}
        \nabla_{D_{\beta,\beta}}d\alpha = \delta_{\alpha\beta}d\alpha
    \end{align}
    for $\alpha\in Q_1$ and $D_{\beta,\beta}\in\D_2$.
\end{proposition}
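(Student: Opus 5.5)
The plan is to reduce everything to the explicit parametrization of torsion free connections from Section~\ref{sec:endomorphisms}, and then compare coefficients path by path.

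First I record two structural facts about a nonpassing quiver. (i) Since $\Ppara=\{\alpha\}$ for all $\alpha\in Q_1$, the Lie algebra $\D_2$ has basis $\{D_{\beta,\beta}:\beta\in Q_1\}$, so it suffices to test compatibility on these derivations. Moreover $D_{\beta,\beta}$ acts on a path by multiplying it by $n_\beta$, the number of occurrences of $\beta$ in it. (ii) $Q$ is acyclic. Indeed, if some cycle contained an edge $\alpha$, I would rotate it to the form $c=\alpha s$; then $c\alpha\in\Ppara$ with $c\alpha\neq\alpha$, so $\alpha$ would be a bypass. Hence every path contains each edge at most once. By the discussion preceding the statement, any torsion free connection satisfies $\nabla_{D_{\beta,\beta}}d\alpha=\delta_{\alpha\beta}\lambda_\alpha d\alpha$ for some $\lambda_\alpha\in\K$, and it extends to general elements by the Leibniz rules.

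Next I write out the compatibility condition on the generators $r\,d\alpha\,p$ and $q\,d\gamma\,s$, where $r,p,q,s$ are paths and $\alpha,\gamma\in Q_1$. Expanding $D_{\beta,\beta}(r\alpha pcq\gamma s)$ by the derivation rule and subtracting the right-hand side, the terms involving $r,p,q,s$ cancel against the Leibniz terms of $\nabla$. The condition therefore reduces to
\begin{align*}
    r\alpha p\paraa{(\delta_{\alpha\beta}\mu_\alpha+\delta_{\gamma\beta}\mu_\gamma)c+D_{\beta,\beta}(c)}q\gamma s=0,\qquad \mu_\alpha=1-\lambda_\alpha,
\end{align*}
for all $\beta$. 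It suffices to take $r,s$ to be vertices. Expand $c=\sum_w c_w w$ over paths $w$. The paths $\alpha p w q\gamma$ are distinct for distinct $w$, so the condition becomes coefficientwise: whenever $c_w\neq0$ and $\alpha pwq\gamma\neq0$, we need $n_\beta(w)+\delta_{\alpha\beta}\mu_\alpha+\delta_{\gamma\beta}\mu_\gamma=0$ for every $\beta$.

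I expect the main obstacle to be this step, namely ruling out cancellations between edge counts and the constants $\mu$. Acyclicity from (ii) should dispose of it. Since $\alpha pwq\gamma$ is a path, $w$ cannot contain $\alpha$ or $\gamma$. For any other edge $\beta$ of $w$ the condition reads $n_\beta(w)=0$, which is a contradiction. Hence only the terms of $c$ with $w\in Q_0$ contribute. Setting $\tilde c$ to be the degree-zero part of $c$, this gives $g_c=g_{\tilde c}$ whenever a compatible torsion free connection exists. Conversely, if $g_c=g_{\tilde c}$ with $\tilde c\in\K\angles{Q_0}$, I choose $\lambda_\alpha=1$, that is $\mu_\alpha=0$, for all $\alpha$. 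Then every condition holds, because $D_{\beta,\beta}(\tilde c)=0$.

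For uniqueness, I use that for $w$ a vertex the conditions become $\delta_{\alpha\beta}\mu_\alpha+\delta_{\gamma\beta}\mu_\gamma=0$ for all $\beta$, whenever $g(d\alpha p,qd\gamma)\neq0$. By acyclicity $\alpha\neq\gamma$, since otherwise $\alpha p\tilde cq\alpha$ would be a cycle. Taking $\beta=\alpha$ and $\beta=\gamma$ then gives $\mu_\alpha=\mu_\gamma=0$. Weak nondegeneracy guarantees that every edge $\alpha$ occurs in such a nonzero pairing, either in the first or in the second slot. Hence $\lambda_\alpha=1$ for all $\alpha$, which is the stated formula $\nabla_{D_{\beta,\beta}}d\alpha=\delta_{\alpha\beta}d\alpha$.
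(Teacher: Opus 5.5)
Your proposal is correct and follows essentially the paper's argument. You parametrize torsion free connections by $\lambda_\alpha$, expand the compatibility condition for $g_c$, and use acyclicity: $D_{\gamma,\gamma}$ acts by $1$ on any surviving path of $c$ that contains an edge $\gamma\neq\alpha,\beta$, which kills the positive-length part of $c$, and weak nondegeneracy then forces $\lambda_\alpha=1$. Your coefficientwise formulation, together with the explicit check that nonpassing implies acyclic (which the paper uses without comment), is just a slightly cleaner packaging of the same proof.
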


\begin{proof}
    Recall that the Grassmann connection $\nabla^0$ is defined by
    \begin{align*}
        \nabla^0_\d a(d\alpha) b=(\d a)(d\alpha)b+a(d\alpha)(\d b)
    \end{align*}
    for $a,b\in\KQ$ and $\d\in\D_2$, and that any torsion free connection is on the form $\nabla^0+A$, where $A\in\Hom_{\KQ,\K}(\OmegaoneD\times\D_2,\OmegaoneD)$ such that $\wedge A=0$. Since $Q$ is nonpassing, $\{\d_\gamma = D_{\gamma,\gamma}:\gamma\in Q_1\}$ is a basis of $\D_2$, and any $A\in \Hom_{\KQ,\K}(\OmegaoneD\times\D_2,\OmegaoneD)$ is of the form
    \begin{align*}
        A(d\alpha,\d_\gamma)=\lambda_\alpha\delta_{\alpha\gamma}d\alpha,
    \end{align*}
    for $\alpha,\gamma\in Q_1$ and $\lambda_\alpha\in\K$, implying that $\wedge A=0$.
    
    Let us first show that if $c\in\K\angles{Q_0}$ then there exists a torsion free connection compatible with $g_c$. To this end let $\nabla$ be the (torsion free) connection defined by
    \begin{align*}
        \nabla_{\d_\gamma}d\alpha = \delta_{\alpha\gamma}d\alpha.
    \end{align*}
    This connection is compatible with $g_c$ since
    \begin{align*}
        \d_\gamma g_c&(d\alpha p,qd\beta)-g_c(\nabla_{\d_\gamma}d\alpha p,qd\beta)-g_c(d\alpha p,\nabla_{\d_\gamma}qd\beta)\\
        &=(\d_\gamma\alpha)pcq\beta+\alpha p(\d_\gamma c)q\beta+\alpha pcq(\d_\gamma\beta)
        -\delta_{\alpha\gamma}\alpha pcq\beta-\delta_{\beta\gamma}\alpha pcq\beta\\
        &=\alpha p(\d_{\gamma}c)q\beta =0
    \end{align*}
    due to the assumption that $c\in \K\angles{Q_0}$.

    Conversely, assume that $c\in\KQ$ and that $\nabla$ is a torsion free connection compatible with $g_c$. Since $Q$ is a nonpassing quiver, there exists $\lambda_{\alpha}\in\K$ such that 
    \begin{align*}
        \nabla_{\d_\gamma}d\alpha = \lambda_{\alpha}\delta_{\alpha\gamma}d\alpha.
    \end{align*}
    Write
    \begin{align*}
        c = \sum_{v\in Q_0}\lambda_v v + \sum_{r\in\Pp(Q)}\mu_r r
    \end{align*}
    for $\lambda_v,\mu_r\in \K$. If $\mu_r\alpha p r q\beta=0$ for all $p,q\in\P(Q)$, $r\in\Pp(Q)$ and $\alpha,\beta\in Q_1$ then one may choose
    \begin{align*}
        \tilde{c} = \sum_{v\in Q_0}\lambda_v v\in\K\angles{Q_0}
    \end{align*}
    giving $g_c = g_{\tilde{c}}$. Next, assume that there exists $r_0\in\Pp(Q)$, $\alpha,\beta\in Q_1$ and $p,q\in\P(Q)$ such that $\mu_{r_0}\alpha p r_0 q \beta\neq 0$. One notes that $\alpha\neq\beta$ and $\alpha,\beta\notin r_0$ since the quiver is acyclic, implying that $\alpha p (\d_\alpha c) q \beta=\alpha p (\d_\beta c) q \beta=0$.
     
    Compatibility with $g_c$ gives
    \begin{align*}
        0 &= \d_\gamma g_c(d\alpha p,qd\beta)-g_c(\nabla_{\d_\gamma}d\alpha p,qd\beta)-g_c(d\alpha p,\nabla_{\d_\gamma}qd\beta)\\
        &=(\d_\gamma\alpha)pcq\beta+\alpha p(\d_\gamma c)q\beta+\alpha pcq(\d_\gamma\beta)-\lambda_\alpha\chi_{\alpha,\alpha}(\d_\gamma)\alpha pcq\beta-\lambda_\beta\chi_{\beta,\beta}(\d_\gamma)\alpha pcq\beta\\
        &=\delta_{\alpha\gamma}\alpha pcq\beta+\alpha p(\d_\gamma c)q\beta+\delta_{\beta\gamma}\alpha pcq\beta-\delta_{\alpha\gamma}\lambda_\alpha\alpha pcq\beta-\delta_{\beta\gamma}\lambda_\beta\alpha pcq\beta\\
        &=\alpha p(\d_\gamma c)q\beta+\delta_{\alpha\gamma}(1-\lambda_\alpha)\alpha pcq\beta+\delta_{\beta\gamma}(1-\lambda_\beta)\alpha pcq\beta.    
    \end{align*}
    For $\gamma=\alpha$ and $\gamma=\beta$ one finds that $\lambda_{\alpha}=1$ and $\lambda_{\beta}=1$, respectively. Now, let $\gamma\in r_0$ (implying that $\gamma\neq\alpha,\beta$). The compatibility equation then gives
    \begin{align*}
        0=\alpha p(\d_\gamma c)q\beta 
        = \sum_{r\in\Pp(Q)}\mu_r\alpha p (\d_{\gamma} r) q\beta
        = \mu_{r_0}\alpha p r_0 q\beta
        +\hspace{-4mm}\sum_{r\in\Pp(Q),r\neq r_0}\hspace{-4mm}\mu_r\alpha p (\d_{\gamma} r) q\beta
    \end{align*}
    implying that $\mu_{r_0}\alpha p r_0 q\beta=0$ since $\P(Q)$ is a basis for $\KQ$ and the quiver is acyclic, which contradicts the assumption that $\mu_{r_0}\alpha p r_0 q\beta\neq 0$. We conclude that no such $r_0\in\Pp(Q)$ exists, which implies that $g_c=g_{\tilde{c}}$ for 
    \begin{equation*}
        \tilde{c} = \sum_{v\in Q_0}\lambda_v v\in\K\angles{Q_0},
    \end{equation*}
    completing the proof of the first part of the statement.

    Next, let us show that the compatible connection is unique if $c\in\K\angles{Q_0}$ and $g_c$ is weakly nondegenerate. 
    Since $g_c$ is weakly nondegenerate, for each $\alpha\in Q_1$ there exists $\beta\in Q_1$ and $p,q\in\P(Q)$ such that either $g_c(d\alpha p,qd\beta)\neq 0$ or $g_c(d\beta p,qd\alpha)\neq 0$. We have previously concluded that if there exists a compatible connection and $g_c(d\alpha p,qd\beta)\neq 0$ then $\lambda_{\alpha}=\lambda_{\beta}=1$ (with $g_c(d\beta p,qd\alpha)\neq 0$ giving the same conclusion), implying that the unique torsion free connection compatible with $g_c$ is given by
    \begin{equation*}
        \nabla_{\d_\gamma}d\alpha = \delta_{\alpha\gamma}d\alpha. \qedhere
    \end{equation*}
\end{proof}

\noindent
Although the above result concerns a particular class of quivers, it illustrates the fact that there is in general a non-trivial theory of Levi-Civita connections on path algebras. It remains an interesting question to completely classify quivers and bimodule forms for which Levi-Civita connections exist. 

\section{Examples}

\noindent
Let us end by presenting a few examples to illustrate some of the contents developed in this paper. To better understand the structure of the Lie algebra $\D_2$ in the examples below, let us recall the following result
\begin{proposition}[\cite{h:hochschild.finite.dim.alg}]
    Let $Q=(Q_0,Q_1)$ be an acyclic quiver and let $H^1(\KQ)$ denote the first Hochschild cohomology of $\KQ$.
    Then
    \begin{align*}
        \dim H^1(\KQ) = 1-|Q_0|+
        \sum_{\alpha\in Q_1}\nu(\alpha)
    \end{align*}
    where $\nu(\alpha)=\dim s(\alpha)\KQ t(\alpha)$.
\end{proposition}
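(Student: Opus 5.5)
The plan is to count dimensions directly, using the basis of $\Der(\KQ)$ quoted from \cite{gl:structure.hochschild.path.algebras}. Throughout I assume $Q$ is connected, which is needed for the constant $1$ in the formula; for a disconnected quiver one gets the number of connected components instead. Since $Q$ is finite and acyclic, $\P(Q)$ is finite, so $\KQ$ is finite dimensional. The only paths with $s(p)=t(p)$ are the vertices, hence $\P_A(Q)=\Pp(Q)$. Recall
\begin{align*}
    H^1(\KQ)=\Der(\KQ)/\operatorname{Inn}(\KQ),
\end{align*}
where $\operatorname{Inn}(\KQ)=\{[a,\cdot]:a\in\KQ\}$ is the space of inner derivations.

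\textbf{Step 1: $\dim\Der(\KQ)$.} The set $\{[p,\cdot]:p\in\P_A(Q)\}\cup\{D_{\alpha,p}:\alpha\in Q_1,\,p\in\Ppara\}$ is a basis of $\Der(\KQ)$. Since $\Ppara$ is exactly the set of paths from $s(\alpha)$ to $t(\alpha)$, we have $|\Ppara|=\dim s(\alpha)\KQ t(\alpha)=\nu(\alpha)$. Therefore
\begin{align*}
    \dim\Der(\KQ)=|\Pp(Q)|+\sum_{\alpha\in Q_1}\nu(\alpha).
\end{align*}

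\textbf{Step 2: $\dim\operatorname{Inn}(\KQ)$.} The map $\KQ\to\Der(\KQ)$, $a\mapsto[a,\cdot]$, is linear with image $\operatorname{Inn}(\KQ)$ and kernel $Z(\KQ)$. Hence $\dim\operatorname{Inn}(\KQ)=|\P(Q)|-\dim Z(\KQ)$, and it remains to show that $Z(\KQ)=\K\mid$.

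Write a central element as $z=\sum_v\lambda_v v+\sum_{p\in\Pp(Q)}\mu_p p$.
\begin{itemize}
    \item Comparing $uz=zu$ for $u\in Q_0$ picks out the paths with $s(p)=u\neq t(p)$. Doing this for every $u$ forces $\mu_p=0$ for all $p\in\P_A(Q)=\Pp(Q)$.
    \item For $\alpha\in Q_1$, the identity $\alpha z=z\alpha$ then reads $\lambda_{t(\alpha)}\alpha=\lambda_{s(\alpha)}\alpha$, so $\lambda_{s(\alpha)}=\lambda_{t(\alpha)}$.
    \item Connectedness of the underlying undirected graph now forces all $\lambda_v$ to be equal, so $z\in\K\mid$. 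Conversely $\mid=\sum_v v$ is central.
\end{itemize}
Thus $\dim\operatorname{Inn}(\KQ)=|\P(Q)|-1=|Q_0|+|\Pp(Q)|-1$.

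\textbf{Step 3: subtract.} This gives
\begin{align*}
    \dim H^1(\KQ)=|\Pp(Q)|+\sum_{\alpha\in Q_1}\nu(\alpha)-\paraa{|Q_0|+|\Pp(Q)|-1}=1-|Q_0|+\sum_{\alpha\in Q_1}\nu(\alpha).
\end{align*}

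As a consistency check, the inner derivations $[v,\cdot]$ lie in $\D_2$ by \eqref{eq:inner.der.in.D2}. They span a space of dimension $|Q_0|-1$, because their sum is $[\mid,\cdot]=0$. This matches the count above: $\operatorname{Inn}(\KQ)$ is spanned by the $|\Pp(Q)|$ basis elements $[p,\cdot]$ together with the span of the $[v,\cdot]$ inside $\D_2$.

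The main obstacle is the center computation, since it is the only place where connectedness and acyclicity enter beyond the cited basis. Without connectedness, $\dim Z(\KQ)$ equals the number of connected components and the constant $1$ changes accordingly.
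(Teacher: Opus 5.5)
Your proof is correct. You were right to add connectedness: the statement as printed omits it and is false without it. For two vertices and no arrows, $\KQ=\K\times\K$ has $H^1(\KQ)=0$, while the formula gives $-1$; Happel's theorem is indeed stated for connected quivers. You also correctly read the quoted Guo--Li basis with $p\in\Ppara$ rather than $p\in\P(Q)$.

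The paper gives no proof of its own and only cites Happel, whose argument is homological. One applies $\Hom_{\KQ^e}(-,\KQ)$ to the standard length-one bimodule resolution $0\to\bigoplus_{\alpha\in Q_1}P_\alpha\to\bigoplus_{v\in Q_0}P_{v,v}\to\KQ\to 0$. This produces a two-term complex $\bigoplus_{v}v\KQ v\to\bigoplus_{\alpha}s(\alpha)\KQ t(\alpha)$ whose kernel and cokernel are the Hochschild groups $Z(\KQ)$ and $H^1(\KQ)$. Using $v\KQ v=\K v$ for acyclic $Q$, its Euler characteristic gives $\dim Z(\KQ)-\dim H^1(\KQ)=|Q_0|-\sum_{\alpha}\nu(\alpha)$, and $Z(\KQ)=\K\mid$ finishes.

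Your route replaces the resolution by the basis of $\Der(\KQ)$ and rank--nullity for $a\mapsto[a,\cdot]$. In both arguments the only genuine computation is the center, exactly as you identified. Your version is elementary and stays inside the paper's $\D_2$ framework. In fact, your consistency check sharpens into a cleaner proof: the basis gives $\Der(\KQ)=\operatorname{span}\{[p,\cdot]:p\in\Pp(Q)\}\oplus\D_2$, from which $\D_2\cap\operatorname{Inn}(\KQ)=\operatorname{span}\{[v,\cdot]:v\in Q_0\}$. Hence $H^1(\KQ)\cong\D_2/\operatorname{span}\{[v,\cdot]\}$ directly, with no need to count $|\Pp(Q)|$ and cancel it. Happel's argument, by contrast, does not rely on the cited basis theorem and yields $H^i(\KQ)=0$ for $i\ge 2$ at no extra cost.
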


\noindent
For nonpassing quivers it holds that $\dim s(\alpha)\KQ t(\alpha)=1$ for all $\alpha\in Q_1$ implying that
\begin{align*}
    \dim H^1(\KQ) = 1-|Q_0|+|Q_1|.
\end{align*}
Furthermore, if $Q$ is a tree then $|Q_0|=|Q_1|+1$ implying that $\dim H^1(\KQ)=0$.

The first Hochschild cohomology of $\KQ$ describes the outer derivations modulo the inner ones. Hence, if $Q$ is a tree, e.g. $Q=A_4$
\[
\begin{tikzcd}[column sep=2em]
1 \arrow[r,"\alpha"] & 2 \arrow[r,"\beta"] & 3 \arrow[r,"\gamma"] & 4
\end{tikzcd}
\]
then there are no outer derivations, and one finds that in the case of $A_4$
\begin{align*}
    D_{\alpha,\alpha}(a) = [v_1,a]\qquad
    D_{\beta,\beta}(a) = [a,v_3+v_4]\qquad
    D_{\gamma,\gamma}(a) = [a,v_4]
\end{align*}
by using \eqref{eq:inner.der.in.D2}. In fact, as shown in \cite{gl:structure.hochschild.path.algebras}, $\dim H^1(\KQ)>0$ if and only if $Q$ contains an unoriented cycle.

\subsection{A nonpassing quiver}

\[
\begin{tikzcd}[column sep=2em, row sep=0.1em]
& 2 \arrow[dr,"\alpha_2"] & \\
1 \arrow[ur,"\alpha_1"] \arrow[dr,"\beta_1"'] && 4 \\
& 3 \arrow[ur,"\beta_2"'] &
\end{tikzcd}
\]

\noindent
\begin{align*}
    \D_2 &= \K\angles{D_{\alpha_1,\alpha_1},D_{\alpha_2,\alpha_2},D_{\beta_{1},\beta_1},D_{\beta_2,\beta_2}}\\
    \OmegaoneD &= \angles{d\alpha_1}\oplus \angles{d\alpha_2}\oplus\angles{d\beta_1}\oplus\angles{d\beta_2}\\
    &=\K\angles{d\alpha_1,(d\alpha_1)\alpha_2}\oplus
    \K\angles{d\alpha_2,\alpha_1 d\alpha_2}\oplus
    \K\angles{d\beta_1,(d\beta_1)\beta_2}\oplus
    \K\angles{d\beta_2,\beta_1 d\beta_2}\\
    \Omega_{\D_2}^2 &= \K\angles{d\alpha_1 d\alpha_2,d\beta_1 d\beta_2}
\end{align*}

\noindent 
Since $\dim\D_2=4$ one knows that $\Omega_{\D_2}^k=0$ for $k>4$; however, since the longest path in the quiver is of length 2 it follows that $\Omega_{\D_2}^k=0$ for $k>2$, and we note that $\dim\Omega_{\D_2}^1 = 8$ and $\dim\Omega_{\D_2}^2=2$.

Furthermore, the above quiver is a nonpassing quiver, giving $\dim H^1(\KQ)=1-4+4=1$. Using \eqref{eq:inner.der.in.D2} one finds that 
\begin{align*}
    &D_{\alpha_2,\alpha_2}(a) = D_{\alpha_1,\alpha_1}(a)+[v_2,a]\\
    &D_{\beta_1,\beta_1}(a) = -D_{\alpha_1,\alpha_1}(a)+[v_1,a]\\
    &D_{\beta_2,\beta_2}(a) = -D_{\alpha_1,\alpha_1}(a)+[v_1+v_3,a]
\end{align*}
for $a\in\KQ$, giving $[D_{\alpha_1,\alpha_1}]=[D_{\alpha_2,\alpha_2}]$ and $[D_{\beta_1,\beta_1}]=[D_{\beta_2,\beta_2}]=-[D_{\alpha_1,\alpha_1}]$ in $H^1(\KQ)$. Moreover, $[D_{\alpha_1,\alpha_1}]\neq 0$ since assuming that $D_{\alpha_1,\alpha_1}(a)=[b,a]$ with
\begin{align*}
    b = \sum_{i=1}^4 c_iv_i + \lambda_1\alpha_1+\lambda_2\alpha_2 + \mu_1\beta_1 + \mu_2\beta_2
    +\tau_1\alpha_1\alpha_2 + \tau_2\beta_1\beta_2
\end{align*}
for $c_i,\lambda_1,\lambda_2,\mu_1,\mu_2,\tau_1,\tau_2\in\K$, gives 
\begin{align*}
    &\alpha_1\alpha_2 = D_{\alpha_1,\alpha_1}(\alpha_1\alpha_2) = [b,\alpha_1\alpha_2]
    = (c_1-c_4)\alpha_1\alpha_2\\
    &0 = D_{\alpha_1,\alpha_1}(\beta_1\beta_2) = [b,\beta_1\beta_2] = (c_1-c_4)\beta_1\beta_2
\end{align*}
showing that $D_{\alpha_1,\alpha_1}$ is not an inner derivation and, hence, $[D_{\alpha_1,\alpha_1}]\neq 0$ in $H^1(\KQ)$. It follows that $D_{\alpha_2,\alpha_2},D_{\beta_1,\beta_1},D_{\beta_2,\beta_2}$ are outer derivations as well. 

\subsection{An acyclic quiver with a bypass}

\[
\begin{tikzcd}[column sep=1.5em, row sep=0.5em]
& 2 \arrow[dr,"\beta"] & \\
1 \arrow[ur,"\alpha"] \arrow[rr,"\gamma"'] & & 3 
\end{tikzcd}
\]

\begin{align*}
    \D_2 &= \K\angles{D_{\alpha,\alpha},D_{\beta,\beta},D_{\gamma,\gamma},D_{\gamma,\alpha\beta}}\\
    \OmegaoneD &= \angles{d\alpha}\oplus\angles{d\beta}\oplus\angles{d\gamma}\\
    &= \K\angles{d\alpha,(d\alpha)\beta}\oplus\K\angles{d\beta,\alpha d\beta}\oplus
    \K\angles{d\gamma}\\
    \Omega_{\D_2}^2 &= \K\angles{d\alpha d\beta}.
\end{align*}

\noindent
In this example we note that $\gamma$ is a bypass and that $\dim\Omega_{\D_2}^1 = 5$ and $\dim\Omega_{\D_2}^2=1$, as well as $\Omega_{\D_2}^k=0$ for $k>2$ since the longest path in the quiver has length 2. One finds that $\dim H^1(\KQ) = 1-3+1+1+2 = 2$ and one can easily check that
\begin{align*}
    0\neq[D_{\alpha,\alpha}]=[D_{\beta,\beta}]=-[D_{\gamma,\gamma}]
\end{align*}
in $H^1(\KQ)$, and that $[D_{\alpha,\alpha}]$ and $[D_{\gamma,\alpha\beta}]\neq 0$ are linearly independent. Hence,
\begin{align*}
    H^1(\KQ) = \K\angles{[D_{\alpha,\alpha}],[D_{\gamma,\alpha\beta}]},
\end{align*}
and it follows from the above that $D_{\alpha,\alpha},D_{\beta,\beta},D_{\gamma,\gamma},D_{\gamma,\alpha\beta}$ are outer derivations.

\subsection{A cyclic separating quiver}

\[
\begin{tikzcd}
1
\arrow[out=140, in=220, loop, looseness=5, "\alpha"']
\arrow[out=40, in=320, loop, looseness=5, "\beta"]
\end{tikzcd}
\]

\noindent
The path algebra of the above quiver is the free noncommutative algebra on the generators $\alpha,\beta$. A basis is given by words in $\alpha,\beta$ and $s(p)=t(p)=v_1=\mid$ for all $p\in\P(Q)$. Hence
\begin{align*}
    \D_2 &= \K\angles{D_{\alpha,a},D_{\beta,a}:a\in\KQ}\\
    \Omega_{\D_2}^1 &= \angles{d\alpha}\oplus\angles{d\beta}\\
    &=\bigg\{\sum_{i}a_i(d\alpha)b_i:a_i,b_i\in\KQ\bigg\}\oplus
    \bigg\{\sum_{i}a_i(d\beta)b_i:a_i,b_i\in\KQ\bigg\}.
\end{align*}
It follows from  Corollary~\ref{cor:equiv.sep.proj.cycles} that $Q$ is a separating quiver implying that $\angles{d\alpha},\angles{d\beta}$ are projective modules. In fact, let us show that $\angles{d\alpha},\angles{d\beta}$ are free $\KQ$-bimodules. 
Namely, since $s(\alpha)\KQ t(\alpha)=\KQ$ the fact that $Q$ is a separating quiver implies that 
\begin{align*}
    \tilde{\theta}_{\alpha}:\KQ\otimes\KQ&\to\angles{d\alpha}\\
    a\otimes b &\mapsto a(d\alpha)b
\end{align*}
is injective, which is equivalent to $d\alpha$ being a basis of $\angles{d\alpha}$ as a left $\KQ^e$-module. The same argument applies to $\angles{d\beta}$.

Let us now show that $\Omega_{\D_2}^n\neq 0$ for all $n>0$. To this end, let
\begin{align*}
    \omega = d\alpha d\alpha \cdots d\alpha\in\Omega_{\D_2}^n
\end{align*}
and let $w_k=\alpha^{k}\beta^{n-k}$ for $1\leq k\leq n$. It follows that
\begin{align*}
    \omega(D_{\alpha,w_1},D_{\alpha,w_2},\ldots,D_{\alpha,w_n})
    =\sum_{\sigma\in S_n} w_{\sigma(1)}w_{\sigma(2)}\cdots w_{\sigma(n)}
\end{align*}
and we note that all paths in the sum are distinct, implying that $\omega\neq 0$ and consequently that $\Omega_{\D_2}^n\neq 0$. 

\section{Summary}

\noindent
In this paper, we have developed a derivation based differential geometry for quiver path algebras, starting from a distinguished Lie algebra $\D_2$ of derivations associated with substitutions of arrows by parallel paths. The resulting differential calculus reflects the combinatorial structure of the underlying quiver in a direct way. For instance, for acyclic quivers the dimension of the calculus is determined by the length of the longest path, and its cohomology is trivial in all positive degrees.

A central result is the characterization of those quivers for which the bimodule of differential forms is projective as a bimodule over the path algebra. Introducing the notion of a \emph{separating quiver}, we have shown that $\Omega^1_{\D_2}$ is projective
if and only if every edge is either not part of any cycle or part of at least two distinct cycles. In this case,
\begin{align*}
    \Omega^1_{\D_2}
    \simeq
    \bigoplus_{\alpha\in Q_1} P_\alpha,
\end{align*}
giving an explicit decomposition into elementary projective bimodules. Thus, a purely graph theoretic property of the quiver determines an essential geometric property of the associated differential calculus.

This projective description also makes it possible to study bimodule endomorphisms, connections and metric structures. We have constructed a canonical torsion-free Grassmann connection, analyzed bimodule forms and their degeneracy properties, and shown that nontrivial acyclic quivers admit no invertible bimodule forms. Finally, for nonpassing quivers we obtain an existence criterion for Levi-Civita connections and, under a weak nondegeneracy assumption, a unique compatible torsion-free connection.

Taken together, these results show that path algebras support a nontrivial derivation based noncommutative Riemannian geometry in which algebraic, geometric and combinatorial structures are closely intertwined. They also suggest several natural directions for further work, including a more complete classification of Levi-Civita structures and extensions of the construction to related classes of algebras, such as path algebras with relations as well as Leavitt path algebras.

\bibliographystyle{alpha}
\bibliography{references}

\end{document}